\setlist{itemsep=1mm} 
\newtheorem{theorem}{Theorem}[section]
\newtheorem{proposition}[theorem]{Proposition}
\newtheorem{lemma}[theorem]{Lemma}
\newtheorem{corollary}[theorem]{Corollary}
\newtheorem{conjecture}[theorem]{Conjecture}
\theoremstyle{definition}
\newtheorem{definition}[theorem]{Definition}
\newtheorem{remark}[theorem]{Remark}
\newtheorem{example}[theorem]{Example}
\numberwithin{equation}{section}
\DeclareMathOperator{\diag}{diag}
\DeclareMathOperator{\NP}{NP}
\DeclareMathOperator{\PU}{PU}
\DeclareMathOperator{\Hom}{Hom}
\DeclareMathOperator{\supp}{supp}
\DeclareMathOperator{\conv}{conv}
\DeclareMathOperator{\Ker}{Ker}
\DeclareMathOperator{\vol}{vol}
\DeclareMathOperator{\rank}{rank}
\DeclareMathOperator{\init}{init}
\renewcommand{\div}{\operatorname{div}}
\newcommand{\FS}{\mathrm{FS}}
\renewcommand{\and}{\quad \text{and} \quad}
\begin{document}

\title[K\"ahler-Einstein toric submanifolds]{K\"ahler-Einstein toric
  submanifolds of the projective space}

\author[Di Scala]{Antonio J. Di Scala}
\address{\hspace*{-6.3mm}
  Dipartimento di Scienze Matematiche, Politecnico di Torino, 10129 Torino, Italy \vspace*{-2.8mm}}
\address{\hspace*{-6.3mm} {\it Email address:} {\tt antonio.discala@polito.it}}

\author[Sombra]{Mart\'in~Sombra}
\address{\hspace*{-6.3mm} Institució Catalana de Recerca i Estudis Avançats, 08010 Barcelona, Spain \vspace*{-2.8mm}}
\address{\hspace*{-6.3mm}  Departament de Matem\`atiques i
  Inform\`atica, Universitat de Barcelona,  08007
  Bar\-ce\-lo\-na, Spain \vspace*{-2.8mm}}
\address{\hspace*{-6.3mm} Centre de Recerca Matem\`atica, 08193 Bellaterra, Spain \vspace*{-2.8mm}}
\address{\hspace*{-6.3mm} {\it Email address:} {\tt martin.sombra@icrea.cat}}

\date{\today} \subjclass[2020]{Primary 53C55; Secondary 14M25, 32Q20, 53C24}
\keywords{K\"ahler-Einstein metrics, projectively induced metrics,
  toric Fano manifolds, lattice polytopes}

\begin{abstract}
  We show that the K\"ahler-Einstein metrics on the four families of
  examples of symmetric toric Fano manifolds provided by Batyrev and
  Selivanova cannot be realized as metrics induced by immersions into
  projective spaces equipped with Fubini-Study metrics.  We obtain a
  similar conclusion for the non-symmetric examples discovered by Nill
  and Paffenholz. A consequence is that a centrally symmetric toric
  Fano manifold admits a K\"ahler-Einstein metric induced by a
  projective immersion if and only if it is a product of projective
  lines.  These results provide evidence for a broader conjecture
  characterizing which K\"ahler-Einstein metrics can be induced by
  projective immersions.
\end{abstract}

\maketitle

\thispagestyle{empty}

\vspace{-6mm}

\section{Introduction}
\label{sec:introduction}

It is well-known that the Fubini-Study metric $g_{\FS}$ on a complex
projective space $\mathbb{P}^s$ is Einstein. Less known are the
compact complex manifolds $X$ having an immersion
$\varphi\colon X\to \mathbb{P}^s$ such that the pullback K\"ahler
metric $g =\varphi^{*}g_{\FS}$ is Einstein.
A result of Hulin~\cite{Hulin:kempe} shows that in this case the
Einstein constant of the metric is positive, which implies that $X$ is
a Fano manifold.

By the theorem of Chen, Donaldson and Sun proving the
Yau-Tian-Donaldson conjecture, the existence of a K\"ahler-Einstein
metric $g$ on a Fano manifold~$X$ is characterized by the
algebro-geometric notion of
K-poly\-sta\-bil\-i\-ty~\cite{CDS:kemfm}. When this condition is
verified, it is natural to ask if the metric $g$ is \emph{projectively
  induced} in the sense that there exists an immersion
$\varphi \colon X \to \mathbb{P}^s$ such that
\begin{equation}
  \label{eq:36}
g = \varphi^* g_{\FS}. 
\end{equation}

In \cite{Calabi} Calabi showed that for a K\"ahler manifold $(X,g)$
there are strong restrictions for the existence of a projective
immersion inducing the metric, and so one expects \eqref{eq:36} to
occur only in special situations.
Starting with the work of Smyth \cite{S:dgch} and Chern
\cite{Chern:ehkmchc} on hypersurfaces, all known examples of
K\"ahler-Einstein submanifolds of a projective space turned out to be
homogeneous \cite{Hano:ecicps,Takeuchi:hkscps,Tsukada:EKsccsf}.  This
has motivated the following folklore conjecture.

\begin{conjecture}
  \label{conj:1}
  Let $g$ be a projectively induced K\"ahler-Einstein metric on a
  compact manifold~$X$.  Then $(X,g)$ is a homogeneous space.
\end{conjecture}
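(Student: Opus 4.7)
The plan is to prove the conjecture by demonstrating that the isometry group of $(X,g)$ acts transitively on $X$, and then invoking the classification of compact homogeneous K\"ahler-Einstein submanifolds of projective space in the spirit of Hano, Takeuchi, and Tsukada. By Hulin's result, the Einstein constant is positive so $X$ is Fano; in particular $g$ is real analytic and hence so is $\varphi$.

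The principal lever is Calabi's rigidity theorem. After reducing to a full immersion $\varphi \colon X \to \mathbb{P}^s$ by restricting to the smallest linear subspace containing the image, any biholomorphic isometry $\sigma$ of $(X,g)$ satisfies $\varphi\circ\sigma = U\circ\varphi$ for a unique $U \in \PU(s+1)$, since $\varphi$ and $\varphi\circ\sigma$ induce the same metric. Hence the compact isometry group $K = \mathrm{Isom}(X,g)$ embeds into $\PU(s+1)$, its complexification $K^{\mathbb{C}}$ acts on $\mathbb{P}^{s}$, and $\varphi(X)$ is $K^{\mathbb{C}}$-stable. The task reduces to proving that $K$ acts transitively on $X$.

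To that end I would exploit the K\"ahler-Einstein equation to produce algebraic constraints on $\varphi(X)$. Locally $\omega = \tfrac{i}{2}\partial\bar\partial \log \|f\|^{2}$ for a holomorphic lift $f$ of $\varphi$, so $\mathrm{Ric}(\omega) = \lambda\omega$ becomes
\[
-\partial\bar\partial \log \det(g_{i\bar\jmath}) \,=\, \lambda\,\partial\bar\partial \log \|f\|^{2}.
\]
Thus $\det(g_{i\bar\jmath})\cdot\|f\|^{2\lambda}$ is locally the modulus squared of a holomorphic function, yielding a real-analytic identity on $\varphi(X)$ with strong algebraic content via Calabi's diastasis expansion around each point. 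Differentiating this identity along $K^{\mathbb{C}}$-orbits, and invoking Matsushima-type reductivity of the algebra of holomorphic vector fields on a K\"ahler-Einstein Fano manifold, I would aim to show that $K^{\mathbb{C}}$ has an open orbit on $X$; compactness of $X$ and real analyticity of $\varphi$ then upgrade the open orbit to all of $X$.

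The main obstacle is precisely this transitivity step. Calabi's rigidity ensures every isometry of $(X,g)$ is visible inside $\PU(s+1)$, but it does not \emph{produce} isometries, which must instead be extracted from the K\"ahler-Einstein equation together with the special shape of the potential $\log\|f\|^{2}$. A possible auxiliary route is to analyze the second fundamental form of $\varphi(X)\subset\mathbb{P}^{s}$ via Gauss-Codazzi, transferring the Einstein condition into tight constraints on the shape operator and the curvature of the induced normal bundle, and then to compare with Tsukada-type parallel immersion criteria. The difficulty of ruling out highly non-generic embeddings whose naive isometry group is small is what keeps the conjecture open, and it is exactly this difficulty that motivates the present paper's strategy of refuting projective induceability on explicit families of symmetric toric Fano manifolds.
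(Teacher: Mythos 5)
The statement you are asked about is Conjecture~\ref{conj:1}, which the paper explicitly presents as an \emph{open} folklore conjecture: the authors do not prove it, and the entire point of the paper is to supply \emph{evidence} for it by refuting projective induceability on explicit families of toric Fano manifolds. So there is no proof in the paper to match your proposal against, and your proposal does not close the conjecture either --- as you yourself concede in the final paragraph, the transitivity step is missing, and that step \emph{is} the conjecture.

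Concretely, the gap is this. Calabi's rigidity theorem gives you an injection of $\mathrm{Isom}(X,g)$ into $\PU(s+1)$, but it is purely a \emph{rigidity} statement: it transports isometries you already have, it manufactures none. Nothing in your outline produces a single nontrivial isometry of $(X,g)$, let alone a transitive group of them. The identity $\det(g_{i\bar\jmath})\,\|f\|^{2\lambda}=|h|^{2}$ for a local holomorphic $h$ is just a restatement of the Einstein equation and holds for \emph{any} K\"ahler--Einstein metric with a global potential of Fubini--Study type; it carries no information about symmetry. The appeal to Matsushima reductivity is also misdirected: Matsushima's theorem concerns the full algebra of holomorphic vector fields of $X$ (which is reductive when $g$ is K\"ahler--Einstein), but it says nothing about whether the subalgebra coming from $K^{\mathbb{C}}\subset\PGL(s+1)$ has an open orbit --- for a toric Fano manifold that is not homogeneous, the holomorphic automorphism group can be as small as the torus extended by a finite group, and the isometry group smaller still. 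Indeed, the paper's own results illustrate why your route cannot work as stated: the manifolds $V_k$, $S_{m,k}$, $X_{m,k}$, $W_m$ all carry K\"ahler--Einstein metrics and are not homogeneous, so any argument deriving homogeneity from the Einstein equation alone (without using projective induceability in an essential, quantitative way) would be false. The paper's actual strategy is the contrapositive in special cases: encode a hypothetical toric full immersion by a Laurent polynomial $p_\varphi$, translate the Einstein equation into $\mu(p_\varphi)=p_\varphi^{n}$, and derive combinatorial contradictions from the Newton polytope and initial-part factorizations. That is a disproof of existence on specific non-homogeneous examples, not a proof of the conjecture, and your proposal should be read as an (acknowledged) research plan rather than a proof.
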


The study of K\"ahler-Einstein metrics on compact toric manifolds has
experienced an important development in the recent years, see for
instance \cite{BB:rmaekrstlfv} and the references therein.  Their
classification in dimensions up to $4$ was obtained by Batyrev and
Selivanova \cite{BS:ekmstfm} and then in dimensions up to $7$
combining the classification of unimodular Fano polytopes by Obro
\cite{Obro07} with the combinatorial characterization for the
existence of a K\"ahler-Einstein metric on the associated Fano
manifolds by Wang and Zhu \cite{WZ:krstmpfcc}.

Using these results Arezzo, Loi and Zuddas \cite{ArezzoLoiZuddas_hbm}
and Manno and Salis~\cite{MS:TKEmicps} proved that K\"ahler-Einstein
toric manifolds in dimensions up to $6$ are not projectively induced
unless they are product of projective spaces.  Unfortunately a
complete classification of K\"ahler-Einstein compact toric manifolds is not
available yet, and so this approach to the toric case of the
conjecture cannot be extended to the higher dimensional situation.

In this paper we develop algebraic tools to handle such higher
dimensional situations. To explain our results, for an integer
$n\ge 1$ let $X$ be an $n$-dimensional toric Fano manifold and
$\Sigma$ its associated fan on a vector space
$N_{\mathbb{R}}\simeq \mathbb{R}^{n}$.  Then $X$ is \emph{symmetric}
if the group of automorphisms of $\Sigma$ fixes only the origin of
$N_{\mathbb{R}}$. This toric Fano manifold is \emph{centrally
  symmetric} if the reflection on $N_{\mathbb{R}}$ with respect to the
origin is an automorphism of~$\Sigma$.  Clearly a centrally symmetric
toric Fano manifold is symmetric, but the converse does not always
hold.

In \cite{BS:ekmstfm} Batyrev and Selivanova showed that every
symmetric toric Fano manifold admits a K\"ahler-Einstein metric, and
they presented four families of such toric manifolds including many of
the previously known examples.  These families were the del Pezzo
toric manifolds $V_{k}$ of Voskresenskij and Klyachko~\cite{VK:tfvrs},
and the symmetric toric Fano manifolds $S_{m,k}$, $X_{m,k}$ and
$W_{m}$ respectively introduced by Sakane \cite{Sakane}, Nakagawa
\cite{Nakagawa} and themselves, see \cite[Section 4]{BS:ekmstfm} or
Section~\ref{sec:large-dimensions} for details.

Here is our main result.

\begin{theorem}
  \label{thm:1}
  Let $X$ be a symmetric toric Fano manifold of type $V_{k}$,
  $S_{m,k}$, $X_{m,k}$ or~$W_{m}$, and $g$ a K\"ahler-Einstein
  metric on it. Then $g$ is not projectively induced.
\end{theorem}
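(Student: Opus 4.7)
The plan is to obstruct projective inducedness via Calabi's criterion applied at a torus-fixed point. Let $\Delta \subset M_{\mathbb{R}}$ denote the moment polytope of $X$ with respect to $K_{X}^{-1}$, and fix a vertex $v \in \Delta$ corresponding to a torus-fixed point $p \in X$. In affine toric coordinates $(z_1, \dots, z_n)$ centered at $p$, a basis of sections of $K_X^{-1}$ is indexed by $\{z^{m-v}\}_{m \in \Delta \cap M}$. By Calabi~\cite{Calabi}, the metric $g$ is projectively induced if and only if there exists a local K\"ahler potential $\phi$ for $g$ and non-negative constants $(c_m)_{m \in \Delta \cap M}$ with
\[
e^{\phi(z)} = \sum_{m \in \Delta \cap M} c_m \, |z|^{2(m-v)};
\]
equivalently, the Taylor expansion of $e^{\phi}$ in the real variables $|z_i|^2$ around $p$ has non-negative coefficients supported on $\Delta \cap M$. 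This is the framework underlying~\cite{ArezzoLoiZuddas_hbm,MS:TKEmicps}.

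The next step is to exploit the symmetry hypothesis. By Bando--Mabuchi uniqueness of the KE metric up to automorphism, combined with the Batyrev--Selivanova construction of invariant KE metrics on symmetric toric Fano manifolds, the potential $\phi$ is invariant under the automorphism group $\Gamma$ of the fan $\Sigma$. Consequently the hypothetical coefficients $c_m$ must be constant on $\Gamma$-orbits, reducing the positivity question to a much smaller set of unknowns. Using the real Monge--Amp\`ere equation satisfied by the symplectic potential on $\Delta$, one can derive explicit linear relations among the low-order $c_m$ by matching coefficients at $p$. The target is to locate, for each family $V_k, S_{m,k}, X_{m,k}, W_m$, a specific lattice point $m_0 \in \Delta$ for which the Monge--Amp\`ere system forces $c_{m_0} < 0$. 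A guiding heuristic is that for $X = (\mathbb{P}^1)^n$ the Segre embedding realizes all of these coefficients as non-negative, so the obstruction should vanish precisely on the product locus and be strictly negative elsewhere.

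The principal obstacle is that the K\"ahler--Einstein potential $\phi$ is only implicitly defined by the Monge--Amp\`ere equation, so the $c_m$ are not available in closed form. The innovation must be an algebraic argument that extracts the sign of $c_{m_0}$ from purely combinatorial data of $\Sigma$, bypassing the transcendental potential. A second difficulty is the uniform treatment of infinite families: for the del Pezzo family $V_k$ the large symmetric group action should reduce matters to a one-parameter combinatorial inequality, while for $S_{m,k}$ and $X_{m,k}$ a two-parameter inequality in $(m,k)$ is to be expected, and $W_m$ may require an additional reduction, for instance by restricting to a $\Gamma$-invariant subvariety whose polytope belongs to an already-handled family.
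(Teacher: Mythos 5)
Your proposal is a plan rather than a proof, and the step you yourself flag as ``the innovation'' --- extracting the sign of a hypothetical coefficient $c_{m_0}$ from combinatorial data, bypassing the transcendental K\"ahler--Einstein potential --- is exactly the part that is missing and that the paper replaces by a completely different mechanism. The paper never attempts to show that some coefficient is negative. Instead, after reducing (via Calabi rigidity) to a torus-equivariant full immersion encoded by a Laurent polynomial $p$ with positive coefficients and unimodular support, it converts the Einstein condition into the polynomial identity $\mu(p)=p^{n}$ for an algebraic Monge--Amp\`ere operator $\mu$, relaxes this to the divisibility condition $\mu(p)\mid p^{\kappa}$ (GEC), and proves two structural facts: a formula for the Newton polytope of $\mu(p)$ and a factorization of its initial parts, $\init_\tau(\mu(p))=\mu(\init_\tau(p))\cdot p|_{F'_\tau}$. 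The latter makes GEC hereditary under restriction to faces of the polytope, which reduces everything to explicit two-dimensional faces; there the obstruction is the invariance of the ratio $\ell(E')/\ell(E)$ over all edges (violated by the trapezoids occurring in $S_{m,k}$, $X_1$, and $W_m$ for $m\ge 2$) together with a direct computation ruling out the reflexive hexagon (which occurs as a face for $V_k$, $X_{m,k}$, $W_1$, and $X_2$). Nothing in your outline supplies a substitute for this reduction-to-faces step, and your hope of ``matching coefficients at $p$'' against the real Monge--Amp\`ere equation runs straight into the problem you acknowledge: the $c_m$ are not determined in closed form, and symmetry alone does not pin down their signs.

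There are also two smaller but genuine inaccuracies. First, a projectively induced K\"ahler--Einstein metric is induced by an immersion associated to some ample line bundle whose polytope is $\lambda^{-1}\Delta_{-K_X}$ for the (a priori unknown) Einstein constant $\lambda$, not necessarily $\Delta_{-K_X}$ itself; the normalization to $\lambda=1$ is delicate here (the paper devotes Proposition~\ref{prop:1} and Remark~\ref{rem:7} to it, because the rescaled polynomial need not have positive coefficients). Second, your claim that the coefficients $c_m$ are constant on orbits of the fan automorphism group requires the immersion itself, not just the metric, to be equivariant; this needs the Calabi rigidity argument of Lemma~\ref{lem:1}, which you invoke only implicitly. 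Neither issue is fatal, but both would need to be addressed, and even then the central obstruction argument remains to be found.
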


By a result of Voskresenskij and Klyachko \cite{VK:tfvrs}, every
centrally symmetric toric Fano manifold is a product of projective
lines and del Pezzo toric manifolds. Hence we obtain the next
consequence.

\begin{corollary}
  \label{cor:4}
  Let $X$ be a centrally symmetric toric Fano manifold. Then $X$
  admits a projectively induced K\"ahler-Einstein metric if and only
  if it is a product of projective~lines.
\end{corollary}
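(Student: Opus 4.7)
The plan is to reduce Corollary~\ref{cor:4} to Theorem~\ref{thm:1} via the Voskresenskij-Klyachko decomposition together with the standard compatibility of projectively induced K\"ahler-Einstein metrics with direct products. The ``if'' direction is immediate: when $X = (\mathbb{P}^1)^a$, the $a$-fold product of the Fubini-Study metric on $\mathbb{P}^1$ is a K\"ahler-Einstein metric on $X$, and it is induced by the Segre embedding $(\mathbb{P}^1)^a\hookrightarrow \mathbb{P}^{2^a-1}$.

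For the converse, I would start by supposing that a centrally symmetric toric Fano manifold $X$ carries a projectively induced K\"ahler-Einstein metric $g = \varphi^*g_{\FS}$ for some $\varphi\colon X\to\mathbb{P}^s$. By the theorem of Voskresenskij and Klyachko recalled above, I may write
$$X \;\cong\; (\mathbb{P}^1)^a \times V_{k_1} \times \cdots \times V_{k_r},$$
and I would assume for contradiction that $r\geq 1$. By Bando-Mabuchi uniqueness, $g$ coincides, up to pullback by some $\psi \in \mathrm{Aut}_0(X)$, with the product of the K\"ahler-Einstein metrics $g_0,g_1,\ldots,g_r$ on the individual factors; after replacing $\varphi$ by $\varphi\circ\psi^{-1}$ I may therefore assume that $\varphi$ itself induces this product metric.

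The final step is a slicing argument: fixing a base point $p$ in the product of the remaining factors, the restriction $\varphi_p\colon V_{k_1}\to \mathbb{P}^s$ is again an immersion, since $d\varphi$ is injective on every tangent subspace of the product, and its pulled-back metric is precisely $g_1$ by the product structure of $g$. Thus $g_1$ would be a projectively induced K\"ahler-Einstein metric on the del Pezzo toric manifold $V_{k_1}$, contradicting Theorem~\ref{thm:1}. Hence $r=0$, which yields $X \cong (\mathbb{P}^1)^a$ as desired.

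The argument is essentially formal once Theorem~\ref{thm:1} is in hand, and I do not anticipate any serious obstacle. The only piece of machinery beyond what is already invoked in the excerpt is Bando-Mabuchi uniqueness, used to guarantee that the K\"ahler-Einstein metric on $X$ agrees, up to automorphism, with the product of the K\"ahler-Einstein metrics on the factors; without this step the slicing would apply only to the distinguished toric K\"ahler-Einstein metric rather than to an arbitrary K\"ahler-Einstein metric, and the hypothesis of the corollary would not be strong enough to reach a contradiction through Theorem~\ref{thm:1} alone.
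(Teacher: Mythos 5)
Your proof is correct, but it takes a genuinely different route from the paper's. The paper never leaves its algebraic framework: since $\Delta_{-K_X}$ is the product of the anticanonical polytopes of the factors in the Voskresenskij--Klyachko decomposition, the anticanonical polytope of a del Pezzo factor $V_k$ sits inside $\Delta_{-K_X}$ as a face, and Corollary~\ref{cor:9} (that is, Proposition~\ref{prop:1} combined with the heredity of GEC under taking initial parts along faces, Proposition~\ref{prop:5}) produces from a projectively induced K\"ahler--Einstein form a Laurent polynomial with unimodular support and Newton polytope $\Delta_{-K_{V_k}}$ satisfying GEC, which Theorem~\ref{thm:3} excludes. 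You instead argue at the level of metrics: Bando--Mabuchi uniqueness identifies the induced metric, up to an automorphism and a rescaling matching the Einstein constants, with the product of the factor K\"ahler--Einstein metrics (which exist on each $V_{k_i}$ by Batyrev--Selivanova), and slicing the immersion along $V_{k_1}\times\{p\}$ transports the problem to the factor, where Theorem~\ref{thm:1} applies. Both arguments are sound. The paper's route stays self-contained within the $\mu$-operator machinery and needs no uniqueness theorem --- its only inputs are $\mu(p)=p^n$ and the multiplicativity of initial parts --- and it in fact proves the stronger GEC version of the statement; your route imports Bando--Mabuchi but is a clean geometric reduction to factors that would apply verbatim to any product decomposition of a Fano manifold, toric or not. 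If you write it up, do make explicit the rescaling step (an arbitrary projectively induced K\"ahler--Einstein metric has some constant $\lambda>0$, and Bando--Mabuchi is applied in the class $c_1(X)$ after multiplying by $\lambda$), since without it the comparison with the product metric is not literally an application of the uniqueness theorem.
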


In \cite{NP:eketfmansrp} Nill and Paffenholz proved that there exist
K\"ahler-Einstein toric Fano manifolds that are not symmetric by
exhibiting two examples in dimensions~$7$ and~$8$. As another
application of our techniques we show that these metrics are neither
projectively induced (Theorem \ref{thm:5}), thus providing further
evidence for Conjecture~\ref{conj:1}.

\bigskip
We next sketch our strategy. We work with the
equivalent point of view of forms instead of that of metrics, and so
we consider a pair $(X,\omega)$ made of an $n$-dimensional toric Fano
manifold and a K\"ahler-Einstein form induced by an immersion
$\varphi\colon X\to \mathbb{P}^{s}$ from the Fubini-Study form of this
projective space.

Let $M \simeq \mathbb{Z}^{n}$ be the group of characters of the
complex torus acting on $X$. Applying Calabi's rigidity theorem we
reduce to the situation where $\varphi$ is equivariant with respect to
the usual toric structure on $\mathbb{P}^{s}$ (Lemmas \ref{lem:1} and
\ref{lem:15}). Hence this immersion can be represented through a
family of monomials that we use to construct a Laurent polynomial
\begin{math} 
  p_{\varphi}\in \mathbb{R}[M].
\end{math}

We then define an algebraic differential operator
\begin{math}
\mu \colon \mathbb{C}[M]\rightarrow \mathbb{C}[M]  
\end{math}
allowing to translate the Einstein condition for $\omega$ into the
equation
\begin{displaymath}
  \mu(p_{\varphi})=p_{\varphi}^{n},
\end{displaymath}
similar to that considered in \cite{ArezzoLoiZuddas_hbm,MS:TKEmicps}
(Proposition \ref{prop:1}). To study it we consider the more flexible
requirement
\begin{math}
  \mu(p_{\varphi})\mid p_{\varphi}^{\kappa}
\end{math}
for an integer $\kappa\ge 0$, that we call the \emph{generalized
  Einstein condition (GEC)}.

Our key technical results are an explicit description of the Newton
polytope of $\mu(p_{\varphi})$ and a factorization formula for the
initial parts of this Monge-Ampère polynomial~(Theorems~\ref{thm:4}
and \ref{thm:2}). The latter can be interpreted as an algebraic
version the adjunction formula on $X$ for an invariant hypersurface,
and implies that GEC is hereditary on the invariant submanifolds of
$X$ (Proposition \ref{prop:5})

We then study some specific compact toric surfaces to show that GEC
cannot be satisfied in these cases (Corollaries \ref{cor:2} and
\ref{cor:3}).  Since each of the considered higher dimensional toric
manifolds have at least one of these toric surfaces as an invariant
submanifold, we deduce that GEC can neither be satisfied on them, and
in particular these manifolds do not admit a projectively induced
K\"ahler-Einstein metric~(Theorems~\ref{thm:3} and \ref{thm:5}).

The paper is organized as follows. Section \ref{sec:preliminaries}
contains the preliminary notions and facts from differential and toric
geometries.  In Section \ref{sec:algebraic-potentials} we give a
combinatorial formula for the polynomial Monge-Amp\'ere operator and
study its properties. Section~\ref{sec:applications} contains the
proof of our main results.

\section{Preliminaries}
\label{sec:preliminaries}

In this section we recall the basic constructions and properties of
complex toric manifolds and start to study the toric K\"ahler forms
that are induced by projective immersions. We show that each of these
K\"ahler forms can be encoded by a Laurent polynomial, which allows to
translate the corresponding Einstein condition into an algebraic
equation.

\subsection{Toric manifolds}
\label{sec:toric-manifolds}

For an integer $n\ge1$ we denote by  $\mathbb{T}$  an $n$-dimensional
(complex) torus, that is a complex group isomorphic to
$(\mathbb{C}^{\times})^{n}$, and by
$\mathbb{S}\simeq (S^{1})^{n}$ its compact subtorus.

\begin{definition}[{\cite[Section~2]{Donaldson:kgtmsomlsg}}]
  \label{def:7}
  A (complex) manifold~$X$ is \emph{toric} if it contains $\mathbb{T}$
  as a dense open subset and is equipped with an action of
  $\mathbb{T}$ extending the action of this torus onto itself by
  translations.  A K\"ahler form $\omega$ on $X$ is \emph{toric} if it
  is invariant under the action of $\mathbb{S}$, in which case the
  pair $(X,\omega)$ is called a \emph{toric} K\"ahler manifold.
\end{definition}

Recall that a K\"ahler manifold $(X,\omega)$ is
\emph{Einstein} if its Ricci form $\rho$ verifies
\begin{displaymath}
  \rho=\lambda\, \omega \quad \text{ with } \lambda\in \mathbb{R}. 
\end{displaymath}

\begin{example}
  \label{exm:1}
  The $n$-dimensional (complex) projective space $\mathbb{P}^{n}$ is a
  toric complex manifold for the torus $(\mathbb{C}^{\times})^{n}$
  with the action defined for
  $ t=(t_{1},\dots, t_{n})\in (\mathbb{C}^{\times})^{n} $ and
  $z=(z_{0}:z_{1}:\cdots:z_{n})\in \mathbb{P}^{n}$ as
\begin{displaymath}
  t \cdot z =(z_{0}:t_{1} z_{1}: \cdots : t_{n} z_{n}).
\end{displaymath}
The \emph{Fubini-Study form} of $\mathbb{P}^{n}$ is the K\"ahler form
$\omega_{\FS}$ defined at each point as the pullback of the $2$-form
on $\mathbb{C}^{n+1}\setminus \{0\}$
  \begin{displaymath}
    i \, \partial\overline\partial \log\Big( \sum_{j=0}^{n} |z_{j}|^{2}\Big)
  \end{displaymath}
  with respect to any local section of the projection
  $\mathbb{C}^{n+1}\setminus \{0\}\to \mathbb{P}^{n}$. This K\"ahler
  form is invariant under the action of the projective unitary
  group~$\PU(n+1)$ and is Einstein with constant $\lambda=n+1$
  \cite[Chapter~13]{Moroianu:lkg}. In particular
  $(\mathbb{P}^{n},\omega_{\FS})$ is a toric K\"ahler-Einstein
  manifold.
\end{example}

In the sequel we focus on the case when $X$ is a compact smooth toric
variety, that is a compact toric manifold which is also an algebraic
variety.  We present the necessary notions and facts, referring to
\cite{Fulton_toric, CoxLittleSchenck:tv} for the proofs and more
details.

Set
\begin{displaymath}
  M=\Hom(\mathbb{T}, \mathbb{C}^{\times}) \and N=\Hom(\mathbb{C}^{\times},\mathbb{T})
\end{displaymath}
for the lattices of characters and co-characters of the torus
$\mathbb{T}$. Both are isomorphic to~$\mathbb{Z}^{n}$ and dual of each
other, that is $N=M^{\vee}$ and $M=N^{\vee}$.  Set then
$N_{\mathbb{R}}=N\otimes_{\mathbb{Z}}\mathbb{R}$ and
$M_{\mathbb{R}}=M\otimes_{\mathbb{Z}}\mathbb{R}$. These vector spaces
are also dual of each other, and for $u\in N_{\mathbb{R}}$ and
$x\in M_{\mathbb{R}}$ we denote their pairing by
$\langle u,x \rangle$. Let $\mathbb{C}[M]$ be the group algebra of~$M$
over $\mathbb{C}$, and for each $m\in M$ we set
$\chi^{m}\in \mathbb{C}[M]$ for the corresponding monomial.

To a compact smooth toric variety $X$ corresponds a fan
$\Sigma=\Sigma_{X}$ on $N_{\mathbb{R}}$ that is \emph{complete} and
\emph{unimodular}, that is it covers the whole of this vector space
and each of its $n$-dimensional cones is generated by a basis of the
lattice $N$. For each integer~$0\le r\le n$ we denote by $\Sigma^{r}$
the collection of cones of $\Sigma$ of dimension $r$.

There is an inclusion-reversing bijection $\sigma\mapsto X_{\sigma}$
between the cones of $\Sigma$ and the affine toric varieties that glue
up to build $X$. The origin $0\in \Sigma$ corresponds to the
\emph{principal open subset} $X_{0}\subset X$ and is canonically
identified with the torus. The fact that $X$ is smooth implies that
for each $\sigma\in \Sigma^{r}$ we have
\begin{displaymath}
  X_{\sigma}\simeq \mathbb{C}^{r}\times (\mathbb{C}^{\times})^{n-r}.
\end{displaymath}
There is also a dimension-reversing bijection
\begin{math}
\sigma\mapsto V(\sigma)   
\end{math}
between the cones of $\Sigma$ and the invariant subvarieties of
$X$. In particular to each $1$-dimensional cone (or \emph{ray})
it corresponds an invariant hypersurface.

Every line bundle $L$ on $X$ can be realized as the line bundle
associated to a toric divisor, that is
\begin{displaymath}
  L=\mathcal{O}_{X}\Big(\sum_{\tau} a_{\tau} V(\tau)\Big),
\end{displaymath}
the sum being over the rays $\tau$ of the fan and where
$a_{\tau}\in \mathbb{Z}$ for each $\tau$. This representation is
unique up to a principal divisor of the form $\div(\chi^{m})$ with
$m\in M$. Then we associate to $L$ the lattice polytope in the vector
space $M_{\mathbb{R}}$ defined as
\begin{displaymath}
  \Delta_{L}=\{ x\in M_{\mathbb{R}} \mid \langle u_{\tau},x\rangle \ge -a_{\tau} \text{ for all } \tau\in \Sigma^{1} \}, 
\end{displaymath}
where $u_{\tau}$ denotes the smallest nonzero lattice vector in the
ray $\tau$. It is well-defined up to a translation by an element of
$M$.

When $L$ is ample we have that the polytope $\Delta_{L}$ is
$n$-dimensional and its normal fan coincides with $\Sigma$. In
particular there are a dimension-reversing bijections
\begin{displaymath}
F\longmapsto \sigma_{F}  \and \sigma\longmapsto F_{\sigma}
\end{displaymath}
between the faces of $\Delta_{L}$ and the cones of $\Sigma$.

The anticanonical line bundle of $X$ can be represented as
\begin{math}
  -K_{X} =\mathcal{O}_{X}(\sum_{\tau}  V(\tau))
\end{math}
and so its associated lattice polytope can be fixed to
\begin{equation}
  \label{eq:4}
  \Delta_{-K_{X}}=\{x\in M_{\mathbb{R}} \mid \langle u_{\tau},x\rangle \ge -1 \text{ for all } \tau\in \Sigma^{1} \}.
\end{equation}
If $X$ is Fano then $-K_{X}$ is ample and so $\Delta_{-K_{X}}$ is
$n$-dimensional. Hence this is a reflexive polytope, and in particular
the origin is its unique interior lattice point.

\subsection{Projectively induced forms}
\label{sec:proj-induc-kahl}

Here we establish some general facts about the K\"ahler-Einstein forms
on toric manifolds that are induced by projective immersions.

\begin{definition}
  \label{def:10}
A K\"ahler form $\omega$ on a compact manifold $X$  is  \emph{projectively induced} if there exists an
immersion \begin{math} \varphi \colon X \rightarrow \mathbb{P}^{s}
\end{math}
such that
\begin{displaymath}
  \omega=\varphi^{*} \omega_{\FS}.
\end{displaymath}
Up to reducing to a linear subspace of $\mathbb{P}^{s}$ we can assume
that the immersion is \emph{full}, that is its image is not contained
in any proper linear subspace.
\end{definition}

\begin{remark}
  \label{rem:6}
  If $(X,\omega)$ is a projectively induced compact K\"ahler manifold
  then the line bundle $L=\varphi^{*}\mathcal{O}(1)$ is positive
  because $\omega$ is a curvature form on~it. Hence by Kodaira's
  embedding theorem $L$ is ample, and in particular $X$ is an
  algebraic variety.

  Furthermore if $(X,\omega)$ is Einstein then its constant is
  positive by a theorem of Hulin \cite{Hulin:kempe}.  Since the Ricci
  form $\rho$ is a curvature form on $-K_{X}$, this line bundle is
  positive and so ample. Hence in this case $X$ is a Fano variety.
\end{remark}

Let $X$ be a compact toric manifold with torus $\mathbb{T}$.  A map
$\varphi\colon X \to \mathbb{P}^{s} $ is \emph{toric} if there exists
a homomorphism of tori
$\zeta\colon \mathbb{T}\to (\mathbb{C}^{\times})^{s}$ such that
\begin{displaymath}
  \varphi(t\cdot_{X}x)   = \zeta(t)\cdot_{\mathbb{P}^{s}} \varphi(x)  \quad \text{ for all } t\in \mathbb{T} \text{ and } x\in X.
\end{displaymath}
The restriction of this map to the principal open subset is
described in terms of monomials: there are $\alpha_{j}\in \mathbb{C}$,
$j=0,\dots, s$, not all zero and $m_{j}\in M$,
$j=0,\dots, s$,~such~that
\begin{displaymath}
  \varphi(x)=(\alpha_{0}\chi^{m_{0}}(x):\cdots:\alpha_{s}\chi^{m_{s}}(x)) \quad \text{ for all } x\in X_{0}. 
\end{displaymath}
These scalars and vectors form a set of \emph{coefficients} and
\emph{exponents} of~$\varphi$.

The pullback $L=\varphi^{*} \mathcal{O}(1)$ is a line bundle on $X$
whose polytope coincides with the convex envelope of the set of
exponents of the map, that is
\begin{equation}
  \label{eq:26}
\Delta_{L}=  \conv(m_{0},\dots, m_{s}) \subset M_{\mathbb{R}}.
\end{equation}

We next characterize  the toric maps that are  a full immersions.

\begin{definition}
  \label{def:11}
  A finite subset $S \subset M$ is \emph{unimodular} if for every
  vertex of its convex hull $v\in \conv(S)$ and every edge
  $E\preceq \conv(S)$ containing $v$ there exists $w_{E}\in E\cap S$
  such that the vectors $w_{E}-v$ form a basis of the lattice $M$.
\end{definition}

\begin{lemma}
\label{lem:14}
Let $\varphi\colon X\to \mathbb{P}^{s}$ be a toric map with
coefficients $\alpha_{j}\in \mathbb{C}$, $j=0,\dots, s$, and exponents
$m_{j}\in M$, $j=0,\dots, s$. The following conditions are equivalent:
  \begin{enumerate}[leftmargin=*]
  \item \label{item:12} $\varphi$ is a full immersion,
  \item \label{item:13} $\alpha_{j}\ne 0$ for all~$j$,
    $m_{j}\ne m_{k}$ for all $j\ne k$, and 
    $\{m_{0},\dots, m_{s}\}$~is~unimodular.
  \end{enumerate}
\end{lemma}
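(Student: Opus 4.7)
The plan is to check each of the three conditions in \ref{item:13} against its geometric counterpart, working in local coordinates at the torus-fixed points of $X$. Throughout, set $\Delta=\conv(m_{0},\ldots,m_{s})\subset M_{\mathbb{R}}$ and, for each maximal cone $\sigma\in\Sigma^{n}$, let $v_{\sigma}$ denote the vertex of $\Delta$ whose normal cone contains $\sigma$ and $x_{\sigma}$ the associated fixed point of $X$.

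For the fullness part, I would observe that $\varphi(X)$ is contained in a hyperplane $\sum_{j}c_{j}z_{j}=0$ of $\mathbb{P}^{s}$ if and only if the identity $\sum_{j}c_{j}\alpha_{j}\chi^{m_{j}}\equiv 0$ holds on the dense open subset $X_{0}$. By linear independence of distinct characters of $\mathbb{T}$, a nontrivial solution $(c_{j})$ exists precisely when some $\alpha_{j}$ vanishes or some two $m_{j}$'s coincide; hence fullness of $\varphi$ is equivalent to the first two requirements of \ref{item:13}.

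For the immersion part, I would compute the Jacobian of $\varphi$ at each $x_{\sigma}$. Because $\varphi$ extends across $x_{\sigma}$, the vertex $v_{\sigma}$ must appear among the exponents, say $v_{\sigma}=m_{j_{0}}$. Choose a basis $u_{1},\ldots,u_{n}$ of $N$ generating $\sigma$, with dual basis $e_{1}^{*},\ldots,e_{n}^{*}$ of $M$. In local coordinates $z$ on $X_{\sigma}\simeq\mathbb{C}^{n}$ centred at $x_{\sigma}$ and in the affine chart $z_{j_{0}}=1$ of $\mathbb{P}^{s}$, the map reads
\[
z \longmapsto \bigl(\alpha_{j}\alpha_{j_{0}}^{-1}z^{m_{j}-v_{\sigma}}\bigr)_{j\neq j_{0}}.
\]
Under the assumptions that the $\alpha_{j}$ are nonzero and the $m_{j}$ pairwise distinct, inspection of the linear terms shows that $d\varphi_{x_{\sigma}}$ is injective if and only if, for each $i=1,\ldots,n$, there exists an index $j(i)$ with $m_{j(i)}-v_{\sigma}=e_{i}^{*}$. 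Smoothness of $X$ forces the covectors $e_{i}^{*}$ to be precisely the primitive directions of the edges of $\Delta$ at $v_{\sigma}$, so injectivity of $d\varphi_{x_{\sigma}}$ matches the unimodularity condition of \ref{item:13} at the vertex $v_{\sigma}$.

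To promote injectivity at each fixed point to injectivity at every point, I would argue that the locus $Z\subset X$ where $d\varphi$ fails to be injective is Zariski closed (cut out by the $n\times n$ Jacobian minors) and $\mathbb{T}$-invariant (because $\varphi$ is toric, so $d\varphi_{t\cdot x}$ and $d\varphi_{x}$ have isomorphic kernels). Since $X$ is complete, any nonempty closed $\mathbb{T}$-invariant subvariety contains some fixed point $x_{\sigma}$, so injectivity at all the $x_{\sigma}$ already forces $Z=\emptyset$. Combining this with the fact that every vertex of $\Delta$ arises as some $v_{\sigma}$ yields the claimed equivalence. The main obstacle in this plan is the bookkeeping in the local Jacobian formula at $x_{\sigma}$ and the identification of the primitive edge directions of $\Delta$ at $v_{\sigma}$ with the dual basis $e_{1}^{*},\ldots,e_{n}^{*}$; both steps rely crucially on the smoothness of $X$.
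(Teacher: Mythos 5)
Your proposal is correct and follows essentially the same route as the paper: fullness is detected via linear independence of distinct characters, and the immersion condition is read off from the Jacobian of the local monomial representation at the torus-fixed points, where it matches the unimodularity of the exponent set at the corresponding vertex of $\conv(m_{0},\dots,m_{s})$. The only real difference is how injectivity of the differential is propagated beyond the fixed points: you use the closed $\mathbb{T}$-invariant degeneracy locus together with completeness of $X$, whereas the paper observes that unimodularity makes the local monomial map a graph on each affine chart $X_{\sigma}\simeq\mathbb{C}^{n}$, which already covers all of $X$.
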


\begin{proof}
  Assume that \eqref{item:12} holds.  Since $\varphi$ is full the
  first two conditions in \eqref{item:13} are necessary because
  otherwise the image of this map would be contained in a hyperplane.
  For the third condition, let $v=m_{j_{0}}$ be a vertex of the
  polytope $ \conv(m_{0},\dots, m_{s})$.  For convenience we rescale
  and reorder the coefficients and exponents so that
  $\alpha_{j_{0}}=1$ and then~$j_{0}=0$.

  Since $\varphi$ is an immersion we have that $X$ is a compact smooth
  toric variety and $L=\varphi^{*}\mathcal{O}(1)$ is an ample line
  bundle on it. By \eqref{eq:26} we have that $m_{0}$ is a vertex of
  the polytope $\Delta_{L}$, and so it corresponds to an
  $n$-dimensional cone $\sigma$ of the fan associated to $X$. Hence
  $\varphi$ restricts to a monomial map between the charts
  $X_{\sigma}\simeq \mathbb{C}^{n}$ and
  $(z_{0}\ne 0)\simeq \mathbb{C}^{s}$ that writes down in these
  coordinates as

  \begin{equation}
    \label{eq:35}
  \mathbb{C}^{n} \longrightarrow \mathbb{C}^{s}, \quad z\longmapsto (\alpha_{1} z^{a_{1}}, \dots, \alpha_{s}z^{a_{s}}),
\end{equation}
where $a_{j}\in \mathbb{Z}^{n}$ corresponds to the difference
$m_{j}-m_{0}$ through the isomorphism $M \simeq \mathbb{Z}^{n}$ given
by the identification $X_{\sigma}\simeq \mathbb{C}^{n}$, and where
$ z^{a_{j}}= z_{1}^{a_{j,1}}\cdots z_{n}^{a_{j,n}}$ for
$z=(z_{1},\dots, z_{n})\in \mathbb{C}^{n}$.

The Jacobian matrix of this map writes down as
\begin{displaymath}
  J_{\varphi}(z)= (\alpha_{j} a_{j,k}z^{a_{j}-e_{k}})_{j,k} \in \mathbb{C}^{s\times n}
  \quad \text{ for }
  z\in \mathbb{C}^{s},
\end{displaymath}
where $e_{k}$ denotes the $k$-th vector in the standard basis of
$\mathbb{Z}^{n}$. Evaluating at the origin we get
$ J_{\varphi}(0)_{j,k} \ne 0 $ if and only if $ a_{j}=e_{k}$.  Since
$\varphi$ is an immersion we have
\begin{displaymath}
\rank(J_{\varphi}(0))=n
\end{displaymath}
and so there are $ j_{1},\dots, j_{n} \in \{0,\dots, s\}$ such that
$a_{j_{k}}=e_{k}$ for each $k$. From here we get that
$\conv(m_{0},\dots, m_{s})$ has exactly $n$ edges $E_{k}$,
$i=1,\dots, n$, containing $v$, and for each $k$ there is
$m_{j_{k}}\in E_{k}$ such that $m_{j_{k}}-m_{0}$ corresponds to
$e_{k}$ under the above isomorphism. Hence
\begin{displaymath}
 m_{j_{1}}-m_{0}, \dots, m_{j_{n}}-m_{0} 
\end{displaymath}
is a basis of $M$, and since this holds for every vertex of the
polytope we conclude that the set $\{m_{0},\dots, m_{s}\}$ is
unimodular.

Conversely if \eqref{item:13} holds then the toric map $\varphi$ is
full: otherwise there would be a linear relation between the monomials
$\alpha_{j}\chi^{m_{j}}$, $j=0,\dots, s$, which is not possible
because they are different and nonzero. Finally the assumption that
the set of exponents is unimodular implies that the local map in
\eqref{eq:35} is a graph, and in particular an immersion.
\end{proof}

We now show that every projectively induced toric K\"ahler form on $X$
is induced by a toric full immersion. In particular this is also the
case for a projectively induced K\"ahler-Einstein form on this compact
toric manifold.

\begin{lemma}
  \label{lem:1}
  Let $\omega$ be a toric K\"ahler form on $X$ induced by a full
  immersion $\varphi\colon X\to \mathbb{P}^{s}$. Then there is
  $A\in \PU(n+1)$ such that the full immersion
  $A\, \varphi \colon X\to \mathbb{P}^{s}$ is toric.
\end{lemma}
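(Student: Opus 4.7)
The plan is to apply Calabi's rigidity theorem to turn the $\mathbb{S}$-invariance of $\omega$ into a linear unitary action on the target $\mathbb{P}^{s}$, and then to diagonalize that action by a unitary change of homogeneous coordinates. (I read $\PU(n+1)$ in the statement as a typo for $\PU(s+1)$, since $A$ must act on the target $\mathbb{P}^{s}$.)

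For each $t\in \mathbb{S}$ let $L_{t}\colon X\to X$ denote the action of $t$. Since $\omega$ is $\mathbb{S}$-invariant, the composition $\varphi\circ L_{t}$ is again a full Kähler immersion of $X$ into $\mathbb{P}^{s}$ inducing the same form $\omega$. Calabi's rigidity theorem \cite{Calabi} then yields a unique $U(t)\in \PU(s+1)$ with $\varphi\circ L_{t}=U(t)\circ \varphi$. Uniqueness together with the fullness of $\varphi$ forces $U\colon \mathbb{S}\to \PU(s+1)$ to be a group homomorphism, and a standard continuity argument (applying uniqueness to a converging net $t_{\alpha}\to t$) shows that it is continuous. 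The image $U(\mathbb{S})$ is therefore a connected compact abelian subgroup of $\PU(s+1)$, hence contained in some maximal torus.

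Now, all maximal tori of $\PU(s+1)$ are conjugate, and one of them is precisely the image of the diagonal compact torus $\mathbb{S}^{s}\subset (\mathbb{C}^{\times})^{s}$ acting on $\mathbb{P}^{s}$ as in Example \ref{exm:1}. Thus one can select $A\in \PU(s+1)$ such that $A\, U(t)\, A^{-1}$ lies in that standard diagonal compact torus for every $t\in \mathbb{S}$. Setting $\widetilde{\varphi}=A\,\varphi$, this reads $\widetilde{\varphi}\circ L_{t}=(A\, U(t)\, A^{-1})\cdot \widetilde{\varphi}$, which expresses that $\widetilde{\varphi}$ is equivariant with respect to the $\mathbb{S}$-action on $X$ and the standard diagonal $\mathbb{S}^{s}$-action on $\mathbb{P}^{s}$.

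The remaining step upgrades this compact-torus equivariance to holomorphic equivariance with respect to $\mathbb{T}$. Restricting $\widetilde{\varphi}$ to the principal open subset $X_{0}\simeq \mathbb{T}$ and expanding each homogeneous component of a local lift as a Laurent series on $\mathbb{T}$, the $\mathbb{S}$-equivariance forces each such component to sit inside a single isotypic subspace for the $\mathbb{S}$-action, hence to be a constant multiple of a single character $\chi^{m_{j}}$. The resulting continuous homomorphism $\mathbb{S}\to \mathbb{S}^{s}$ extends uniquely to a holomorphic homomorphism $\zeta\colon \mathbb{T}\to (\mathbb{C}^{\times})^{s}$, and holomorphicity of $\widetilde{\varphi}$ together with the identity principle promotes the $\mathbb{S}$-equivariance to $\mathbb{T}$-equivariance on the dense open $X_{0}$ and then on all of $X$ by continuity. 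The main obstacle I anticipate is a clean invocation of Calabi's rigidity theorem in the present form (matching its hypotheses of fullness and non-degeneracy with those of our $\varphi$) and the careful passage from compact to complex equivariance; both are standard in principle but merit explicit verification.
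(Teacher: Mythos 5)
Your proposal is correct and follows essentially the same route as the paper: Calabi's rigidity theorem yields a homomorphism $\mathbb{S}\to\PU(s+1)$ intertwining the actions, which is then conjugated into the standard diagonal torus by some $A$, and the $\mathbb{S}$-equivariance is upgraded to $\mathbb{T}$-equivariance by extending the character homomorphism and invoking holomorphicity. Your reading of $\PU(n+1)$ as a typo for $\PU(s+1)$ is also consistent with the paper's own proof, which produces $A\in\PU(s+1)$.
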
  

\begin{proof} Since $\omega$ is invariant under the action of the
  compact subtorus $\mathbb{S}$, by Calabi's rigidity theorem
  \cite[Theorem 9]{Calabi} (see also 
  \cite[Theorem~2.2]{LZ:kikmcsf}) there is a group homomorphism
  \begin{math}
\theta\colon \mathbb{S} \to \PU(s+1)
   \end{math} 
   such that
   \begin{equation}
     \label{eq:48}
 \varphi(t\cdot_{X} x) = \theta(t) \,\varphi(x) \quad \text{ for
   all } t\in \mathbb{S} \text{ and } x\in X.       
   \end{equation}
   Since the representations of $\mathbb{S}$ are diagonalizable there
   exists $A\in \PU(s+1)$ and a homomorphism
   $\zeta\colon \mathbb{S}\to (S^{1})^{s}\hookrightarrow \PU(s+1)$ such
   that
   \begin{equation}
     \label{eq:49}
    \theta(t)= A^{-1} \zeta(t)  A \quad \text{ for all } t\in \mathbb{S}.
  \end{equation}
  
  Recall that $X_{0}$ and $\mathbb{T}$ are canonically isomorphic and
  denote by $x_{0} \in X_{0}$ the point of the principal open subset
  corresponding to the unit of the torus. Setting
  $\alpha=A\, \varphi(x_{0}) \in \mathbb{P}^{s}$ we deduce
  from~\eqref{eq:48} and \eqref{eq:49} that
  \begin{displaymath}
    A \, \varphi(t \cdot_{X}x_{0})= \zeta(t) \cdot_{\mathbb{P}^{s}} \alpha  \quad \text{ for all } t\in \mathbb{S}.
  \end{displaymath}
  The homomorphism $\zeta$ is defined by a sequence of characters, and
  so it extends to a homomorphism
  $\zeta\colon \mathbb{T}\to (\mathbb{C}^{\times})^{s}$. Hence both
  $t\mapsto A \, \varphi(t\cdot_{X}x_{0}) $ and
  $t\mapsto \zeta(t) \cdot_{\mathbb{P}^{s}} \alpha $ are holomorphic
  maps that coincide on $\mathbb{S}$, and so they are equal.

Now let  $t,t'\in \mathbb{T}$ and $x=t'\cdot_{X} x_{0}\in X_{0}$. Hence 
  $t \cdot_{X} x=(t\cdot_{\mathbb{T}}t')\cdot_{X} x_{0}$ and so 
  \begin{displaymath}
    A \, \varphi(t \cdot_{X} x)= \zeta(t\cdot_{\mathbb{T}} t') \cdot_{\mathbb{P}^{s}} \alpha
    =(\zeta(t)\cdot_{(\mathbb{C}^{\times})^{s}} \zeta(t') ) \cdot_{\mathbb{P}^{s}} \alpha
    =\zeta(t)\cdot_{\mathbb{P}^{s}} (\zeta(t')  \cdot_{\mathbb{P}^{s}} \alpha) 
    =\zeta(t)\cdot_{\mathbb{P}^{s}} A\, \varphi(x),
  \end{displaymath}
  showing that the map $A\, \varphi \colon X\to \mathbb{P}^{s}$ is
  toric.
\end{proof}
 
\begin{lemma}
\label{lem:15}
Let $\omega$ be a projectively induced K\"ahler-Einstein form on $X$.
Then $\omega$ is induced by a toric full immersion into a projective
space.
\end{lemma}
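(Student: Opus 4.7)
The plan is to reduce the general K\"ahler-Einstein case to the $\mathbb{S}$-invariant case already handled by Lemma \ref{lem:1}. By Remark \ref{rem:6}, our hypotheses force $X$ to be a Fano manifold, and we are given a full immersion $\varphi\colon X\to \mathbb{P}^{s}$ with $\omega=\varphi^{*}\omega_{\FS}$.

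The key input is a standard rigidity result for K\"ahler-Einstein metrics on Fano manifolds, most conveniently phrased as Matsushima's theorem: the identity component $\mathrm{Isom}^{0}(X,\omega)$ of the isometry group is a maximal compact subgroup of the identity component $\mathrm{Aut}^{0}(X)$ of the biholomorphism group. Since $X$ is toric we have $\mathbb{S}\subseteq \mathbb{T}\subseteq \mathrm{Aut}^{0}(X)$, and by the conjugacy of maximal compact subgroups in a connected Lie group there is $g\in \mathrm{Aut}^{0}(X)$ with $g\,\mathbb{S}\,g^{-1}\subseteq \mathrm{Isom}^{0}(X,\omega)$. A short check using $(gtg^{-1})^{*}\omega=\omega$ for $t\in\mathbb{S}$ rewrites this inclusion as $t^{*}(g^{*}\omega)=g^{*}\omega$, so that $g^{*}\omega$ is a toric K\"ahler form on $X$.

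Because $g^{*}\omega=(\varphi\circ g)^{*}\omega_{\FS}$ and $\varphi\circ g$ is again a full immersion, this toric form remains projectively induced. Applying Lemma \ref{lem:1} to the pair $(g^{*}\omega,\varphi\circ g)$ produces $A\in \PU(s+1)$ such that $\psi:=A\circ\varphi\circ g\colon X\to\mathbb{P}^{s}$ is a toric full immersion, and $\psi^{*}\omega_{\FS}=g^{*}\omega$ since $A$ preserves $\omega_{\FS}$. As $g$ is a biholomorphism of $X$, the K\"ahler-Einstein structures $(X,\omega)$ and $(X,g^{*}\omega)$ are equivalent, so after this identification $\omega$ itself is induced by the toric full immersion $\psi$.

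The main obstacle is the first reduction: the given $\omega$ need not be $\mathbb{S}$-invariant to begin with, and to replace it by an invariant form one must exploit the automorphism-uniqueness of K\"ahler-Einstein metrics on Fano manifolds to produce the biholomorphism $g$ aligning $\mathbb{S}$ with the isometry group of $\omega$. Once this alignment is in place, the remainder of the argument is a formal application of Lemma \ref{lem:1} together with the invariance of $\omega_{\FS}$ under $\PU(s+1)$.
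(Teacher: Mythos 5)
Your proof is correct and follows essentially the same route as the paper: both arguments rest on the fact that the identity component of the isometry group of the K\"ahler--Einstein form is a maximal compact subgroup of $\mathrm{Aut}^{0}(X)$, so that $\mathbb{S}$ can be conjugated into it and Lemma \ref{lem:1} applies; the paper extracts this maximal-compactness from Calabi's theorem on extremal metrics (Theorem 3 of \cite{Calabi:ekm}), whereas you quote Matsushima's theorem, which is the K\"ahler--Einstein special case of the same rigidity statement. The identification of $(X,\omega)$ with $(X,g^{*}\omega)$ that you spell out at the end is also implicitly needed in the paper's terser deduction that ``$\omega$ is toric'', so your version is, if anything, more explicit about the conjugation step.
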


\begin{proof}
  Since $\omega$ is K\"ahler-Einstein it is critical in the sense of
  \cite{Calabi:ekm}.  By Theorem 3 in \emph{loc. cit.}  we
  have that $\omega$ is toric, and so the statement follows from Lemma
  \ref{lem:1}.
\end{proof}

\subsection{Toric potentials}
\label{sec:toric-potentials-1}

Let $\varphi\colon X\to \mathbb{P}^{s}$ be a toric full immersion of a
compact toric manifold $X$ with torus $\mathbb{T}$, and let $\omega$
and $\rho$ be the associated K\"ahler and Ricci~forms.

To choose coordinates we identify the principal open subset $X_{0}$ with
$(\mathbb{C}^{\times})^{n}$.  The exponential
\begin{displaymath}
 \exp \colon \mathbb{C}^{n}/2\pi i \mathbb{Z}^{n} \rightarrow
(\mathbb{C}^{\times})^{n} 
\end{displaymath}
is an isomorphism of complex spaces, and we will study the Einstein
condition for these forms on the chart of $X$ given by this map.

By Lemma \ref{lem:14} we have 
\begin{displaymath}
     \varphi(x)=(\alpha_{0} \chi^{m_{0}}(x):\cdots:\alpha_{s}\chi^{m_{s}}(x)) \quad \text{ for } x\in X_{0}=(\mathbb{C}^{\times})^{n} 
\end{displaymath}
with $\alpha_{j}\in \mathbb{C}^{\times}$ and
$m_{j}\in M=\mathbb{Z}^{n}$ such that $m_{j}\ne m_{k}$ for all
$j\ne k$ and the set $\{m_{0},\dots, m_{s}\}\subset \mathbb{Z}^{n}$ is
unimodular.  Then we associate to the toric full
immersion~$\varphi$ the Laurent polynomial
  \begin{equation}
    \label{eq:23}
  p_{ \varphi}=\sum_{j=0}^{s} |\alpha_{j}|^{2} \chi^{m_{j}} \in \mathbb{R}_{>0}[\mathbb{Z}^{n}].
\end{equation}
By \eqref{eq:26} its Newton polytope coincides with the polytope
$\Delta_{L}$ associated to the line bundle
$L=\varphi^{*}\mathcal{O}(1)$.

Now let
$ \mathbb{C}[\mathbb{Z}^{n}] = \mathbb{C}[x_{1}^{\pm1},\dots,
x_{n}^{\pm1}] $ be the algebra of Laurent polynomials over the complex
numbers and
$\mathbb{C}(\mathbb{Z}^{n})= \mathbb{C}(x_{1},\dots, x_{n})$ its field
of rational functions.  Let $\mathbb{L}$ be any differential extension
of $\mathbb{C}[\mathbb{Z}^{n}]$ containing the logarithms of all
nonzero Laurent polynomials and consider the differential operators on
$ \mathbb{L}$ defined~as
\begin{equation}
  \label{eq:74}
  D_{i}=x_{i} \frac{\partial}{\partial x_{i}}, \quad i=1,\dots,  n.
\end{equation}

\begin{definition}[$\delta$-operator]
  \label{def:9}
For
$p\in \mathbb{C}[\mathbb{Z}^{n}] \setminus \{0\}$
we set
\begin{displaymath}
  \delta_{\mathbb{Z}^{n}}(p)= \det(D^{2}\log(p)) \in \mathbb{C}(\mathbb{Z}^{n}),
\end{displaymath}
where 
\begin{math}
D^{2}\log(p)=(D_{i}D_{j}\log(p))_{i,j} \in  \mathbb{C}(\mathbb{Z}^{n})^{n\times n} 
\end{math}
denotes the $D$-Hessian matrix of the logarithm of this Laurent
polynomial. When the lattice is clear from the context we denote this
rational function simply by $\delta(p)$.
\end{definition}

\begin{lemma}
\label{lem:16}
  Let
  $f,h\colon \mathbb{C}^{n}/2\pi i
  \mathbb{Z}^{n}\to \mathbb{R}$ be the functions respectively defined
  as
  \begin{displaymath}
    f(u+iv)=\log (p_{ \varphi}(e^{2u})) \and     h(u+iv)=\log (\delta(p_{ \varphi})(e^{2u})).
  \end{displaymath}
  Then $\omega= i \partial \overline{\partial} f$ and
  $\rho= -i \partial \overline{\partial} h$.
\end{lemma}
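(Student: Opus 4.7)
The plan is a direct computation, unraveling both identities in the logarithmic chart $w=u+iv\mapsto x=e^{w}$ which identifies $\mathbb{C}^{n}/2\pi i\mathbb{Z}^{n}$ with the principal open subset $X_{0}\simeq(\mathbb{C}^{\times})^{n}$.

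For the first identity I would start from Example~\ref{exm:1}: $\omega_{\FS}$ is the $(1,1)$-form whose pullback to $\mathbb{C}^{s+1}\setminus\{0\}$ under any local section of $\mathbb{C}^{s+1}\setminus\{0\}\to\mathbb{P}^{s}$ has potential $\log\sum_{k}|z_{k}|^{2}$. Since by Lemma~\ref{lem:14} the monomial expression of $\varphi$ on $X_{0}$ provides a global such lift there, one has
\begin{displaymath}
\varphi^{*}\omega_{\FS}=i\,\partial\overline{\partial}\log\sum_{j=0}^{s}|\alpha_{j}\chi^{m_{j}}(x)|^{2}
\end{displaymath}
on $X_{0}$. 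Plugging in $x=e^{w}$ and using $|\chi^{m_{j}}(e^{w})|^{2}=e^{2\langle u,m_{j}\rangle}=(e^{2u})^{m_{j}}$, each summand becomes $|\alpha_{j}|^{2}(e^{2u})^{m_{j}}$, so the whole sum equals $p_{\varphi}(e^{2u})$ by \eqref{eq:23}. Taking logarithms yields exactly $f(u+iv)$, giving the first formula.

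For the second identity I would invoke the standard local-coordinate expression $\rho=-i\,\partial\overline{\partial}\log\det(g_{i\bar\jmath})$, where $g_{i\bar\jmath}=\partial^{2}f/\partial w_{i}\partial\overline{w}_{j}$ are the components of the K\"ahler metric associated to $\omega$. The key observation is that $f$ depends only on $w+\overline{w}=2u$: writing $\tilde f(y)=\log p_{\varphi}(e^{y})$ for $y\in\mathbb{R}^{n}$, we have $f(w)=\tilde f(w+\overline{w})$, and hence the mixed complex second partials reduce to real second partials, $\partial^{2}f/\partial w_{i}\partial\overline{w}_{j}=(\partial^{2}\tilde f/\partial y_{i}\partial y_{j})(2u)$. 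A further change of variables $x=e^{y}$ identifies $\partial/\partial y_{i}$ with the operator $D_{i}=x_{i}\partial/\partial x_{i}$ of~\eqref{eq:74}, so a short chain-rule computation gives $(\partial^{2}\tilde f/\partial y_{i}\partial y_{j})(y)=(D_{i}D_{j}\log p_{\varphi})(e^{y})$. Taking determinants and applying Definition~\ref{def:9} yields $\det(g_{i\bar\jmath})=\delta(p_{\varphi})(e^{2u})$, so $\log\det(g_{i\bar\jmath})=h(u+iv)$ and the Ricci form formula follows.

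No deep obstacle is anticipated: the proof is really an exercise in bookkeeping. The items to track carefully are the factor of $2$ arising from $|e^{w_{i}}|^{2}=e^{2\Re w_{i}}$, and the two successive changes of variables (from complex $w$ to real $y=w+\overline{w}$ and then from $y$ to $x=e^{y}$), the second of which converts the Euclidean Hessian in $y$ into the logarithmic Hessian $D_{i}D_{j}$ that enters the definition of $\delta$. Positivity of $\delta(p_{\varphi})(e^{2u})$, needed to make sense of $\log$, is automatic since this determinant coincides with $\det(g_{i\bar\jmath})$ and the metric is K\"ahler.
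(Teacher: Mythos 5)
Your proposal is correct and follows essentially the same route as the paper: identify $f$ as the pullback of the Fubini--Study potential through the monomial lift on $X_{0}$, use the formula $\rho=-i\,\partial\overline\partial\log\det(g_{i\bar\jmath})$, exploit that the potential depends only on $\Re w$ to reduce the complex Hessian to a real one, and convert it into the $D$-Hessian so that its determinant becomes $\delta(p_{\varphi})(e^{2u})$. The only cosmetic difference is that you route the chain rule through $y=w+\overline w$ while the paper works directly with $u$ and tracks the factors of $4$; the outcome is identical.
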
  

\begin{proof}
  We have
  \begin{displaymath}
  f(u+iv)= \log\Big(\sum_{j=0}^{s} |\alpha_{j}
  e^{\langle m_{j},u+iv \rangle}|^{2}\Big) =(\Phi\circ \exp)^{*} \log \|\cdot\|_{2} \quad \text{ for all } u+iv \in \mathbb{C}^{n}/2\pi i,
  \mathbb{Z}^{n}
\end{displaymath}
where $\|\cdot\|_{2}$ denotes the Euclidean norm on $\mathbb{C}^{s}$
and
$\Phi\colon (\mathbb{C}^{\times})^{n}\to \mathbb{C}^{s}\setminus
\{0\}$ the monomial map
$x\mapsto (\alpha_{0}\chi^{m_{0}}(x),\dots,
\alpha_{s}\chi^{m_{s}}(x))$. By the definition of the Fubini-Study
form (Example \ref{exm:1}) this function is a potential for $\omega$
on $\mathbb{C}^{n}/2\pi i \mathbb{Z}^{n}$.

In this chart we have
\begin{displaymath}
  \omega = i \sum_{k,l=1}^{n}  \frac{\partial^{2} f}{\partial z_{k} \partial\overline z_{l}} \, dz_{k}\wedge d\overline z_{l}
\end{displaymath}
with $z_{k}=u_{k}+i v_{k}$ for each $k$.  Setting
\begin{math}
 H =( \frac{\partial^{2} f}{\partial z_{k}  \partial\overline z_{l}})_{k,l}
\end{math}
we have that $H(u+iv)$ is a positive Hermitian matrix for all $u+iv$
and so its determinant gives a function
\begin{displaymath}
  \det(H)\colon  \mathbb{C}^{n}/2\pi i \mathbb{Z}^{n} \longrightarrow \mathbb{R}_{>0}.
\end{displaymath}
By \cite[Formula (12.6)]{Moroianu:lkg} the Ricci form can be
defined as
\begin{displaymath}
 \rho=-i \partial\overline\partial \log(\det(H )) . 
\end{displaymath}

Our potential depends only on the real part and so
$ \frac{\partial^{2} f}{\partial z_{k}\partial \overline z_{l}}=
\frac{1}{4} \frac{\partial^{2} f}{\partial u_{k}\partial u_{l}} $ for
each $k,l$.  We have
\begin{displaymath}
  \frac{\partial f}{\partial u_{k}}(u+iv)= 2\, e^{2 u_{k}} \frac{\partial \log(p_{ \varphi})}{\partial x_{k}} (e^{2 {u}}) = 2 D_{k} \log(p_{ \varphi}) (e^{2 {u}})
\end{displaymath}
and then
\begin{math}
  \frac{\partial^{2} f}{\partial u_{k}\partial u_{l}}(u+iv) = 4
  D_{k}D_{l}\log(p_{ \varphi})(e^{2 {u}}),
\end{math}
which implies
\begin{displaymath}
\frac{\partial^{2} f}{\partial z_{k}\partial \overline{z}_{l}}(u+iv)=  D_{k}D_{l}\log(p_{ \varphi})(e^{2u}).
\end{displaymath}
Thus
\begin{math}
H(u+iv)=\det(  D^{2}\log(p_{ \varphi})) (e^{2u}).
\end{math}
\end{proof}

We give an algebraic characterization of the Einstein condition.

  \begin{proposition}
    \label{prop:6}
    The K\"ahler form $\omega$ verifies the Einstein condition with
    constant $\lambda>0$ if and only if there exist
    $c\in \mathbb{R}_{>0}$ and $m\in \mathbb{Z}^{n}$ such that
  \begin{equation}
    \label{eq:37}
    \delta(p_{\varphi})= c \chi^{m} p_{\varphi}^{-\lambda}.
  \end{equation}
\end{proposition}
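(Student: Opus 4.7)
The plan is to apply Lemma~\ref{lem:16} to reduce the Einstein condition $\rho = \lambda\omega$ to a pluriharmonicity statement on the torus chart, and then exploit the fact that the relevant pluriharmonic function depends only on the real part of the coordinate to extract the desired algebraic identity.

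First, by Lemma~\ref{lem:16}, on the chart $\mathbb{C}^{n}/2\pi i \mathbb{Z}^{n} \simeq X_{0}$ we have $\omega = i\partial\overline\partial f$ and $\rho = -i\partial\overline\partial h$, with $f(u+iv) = \log p_{\varphi}(e^{2u})$ and $h(u+iv) = \log\delta(p_{\varphi})(e^{2u})$. Setting $G := h + \lambda f$, the Einstein condition $\rho = \lambda\omega$ is therefore equivalent to $i\partial\overline\partial G = 0$ on this chart; since $X_{0}$ is dense in $X$ and both forms are smooth, this is in turn equivalent to $\rho = \lambda\omega$ on all of $X$.

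Next, since $f$ and $h$ depend only on the real part $u$, so does $G$. Writing $\partial/\partial z_{k} = \tfrac{1}{2}(\partial/\partial u_{k} - i\partial/\partial v_{k})$ and using the absence of $v$-dependence, the vanishing of $\partial\overline\partial G$ reduces to the vanishing of the real Hessian of $G(u)$, and so $G$ must be $\mathbb{R}$-affine: there exist $b \in \mathbb{R}^{n}$ and $c' \in \mathbb{R}$ with $G(u) = \langle b, u\rangle + c'$. Exponentiating and substituting $x = e^{2u}$ yields $\delta(p_{\varphi})(x)\, p_{\varphi}(x)^{\lambda} = c\, x^{b/2}$ on $(\mathbb{R}_{>0})^{n}$, with $c = e^{c'}$, which is the identity $\delta(p_{\varphi}) = c\,\chi^{m} p_{\varphi}^{-\lambda}$ for $m := b/2$. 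The converse direction is immediate: given the identity, taking logarithms and pulling back through $x = e^{2u}$ gives $G(u) = \log c + 2\langle m, u\rangle$, which is $\mathbb{R}$-affine and hence pluriharmonic, so $\rho = \lambda\omega$.

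The one delicate point, and the main obstacle, is to show the integrality $m \in \mathbb{Z}^{n}$ in the forward direction; the pluriharmonicity argument alone only produces $m \in \mathbb{R}^{n}$. I would establish it by using that $\delta(p_{\varphi}) \in \mathbb{C}(\mathbb{Z}^{n})$ is a rational function whose Laurent expansions are supported in the character lattice and that $p_{\varphi} \in \mathbb{R}[\mathbb{Z}^{n}]$. After fixing a single-valued branch of $p_{\varphi}^{\lambda}$ on a neighborhood $U$ of $1 \in \mathbb{T}$ where $p_{\varphi}$ is nonvanishing, one compares the monodromies of the two sides of $\delta(p_{\varphi}) p_{\varphi}^{\lambda} = c\, \chi^{m}$ around the generators of $\pi_{1}(\mathbb{S}) \simeq \mathbb{Z}^{n}$: the monodromy of $\chi^{m}$ is $e^{2\pi i m}$, while that of the left-hand side is controlled by the integer winding numbers of $\delta(p_{\varphi})$ and $p_{\varphi}$, together with the rationality of $\lambda$ coming from the cohomological identity $c_{1}(-K_{X}) = \lambda\, c_{1}(\varphi^{*}\mathcal{O}(1))$ in $H^{2}(X,\mathbb{Z})\otimes\mathbb{Q}$. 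This is enough to pin down $m$ as a lattice vector; everything else in the proof is a direct translation through Lemma~\ref{lem:16}.
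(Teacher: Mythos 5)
Your reduction is exactly the paper's: both proofs invoke Lemma~\ref{lem:16}, observe that the Einstein condition amounts to $h+\lambda f\in\Ker(\partial\overline\partial)$ on the dense chart, use the independence of $v$ to force this function to be affine in $u$ (the paper phrases this through an entire holomorphic $F$ with $F+\overline F$ affine, you through the vanishing of the real Hessian of a $v$-independent function; these are the same computation), and then exponentiate. Up to this point your argument is correct and delivers \eqref{eq:37} with some $m\in\mathbb{R}^{n}$ --- which is also all that the paper's argument actually delivers.

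The gap is the integrality of $m$, which you rightly isolate as the delicate point, but your proposed monodromy repair does not close it. Around a loop on which a vertex monomial $\chi^{v}$ of $p_{\varphi}$ dominates, the winding number of $p_{\varphi}$ is $\langle e_k^\vee,v\rangle$ and $\delta(p_{\varphi})$ is single-valued (rational), so the comparison yields only the congruence $m\equiv \lambda v \pmod{\mathbb{Z}^{n}}$; together with $\lambda\in\mathbb{Q}$ this places $m$ in $\mathbb{Q}^{n}$, not in $\mathbb{Z}^{n}$. In fact no argument can do better, because the claim $m\in\mathbb{Z}^n$ is not invariant under the ambiguity in the monomial representation of $\varphi$: replacing $p_{\varphi}$ by $\chi^{a}p_{\varphi}$ leaves $\delta(p_{\varphi})$ unchanged but multiplies $p_{\varphi}^{-\lambda}$ by $\chi^{-\lambda a}$. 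Concretely, for $\varphi\colon\mathbb{P}^{1}\to\mathbb{P}^{3}$ with $p_{\varphi}=x(1+x)^{3}=x+3x^{2}+3x^{3}+x^{4}$ (positive coefficients, unimodular support) the induced form is $3\,\omega_{\FS}$, Einstein with $\lambda=2/3$, yet $\delta(p_{\varphi})=3x(1+x)^{-2}=3\,x^{5/3}\,p_{\varphi}^{-2/3}$, so $m=5/3$. The paper's own one-line justification (``the equation implies $\chi^{m}\in\mathbb{R}(x_{1},\dots,x_{n})$'') has the same defect, since $p_{\varphi}^{\lambda}$ need not be rational; the honest conclusion is \eqref{eq:37} with $m\in\mathbb{R}^{n}$, namely $m=\lambda v+\sum_{b\in B_{v}}b$ for any vertex $v$ of $\NP(p_{\varphi})$, and this is all that is used downstream (in Proposition~\ref{prop:1} the exponent is re-derived to be $0$ by a Newton-polytope comparison valid for any real $m$). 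So: right structure, correctly flagged worry, but the proposed fix would fail, as must any other.
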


\begin{proof}
In the  notation of in Lemma \ref{lem:16} the Einstein condition for
  $\omega$ reduces to
  $ h \equiv -\lambda f \pmod{\Ker( \partial\overline\partial)}$.
  Hence
  \begin{displaymath}
h(u+iv)=-\lambda f(u+iv) + F(u+iv)+ \overline{F (u+iv)} \quad \text{ for
  all } u+iv\in \mathbb{C}^{n}/2\pi i \mathbb{Z}^{n} 
  \end{displaymath}
  with $F$ holomorphic, and so entire.

  Since $f$ and $h$ do not depend on the variable~$v$ this is 
  also the case for $F+\overline{F}$.  Considering the representation
  of $F$ as a power series in $u+iv$ we see that this happens exactly
  when $F$ is affine, that is when
  \begin{math}
  F(u+iv)=\alpha + \langle \beta,u+iv\rangle 
  \end{math}
  with $\alpha\in \mathbb{C}$ and $\beta\in \mathbb{R}^{n}$. 
We get
\begin{displaymath}
  \delta(p_{\varphi})(e^{2u}) =    e^{\alpha+\overline{\alpha} + \langle \beta , 2u\rangle} p_{\varphi}(e^{2u})^{-\lambda}
\quad \text{ for
  all } u\in \mathbb{R}^{n},
\end{displaymath}
which readily gives \eqref{eq:37} with
$c=e^{\alpha+\overline{\alpha}} \in \mathbb{R}_{>0}$ and $m=\beta $.
This exponent lies in $M$ because this equation implies that
$\chi^{m} \in \mathbb{R}(x_{1},\dots, x_{n})$.
\end{proof}

\begin{example}
  \label{exm:3}
  The Laurent polynomial associated to the  identity map on $\mathbb{P}^{n}$ is
  \begin{displaymath}
    p_{\FS}=1+\sum_{i=1}^{n}x_{i}.
  \end{displaymath}
  We have $ \delta(p_{\FS})= (\prod_{i=1}^{n} x_{i} ) \,
  p_{\FS}^{-(n+1)}$ as it can be checked by  a direct
  computation or applying Lemma \ref{lem:4} below.  Using Proposition
  \ref{prop:6} this confirms that the Fubini-Study form
  $\omega_{\FS}$ is K\"ahler-Einstein with constant
  $\lambda=n+1$, in agreement with Example~\ref{exm:1}.
\end{example}

We deduce an algebraic characterization for the existence of a
K\"ahler-Einstein form on $X$ that is induced by a projective
immersion.

\begin{corollary}
  \label{cor:6}
  The following conditions are equivalent:
  \begin{enumerate}[leftmargin=*]
  \item \label{item:14} $X$ admits a projectively
  induced K\"ahler-Einstein form with constant $\lambda>0$, 
\item \label{item:15} there exists $ p \in \mathbb{R}_{>0}[\mathbb{Z}^{n}]$ such
  that $\supp(p)$ is unimodular, $\NP(p)=\lambda^{-1} \Delta_{-K_{X}}$
  and $ \delta(p)=c\chi^{m} p^{-\lambda} $ for some
  $c\in \mathbb{R}_{>0}$ and $m\in \mathbb{Z}^{n}$.
  \end{enumerate}
\end{corollary}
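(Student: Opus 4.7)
The plan is to assemble Lemmas \ref{lem:15} and \ref{lem:14} with Proposition \ref{prop:6} and the polytope identity \eqref{eq:26}, reading the Einstein condition cohomologically.

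For \eqref{item:14} $\Rightarrow$ \eqref{item:15}: I would apply Lemma \ref{lem:15} to replace the given projective immersion realizing $\omega$ by a toric full immersion $\varphi\colon X \to \mathbb{P}^{s}$. Writing $\varphi$ in monomial form via Lemma \ref{lem:14} and taking $p = p_{\varphi}$ from \eqref{eq:23} places us in $\mathbb{R}_{>0}[\mathbb{Z}^{n}]$. Unimodularity of $\supp(p) = \{m_{0},\dots,m_{s}\}$ is condition~\eqref{item:13} of Lemma \ref{lem:14}, and \eqref{eq:26} gives $\NP(p) = \Delta_{L}$ for $L = \varphi^{*}\mathcal{O}(1)$. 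The Einstein equation $\rho = \lambda\omega$, combined with $[\omega] = c_{1}(L)$ and $[\rho] = c_{1}(-K_{X})$, yields $\lambda c_{1}(L) = c_{1}(-K_{X})$; for a smooth compact toric variety this translates into $\lambda\Delta_{L} = \Delta_{-K_{X}}$ up to translation by an element of $M$. I absorb that translation by replacing $p$ by $\chi^{-t}p$ for a suitable $t\in M$, using that $\delta$ is invariant under multiplication by a monomial (since $D^{2}\log\chi^{t} = 0$), so that the resulting $p$ satisfies $\NP(p) = \lambda^{-1}\Delta_{-K_{X}}$ while preserving the other conditions. Proposition \ref{prop:6} then converts the Einstein condition into $\delta(p) = c\chi^{m}p^{-\lambda}$.

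For \eqref{item:15} $\Rightarrow$ \eqref{item:14}: writing $p = \sum_{j=0}^{s}\beta_{j}\chi^{m_{j}}$ with $\beta_{j}>0$, I would set $\alpha_{j} = \beta_{j}^{1/2}$ and consider the toric monomial map $\varphi(x) = (\alpha_{0}\chi^{m_{0}}(x):\cdots:\alpha_{s}\chi^{m_{s}}(x))$ on the principal open subset. Unimodularity of $\supp(p)$, together with $\alpha_{j}\ne 0$ and pairwise distinctness of the $m_{j}$, gives~\eqref{item:13} in Lemma \ref{lem:14}, so $\varphi$ is a full immersion wherever it is defined. The identity $\NP(p) = \lambda^{-1}\Delta_{-K_{X}}$ forces the normal fan of $\NP(p)$ to coincide with $\Sigma_{X}$; clearing denominators, $\NP(p)$ becomes the polytope of an ample line bundle on $X$, and the monomials $\alpha_{j}\chi^{m_{j}}$ extend to global sections without common zeros, so $\varphi$ extends to a morphism on all of~$X$. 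Then $p = p_{\varphi}$ by construction, and Proposition \ref{prop:6} read in reverse turns the given $\delta$-equation into the statement that $\omega = \varphi^{*}\omega_{\FS}$ is K\"ahler--Einstein with constant~$\lambda$.

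The main obstacle I anticipate is the translation bookkeeping in the forward direction: the polytope of a toric line bundle is canonical only up to translation by $M$, whereas \eqref{item:15} pins $\NP(p)$ down exactly. The remedy is the invariance of $\delta$ under multiplication by $\chi^{t}$ noted above. A secondary, more routine point in the converse is verifying that the monomial map defined on $X_{0}$ extends to an immersion on $X$; this is ensured by the combination of unimodularity at each vertex of $\NP(p)$, which yields an immersion near the corresponding torus-fixed point via~\eqref{eq:35}, and the normal fan condition, which provides no base points globally.
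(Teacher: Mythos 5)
Your proposal follows the paper's proof essentially verbatim: the forward direction is Lemma \ref{lem:15} plus Lemma \ref{lem:14} plus Proposition \ref{prop:6}, and the converse extends the monomial map to all of $X$ using that $\lambda^{-1}\Delta_{-K_{X}}$ is compatible with the fan, then invokes Lemma \ref{lem:14} and Proposition \ref{prop:6} again. Your explicit cohomological justification of $\NP(p)=\lambda^{-1}\Delta_{-K_{X}}$ (via $\lambda c_{1}(L)=c_{1}(-K_{X})$ and the monomial translation) is a detail the paper leaves implicit, but it does not change the route.
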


\begin{proof}
  If \eqref{item:14} holds then by Lemmas \ref{lem:14} and
  \ref{lem:15} the considered K\"ahler-Einstein form is induced by a
  toric full immersion, which combined with Proposition \ref{prop:6}
  gives \eqref{item:15}.

  Conversely assume that \eqref{item:15} holds. Write
  \begin{displaymath}
  p=\sum_{j=0}^{s} |\alpha_{j}|^{2}\chi^{m_{j}} \in \mathbb{R}_{>0}[\mathbb{Z}^{n}]
  \end{displaymath}
  with $\alpha_{j}\in \mathbb{C}^{\times}$ and
  $m_{j}\in \mathbb{Z}^{n}$, and consider the monomial map
  $ X_{0}\to \mathbb{P}^{s}$ defined as
  \begin{equation}
    \label{eq:43}
    x\longmapsto (\alpha_{0}\chi^{m_{0}}: \cdots :
  \alpha_{s}\chi^{m_{s}})
  \end{equation}
  The condition $\NP(p)=\lambda^{-1}\Delta_{-K_{X}}$ implies that this
  lattice polytope is compatible with the fan of $X$ because so is the
  case for this anticanonical polytope.  Hence the monomials in
  \eqref{eq:43} extend to global sections of a line bundle on $X$, and
  so this map extends to a toric map
  $\varphi\colon X\to \mathbb{P}^{s}$. Since $\supp(p)$ is unimodular,
  by Lemma \ref{lem:14} it is a toric full immersion with
  $p_{\varphi}=p$. We conclude again with Proposition~\ref{prop:6}.
\end{proof}

\section{Algebraic potentials}
\label{sec:algebraic-potentials}

Motivated by our previous results we now place ourselves in a purely
algebraic setting and study systematically the corresponding
Monge-Ampère operator.  Throughout this section we denote by
$\mathbb{T}$ a torus of dimension $n\ge 1$, by $M$ and $N$ its
lattices of characters and co-characters, and by $M_{\mathbb{R}}$ and
$N_{\mathbb{R}}$ their vector spaces.

\subsection{The algebraic Monge-Ampère operator}
\label{sec:polyn-monge-ampere}

Let
\begin{displaymath}
 p=\sum_{j=0}^{s} c_{j} \chi^{m_{j}} \in \mathbb{C}[\mathbb{Z}^{n}] \setminus\{0\}
\end{displaymath}
with $c_{j}\ne 0$ for all $j$ and $m_{j}\ne m_{k}$ for all $j\ne k$.
With notation as in \eqref{eq:74} we have
\begin{displaymath}
D_{l}  p=\sum_{j=0}^{s} m_{j,l} c_{j} \chi^{m_{j}} \and D_{k}D_{l}p=\sum_{j=0}^{s} m_{j,k}m_{j,l} c_{j} \chi^{m_{j}} \quad \text{ for each } k, l.   
\end{displaymath}
Moreover $ D_{l}\log(p)= {D_{l}p}/{p}$ and
$D_{k}D_{l}\log(p)=(((D_{k}D_{l}p)\, p-(D_{k}p) (D_{l}p))/{p^{2}}$ and
so
\begin{displaymath}
  \delta(p)  = \frac{\det ((D_{k}D_{l}p)\, p-(D_{k}p) (D_{l}p))_{k,l}}{p^{2 n}} \in \mathbb{C}(\mathbb{Z}^{n}).
\end{displaymath}
Hence $p^{2n}$ is a denominator for the rational function $\delta(p)$.
The next lemma gives a smaller one and a formula for the corresponding
numerator.

\begin{lemma}
\label{lem:4} 
We have 
  \begin{displaymath}
    p^{n+1} \delta(p)= \sum_{J} \big( n! \vol_{\mathbb{Z}^{n}}(\conv(\{m_{j}\}_{j\in J}))\big)^{2}
    \prod_{j\in J} c_{j} \chi^{m_{j}} \in \mathbb{C}[\mathbb{Z}^{n}],
\end{displaymath}
where the sum is over the subsets $J \subset \{0,\dots, s\}$ of
cardinality $n+1$ and $\vol_{\mathbb{Z}^{n}}$ denotes the Lebesgue
measure of $\mathbb{R}^{n}$.
\end{lemma}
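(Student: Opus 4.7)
The plan is to express $p^{n+1}\delta(p)$ as the determinant of an $(n+1)\times(n+1)$ bordered matrix that decomposes as a sum of rank-one matrices, and then to apply the Cauchy--Binet formula.

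\textbf{Step 1: Bordering identity.} First I would introduce the bordered Hessian
\begin{displaymath}
  B=
  \begin{pmatrix}
    p & D_{1}p & \cdots & D_{n}p\\
    D_{1}p & D_{1}D_{1}p & \cdots & D_{1}D_{n}p\\
    \vdots & \vdots & & \vdots \\
    D_{n}p & D_{n}D_{1}p & \cdots & D_{n}D_{n}p
  \end{pmatrix}
\end{displaymath}
and show that $p^{n+1}\delta(p)=\det(B)$. This follows from the Schur complement identity: writing $\mathbf{v}=(D_{k}p)_{k}$ and $A=(D_{k}D_{l}p)_{k,l}$ we have $\det(B)=p\det(A-\mathbf{v}\mathbf{v}^{T}/p)=\det(pA-\mathbf{v}\mathbf{v}^{T})/p^{n-1}$, and the numerator equals $p^{2n}\delta(p)$ by the explicit expression of $\delta(p)$ recalled just before the statement.

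\textbf{Step 2: Rank-one decomposition.} For each $j$ set $\widetilde{m}_{j}=(1,m_{j,1},\dots,m_{j,n})\in\mathbb{Z}^{n+1}$. Using $p=\sum_{j}c_{j}\chi^{m_{j}}$, $D_{l}p=\sum_{j}m_{j,l}c_{j}\chi^{m_{j}}$, and $D_{k}D_{l}p=\sum_{j}m_{j,k}m_{j,l}c_{j}\chi^{m_{j}}$, one verifies entrywise that
\begin{displaymath}
  B \;=\; \sum_{j=0}^{s} c_{j}\chi^{m_{j}}\,\widetilde{m}_{j}\,\widetilde{m}_{j}^{\,T}.
\end{displaymath}
Equivalently, $B=VDV^{T}$ where $V$ is the $(n+1)\times(s+1)$ matrix with columns $\widetilde{m}_{j}$ and $D$ is the diagonal matrix with entries $c_{j}\chi^{m_{j}}$.

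\textbf{Step 3: Cauchy--Binet.} Applying the Cauchy--Binet formula to $VDV^{T}$ yields
\begin{displaymath}
  \det(B)=\sum_{J} \det(V_{J})^{2}\prod_{j\in J} c_{j}\chi^{m_{j}},
\end{displaymath}
the sum being over subsets $J\subset\{0,\dots,s\}$ of cardinality $n+1$ and $V_{J}$ the corresponding $(n+1)\times(n+1)$ submatrix of $V$. Since $V_{J}$ has rows $(1,m_{j,k})_{j\in J}$, the classical formula for the lattice volume of a simplex gives $|\det(V_{J})|=n!\,\vol_{\mathbb{Z}^{n}}(\conv(\{m_{j}\}_{j\in J}))$. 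Combining with Step~1 yields the claimed identity.

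\textbf{Expected obstacle.} The computation is essentially mechanical; the only non-routine point is recognizing the rank-one decomposition in Step~2, which is the key observation that makes Cauchy--Binet applicable and simultaneously forces the $(n+1)$-element index sets. A minor technicality is checking that the Schur complement identity of Step~1 is valid over the ring $\mathbb{C}(\mathbb{Z}^{n})$ (one simply works in the fraction field, where $p$ is a unit), which then shows a posteriori that $p^{n+1}\delta(p)$ lies in $\mathbb{C}[\mathbb{Z}^{n}]$ as asserted.
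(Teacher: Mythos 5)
Your proof is correct and follows essentially the same route as the paper: both identify $p^{n+1}\delta(p)$ with the determinant of the bordered $(n+1)\times(n+1)$ matrix $\widetilde E K\widetilde E^{T}$ (your $VDV^{T}$) and conclude by Cauchy--Binet together with the simplex-volume interpretation of $\det(V_{J})$. The only difference is cosmetic: you obtain the identity $\det(B)=p^{n+1}\delta(p)$ via the Schur complement, whereas the paper expands $\det(pEKE^{T}-ww^{T})$ by multilinearity and recognizes the cofactor expansion of the bordered determinant.
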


\begin{proof}
  For each $k,l$ we have that $(D_{k}D_{l}p)\, p-(D_{k}p) (D_{l}p)$ equals
  \begin{displaymath}    
\Big( \sum_{i=0}^{s} m_{i,k}m_{i,l} c_{i}\chi^{m_{i}}\Big) \Big( \sum_{j=0}^{s}  c_{j}\chi^{m_{j}}\Big) -
  \Big( \sum_{i=0}^{s}m_{i,k} c_{i}\chi^{m_{i}}\Big)\Big( \sum_{j=0}^{s}m_{j,l} c_{j}\chi^{m_{j }}\Big).
\end{displaymath}
Set
\begin{displaymath}
  E=
 (m_{0}\cdots m_{s}),\quad
  K=\diag( c_{0}\chi^{m_{0}},\dots, c_{s}\chi^{m_{s}})  , \quad
 w=   \sum_{j=0}^{s}m_{j} c_{j}\chi^{m_{j}}.
\end{displaymath}
Then $E$ is an $n\times (s+1)$ matrix, $K$ a diagonal
$(s+1)\times(s+1)$ matrix and $ w$ an $n$~vector that can be use to
write the $D$-Hessian matrix of $\log(p)$ as
\begin{displaymath}
p^{2} D^{2}\log(p)=p \, E  K  E^{T}-  w  \, w^{T}.
\end{displaymath}
Write $E K E^{T} = (a_{1} \cdots a_{n}) $ and
$ w=( w_{1} \cdots w_{n})^{T}$ where each $a_{j}$ is an $n$ vector and
each $ w_{j}$ a scalar. Then
\begin{displaymath}
p \,E  K  E^{T}  - w  \, w^{T}= (p a_{1}- w_{1}   w, \dots, p a_{n} -  w_{n}  w).
\end{displaymath}
By the multilinearity of the determinant function we get
\begin{displaymath}
p^{2n} \det(  D^{2}\log(p))=\det(p E  K  E^{T}) -\sum_{k=1}^{n}  w_{k}\det(p\, a_{1},\dots, p\, a_{k-1}, w,p\, a_{k+1},\dots,p\, a_{n}),
\end{displaymath}
which readily implies 
\begin{equation}
  \label{eq:68}
p^{n+1}\delta(p)=p \det(E  K  E^{T}) -\sum_{k=1}^{n}  w_{k}\det( a_{1},\dots, a_{k-1}, w, a_{k+1},\dots, a_{n}).
\end{equation}
Now consider the $(n+1)\times (s+1)$-matrix $\widetilde E=
\Big(\begin{matrix}
  E\\
  \boldsymbol{1}^{T}
\end{matrix} \Big)
=
\Big(\begin{matrix}
  m_{0} & \cdots & m_{s}\\
  1 & \cdots & 1
\end{matrix}\Big)
$.  We have
\begin{displaymath}
  \widetilde E K \widetilde E^{T}=
  \Big( \begin{matrix}
  E\\
  \boldsymbol{1}^{T}
\end{matrix}\Big)
\, K \,  ( E^{T}  \,  \boldsymbol{1})
=
\Big(\begin{matrix}
  E K E^{T}&  EK\, \boldsymbol{1} \\
 \boldsymbol{1}^{T}K E^{T} & \boldsymbol{1}^{T}K \, \boldsymbol{1}  
     \end{matrix}
     \Big)
=
\Big(\begin{matrix}
  E K E^{T}&  w\\
 w^{T} & p  
     \end{matrix}
     \Big).
\end{displaymath}
Developing the determinant of this matrix by the last row we deduce
that it coincides with the right hand side of \eqref{eq:68}. By the
Cauchy-Binet formula for the determinant of a product of rectangular
matrices we have
\begin{displaymath}
  p^{n+1}\delta(p)=\det(  \widetilde E K \widetilde E^{T})= \sum_{J}
\det((\widetilde E K)_{J}) \det (\widetilde E^{T}_{J})  =\sum_{J}
\det(\widetilde E_{J})^{2} \det( K_{J}),
\end{displaymath}
the sum being over the subsets $J\subset \{0,\dots,s\}$ of cardinality $n+1$.
Finally for each $J$ 
\begin{displaymath}
  \det(\widetilde E_{J})=\pm n! \vol_{\mathbb{Z}^{n}}(\conv(\{m_{j}\}_{j\in J})) \and
  \det( K_{J}) = \prod_{j\in J}^{n} c_{j} \chi^{m_{j}},
\end{displaymath}
which gives the formula.
\end{proof}

\begin{remark}
  \label{rem:4}
  Write $p_{J}=\sum_{j\in J}c_{j}\chi^{m_{j}}$ for each
  $J\subset \{0,\dots, s\}$ with $\# J=n+1$. Then Lemma \ref{lem:4}
  gives the decomposition
  \begin{displaymath}
    p^{n+1}\delta(p)=\sum_{J} p_{J}^{n+1}\delta(p_{J}).
  \end{displaymath}
  This lemma also shows that the $\delta$-operator is invariant with
  respect to unimodular changes of coordinates of $\mathbb{Z}^{n}$.
\end{remark}

In the $1$-dimensional case we can compute the $\delta$-operator of a
Laurent polynomial from a complete factorization of it.

\begin{lemma}
\label{lem:7}
  Let
  $p=c \, \chi^{m} \prod_{k=1}^{t} (\chi^{1}+\xi_{k})^{e_{k}} \in
  \mathbb{C}[\mathbb{Z}]\setminus \{0\}$ with $c\in \mathbb{C}^{\times}$,  $e_{k}\in \mathbb{Z}_{>0}$
  and $\xi_{k}\in \mathbb{C}^{\times}$ such that $\xi_{k} \ne \xi_{l}$
  for all $k\ne l$. Then
 \begin{displaymath}
    \delta(p)= 
    \sum_{k=1}^{t}\frac{e_{k}\xi_{k}\chi^{1}}{(\chi^{1}+\xi_{k})^{2}}.
  \end{displaymath}
\end{lemma}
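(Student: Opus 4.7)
The plan is to compute $\delta(p)$ directly from the factorization by exploiting that in dimension $n=1$ one has $\delta(p) = D^{2}\log(p)$ with $D = \chi^{1}\,\partial/\partial\chi^{1}$, together with the fact that $\log$ turns products into sums inside the differential extension $\mathbb{L}$.

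First I would write, inside $\mathbb{L}$,
\[
\log(p) = \log(c) + m\log(\chi^{1}) + \sum_{k=1}^{t} e_{k}\log(\chi^{1}+\xi_{k}),
\]
and notice that the first two summands contribute nothing to $D^{2}\log(p)$: indeed $D\log(c)=0$, while $D\log(\chi^{1}) = \chi^{1}\cdot(\chi^{1})^{-1} = 1$, so a second application of $D$ kills this term as well. Hence the problem reduces to showing that, for each $k$,
\[
D^{2}\log(\chi^{1}+\xi_{k}) = \frac{\xi_{k}\,\chi^{1}}{(\chi^{1}+\xi_{k})^{2}},
\]
after which the stated identity follows by linearity and summation with the multiplicities $e_{k}$.

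For one factor I would compute $D\log(\chi^{1}+\xi_{k}) = \chi^{1}/(\chi^{1}+\xi_{k})$ via the chain rule, and then apply $D$ a second time using the quotient rule:
\[
D\!\left(\frac{\chi^{1}}{\chi^{1}+\xi_{k}}\right) = \chi^{1}\cdot\frac{(\chi^{1}+\xi_{k})-\chi^{1}}{(\chi^{1}+\xi_{k})^{2}} = \frac{\xi_{k}\,\chi^{1}}{(\chi^{1}+\xi_{k})^{2}}.
\]

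The calculation is entirely mechanical and there is no real obstacle. The only subtlety worth pointing out is that the monomial prefactor $c\,\chi^{m}$ disappears after two applications of $D$ to its logarithm, as observed above; the assumptions that $c$ and the $\xi_{k}$ are nonzero and that the $\xi_{k}$ are pairwise distinct only serve to ensure that all expressions in sight lie in $\mathbb{C}(\mathbb{Z})$. Note moreover that the hypothesis of distinctness of the $\xi_{k}$ is not actually needed for the formula to hold — grouping equal roots only multiplies their exponents — but it is natural to impose it so that the factorization of $p$ is unique.
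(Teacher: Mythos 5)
Your proof is correct and follows essentially the same route as the paper: both decompose $\log(p)$ additively over the factorization and reduce to the single binomial $\chi^{1}+\xi_{k}$, the only difference being that the paper evaluates $\delta(\chi^{1}+\xi_{k})$ by invoking its Cauchy--Binet formula (Lemma~\ref{lem:4}) while you differentiate directly, which is an equally valid and entirely mechanical step.
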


\begin{proof}
  We have
  \begin{displaymath}
    \delta(p)=D^{2}\log(p)= D^{2}\log(c \chi^{m})+\sum_{k=1}^{t} e_{k} D^{2}\log(\chi^{1}+\xi_{k})= \delta(c \chi^{m})+\sum_{k=1}^{t} e_{k} \,\delta(\chi^{1}+\xi_{k}).
  \end{displaymath}
  Then we get the stated formula applying Lemma \ref{lem:4} to each of
  these terms.
\end{proof}

The $\delta$-operator also satisfies the following properties.

\begin{lemma}
  \label{lem:5}
Let $p\in \mathbb{C}[\mathbb{Z}^{n}]\setminus \{0\}$. Then
  \begin{enumerate}[leftmargin=*]
  \item \label{item:26} for all $c\in \mathbb{C}^{\times}$ and
    $m\in \mathbb{Z}^{n}$ we have
    $\delta(c \chi^{m} p)= \delta(p)$,
  \item \label{item:24} for all $\lambda\in \mathbb{Q}_{> 0}$ such that $p^{\lambda}\in  \mathbb{C}[\mathbb{Z}^{n}]$ we have 
    $\delta(p^{\lambda})=\lambda^{n}\delta(p)$,
  \item \label{item:25} for all
    $q\in \mathbb{C}[\mathbb{Z}^{l}]\setminus \{0\}$ we have
    \begin{math}
     \delta_{\mathbb{Z}^{n+l}}(p \, q)= \delta_{\mathbb{Z}^{n}}(p) \, \delta_{\mathbb{Z}^{l}}(q).
    \end{math}
  \end{enumerate}
\end{lemma}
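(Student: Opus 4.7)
My plan is to derive all three parts directly from the definition $\delta(p) = \det(D^2\log(p))$, working inside the differential extension $\mathbb{L}$ of $\mathbb{C}[\mathbb{Z}^n]$ where logarithms of nonzero Laurent polynomials are available. The key mechanism is that the identities $\log(fg) \equiv \log(f) + \log(g)$ and $\log(f^\lambda) \equiv \lambda\log(f)$ hold in $\mathbb{L}$ up to additive constants, and additive constants are annihilated by $D_k D_l$; hence the $D$-Hessian matrix transforms in a controlled way under multiplication by $c\chi^m$, under extraction of $\lambda$-th powers, and under the external product $p\cdot q$.

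For \eqref{item:26} I will expand $\log(c\chi^m p) = \log(c) + \sum_{i=1}^n m_i\log(x_i) + \log(p)$ in $\mathbb{L}$. Since $D_l\log(c) = 0$ and $D_l\bigl(\sum_i m_i\log(x_i)\bigr) = m_l$ is a constant, a further application of $D_k$ annihilates these two terms, so $D^2\log(c\chi^m p) = D^2\log(p)$ and taking determinants gives the claim. For \eqref{item:24} the identity $\log(p^\lambda) = \lambda\log(p)$ yields $D^2\log(p^\lambda) = \lambda\, D^2\log(p)$, and the $n\times n$ determinant produces the factor $\lambda^n$.

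For \eqref{item:25} I will identify $\mathbb{C}[\mathbb{Z}^{n+l}]$ with $\mathbb{C}[\mathbb{Z}^n]\otimes_{\mathbb{C}}\mathbb{C}[\mathbb{Z}^l]$, so that the logarithmic derivations $D_1,\dots,D_n$ annihilate $q$ while $D_{n+1},\dots,D_{n+l}$ annihilate $p$. Then $\log(pq) = \log(p)+\log(q)$ and the cross second derivatives vanish, so the $(n+l)\times(n+l)$ $D$-Hessian of $\log(pq)$ is block-diagonal with blocks $D^2\log(p)$ and $D^2\log(q)$; the product-of-determinants formula gives $\delta_{\mathbb{Z}^{n+l}}(pq) = \delta_{\mathbb{Z}^n}(p)\,\delta_{\mathbb{Z}^l}(q)$. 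The argument is essentially formal and I do not expect any serious obstacle; the only subtlety is to keep track of the fact that the log identities in $\mathbb{L}$ hold only modulo additive constants, but this is harmless since such constants disappear once the second derivations are applied.
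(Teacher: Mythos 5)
Your proof is correct and follows essentially the same route as the paper's: additivity of $\log$ in $\mathbb{L}$, the vanishing of $D^{2}\log(c\chi^{m})$, the scaling $D^{2}\log(p^{\lambda})=\lambda\,D^{2}\log(p)$, and the block-diagonal Hessian for the external product, followed by taking determinants. No gaps.
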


\begin{proof}
We have 
  \begin{displaymath}
D^{2}\log(c  \chi^{m} p)=     
   D^{2}\log(c  \chi^{m}) +D^{2}\log(p) =     
   D^{2}\log(p) \and D^{2}\log(p^{\lambda})= \lambda \, D^{2}\log(p).
 \end{displaymath}
 Taking the determinants of these matrices gives the formulae in
 \eqref{item:26} and \eqref{item:24}. For~\eqref{item:25} it holds
\begin{displaymath}
  D^{2}\log(p\,q)=  D^{2}\log(p)+D^{2}\log(q)=
  \begin{pmatrix}
D^{2}\log(p) & 0\\
    0 & D^{2}\log(q)
  \end{pmatrix}
\end{displaymath}
for the $D$-Hessian matrices on $\mathbb{C}[\mathbb{Z}^{n+l}]$,
$\mathbb{C}[\mathbb{Z}^{n}]$ and $\mathbb{C}[\mathbb{Z}^{l}]$, and the
formula follows again taking determinants.
\end{proof}

\subsection{A coordinate-free setting}
\label{sec:coord-free-sett}

It will be convenient to avoid an specific choice of coordinates. Let then 
\begin{displaymath}
  p=\sum_{j=0}^{s} c_{j} \chi^{m_{j}} \in \mathbb{C}[M] \setminus\{0\}
\end{displaymath}
with $c_{j}\ne 0$ for all $j$ and $m_{j}\ne m_{k}$ for all $j\ne k$.
Set 
\begin{displaymath}
  \supp(p)=\{m_{0},\dots, m_{s}\} \subset M \and \NP(p)=\conv(\supp(p)) \subset M_{\mathbb{R}}
\end{displaymath}
for the \emph{support} and the \emph{Newton polytope} of this
Laurent~polynomial, and  consider also the associated vector space and
lattice, respectively defined as
\begin{displaymath}
M_{p,\mathbb{R}}= \sum_{j,k} \mathbb{R} (m_{j}-m_{k})
  \subset M_{\mathbb{R}} \and M_{p}= M_{p,\mathbb{R}} \cap M \subset M.
\end{displaymath}
We  have that $M_{p}$ is a subgroup
of $ M_{p,\mathbb{R}}$ with compact quotient, and we denote by
$\vol_{M_{p}}$ the Haar measure on this vector space normalized so
that
\begin{displaymath}
  \vol_{M_{p}}( M_{p,\mathbb{R}}/ M_{p})=1.
\end{displaymath}
Note that $\supp(\chi^{-m_{0}}p)\subset M_{p}$ and
$\rank(M_{p})=\dim(M_{p,\mathbb{R}}=\dim(\NP(p))$.

We introduce a polynomial variant of the $\delta$-operator that
contains its nontrivial information and  gives a meaningful
extension of this operator to the rank-deficient~case. 

\begin{definition}[$\mu$-operator]
  \label{def:5}
For $p\in \mathbb{C}[M] \setminus \{0\}$ we set
  \begin{displaymath}
  \mu(p)=p^{\rank( M_{p})+1}\delta_{M_{p}}(\chi^{-m_{0}}p) \in \mathbb{C}[M_{p}] \subset \mathbb{C}[M],
  \end{displaymath}
  where $\delta_{M_{p}}$ denotes the $\delta$-operator acting
  on the algebra $\mathbb{C}[M_{p}]$.
\end{definition}

By Remark \ref{rem:4} the $\delta$-operator does not depend on the
choice of a basis of the lattice, and by Lemma
\ref{lem:5}\eqref{item:26} it is invariant by translations of
exponents.  Hence the formula in Definition \ref{def:5} produces a
well-defined rational function, which by Lemma~\ref{lem:4} is a
Laurent polynomial.

We restate in terms of $\mu$-operator the formula given by this latter
result.

\begin{lemma}
  \label{lem:3}
Set $r=\rank(M_{p})$. Then
  \begin{displaymath}
    \mu(p)=     \sum_{J} ( r! \vol_{M_{p}}(\conv(\{m_{j}\}_{j\in J})))^{2}
    \Big(\prod_{j\in J}c_{j} \chi^{m_{j}}\Big)  ,
  \end{displaymath}
  the sum being over the subsets $J\subset \{0,\dots, s\}$ of cardinality 
  $r+1$.
\end{lemma}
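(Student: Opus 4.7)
\medskip

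\noindent\textbf{Proof plan.} The statement is essentially Lemma~\ref{lem:4} applied to the translated polynomial $q = \chi^{-m_{0}}p$, once we interpret everything inside the sublattice $M_{p}$. Here is how I would organize the argument.

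First I would reduce to Lemma~\ref{lem:4}. Set $r=\rank(M_{p})$ and $q=\chi^{-m_{0}}p=\sum_{j=0}^{s}c_{j}\chi^{m_{j}-m_{0}}$. Since the differences $m_{j}-m_{0}$ lie in $M_{p}$ by definition, we have $q\in\mathbb{C}[M_{p}]$. Choose a basis of $M_{p}$ to identify $\mathbb{C}[M_{p}]\simeq\mathbb{C}[\mathbb{Z}^{r}]$; by Remark~\ref{rem:4} the $\delta$-operator does not depend on this choice, and under this identification the normalized Haar measure $\vol_{M_{p}}$ corresponds to $\vol_{\mathbb{Z}^{r}}$. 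Applying Lemma~\ref{lem:4} to $q$ inside $\mathbb{C}[\mathbb{Z}^{r}]$ yields
\begin{displaymath}
  q^{r+1}\delta_{M_{p}}(q)=\sum_{J}\bigl(r!\,\vol_{M_{p}}(\conv(\{m_{j}-m_{0}\}_{j\in J}))\bigr)^{2}\prod_{j\in J}c_{j}\chi^{m_{j}-m_{0}},
\end{displaymath}
where $J$ ranges over subsets of $\{0,\dots,s\}$ of cardinality $r+1$ (terms with affinely dependent exponents contribute zero and may be harmlessly included).

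Next I would multiply through by $\chi^{(r+1)m_{0}}$ to pass from $q$ back to $p$. On the left hand side $\chi^{(r+1)m_{0}}q^{r+1}=p^{r+1}$, so by the definition of the $\mu$-operator (Definition~\ref{def:5}) this side becomes $p^{r+1}\delta_{M_{p}}(\chi^{-m_{0}}p)=\mu(p)$. On the right hand side, for each fixed $J$ of cardinality $r+1$ one has
\begin{displaymath}
\chi^{(r+1)m_{0}}\prod_{j\in J}c_{j}\chi^{m_{j}-m_{0}}=\Bigl(\prod_{j\in J}c_{j}\Bigr)\chi^{\sum_{j\in J}m_{j}}=\prod_{j\in J}c_{j}\chi^{m_{j}},
\end{displaymath}
since $\#J=r+1$. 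Finally, translation invariance of the Lebesgue measure gives $\vol_{M_{p}}(\conv(\{m_{j}-m_{0}\}_{j\in J}))=\vol_{M_{p}}(\conv(\{m_{j}\}_{j\in J}))$, and combining everything yields the desired formula.

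There is no real obstacle here; the only points requiring a moment of care are the coordinate-free interpretation (checking that both the $\delta_{M_{p}}$-operator and the volume form $\vol_{M_{p}}$ are invariant under unimodular changes of basis, which is already recorded in Remark~\ref{rem:4} and the normalization of $\vol_{M_{p}}$) and the bookkeeping of the monomial factor $\chi^{(r+1)m_{0}}$ that absorbs the translation by $-m_{0}$ on both sides.
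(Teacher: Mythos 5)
Your argument is correct and is exactly the route the paper intends: Lemma~\ref{lem:3} is stated there as an immediate restatement of Lemma~\ref{lem:4} applied to $\chi^{-m_{0}}p$ inside $M_{p}$, with the basis-independence of $\delta$ (Remark~\ref{rem:4}) and the translation bookkeeping left implicit. Your write-up just makes those steps explicit, including the harmless inclusion of degenerate $J$'s and the cancellation of the monomial factor $\chi^{(r+1)m_{0}}$.
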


We also restate in our current setting both the formula for the
$1$-dimensional case and the basic properties of the
$\delta$-operator.

\begin{lemma}
  \label{lem:12}
  Let
  $p=c \, \chi^{m} \prod_{k=1}^{t} (\chi^{1}+\xi_{k})^{e_{k}} \in
  \mathbb{C}[\mathbb{Z}]\setminus \mathbb{C}$ with $e_{k}\in \mathbb{Z}_{>0}$
  and $\xi_{k}\in \mathbb{C}^{\times}$ such that $\xi_{k} \ne \xi_{l}$
  for all $k\ne l$. Then
  \begin{displaymath}
    \mu(p)=c^{2}\chi^{2m+1} \sum_{k=1}^{t} e_{k}\xi_{k} (\chi^{1}+\xi_{k})^{2e_{k}-2} \prod_{l\ne k}(\chi^{1}+\xi_{l})^{2e_{l}}.
  \end{displaymath}
\end{lemma}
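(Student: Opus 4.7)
The plan is to reduce the formula for $\mu(p)$ to a direct consequence of Lemma \ref{lem:7} combined with the translation-invariance of the $\delta$-operator. Since $p \notin \mathbb{C}$, the support of $p$ contains more than one element, so $M_{p}$ has rank $1$. Hence by Definition \ref{def:5} we have
\begin{displaymath}
\mu(p) = p^{2}\, \delta_{M_{p}}(\chi^{-m_{0}}p),
\end{displaymath}
and by Lemma \ref{lem:5}\eqref{item:26} the translation-invariance gives $\delta_{M_{p}}(\chi^{-m_{0}}p) = \delta(p)$. So the task reduces to computing $p^{2}\,\delta(p)$.

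Next, I would plug in the factorization formula from Lemma \ref{lem:7}, namely
\begin{displaymath}
\delta(p) = \sum_{k=1}^{t} \frac{e_{k}\xi_{k}\chi^{1}}{(\chi^{1}+\xi_{k})^{2}},
\end{displaymath}
together with
\begin{displaymath}
p^{2} = c^{2} \chi^{2m} \prod_{l=1}^{t} (\chi^{1}+\xi_{l})^{2e_{l}}.
\end{displaymath}
Multiplying the sum by this product and cancelling the factor $(\chi^{1}+\xi_{k})^{2}$ in each summand against two factors of the product yields exactly
\begin{displaymath}
\mu(p) = c^{2}\chi^{2m+1}\sum_{k=1}^{t} e_{k}\xi_{k} (\chi^{1}+\xi_{k})^{2e_{k}-2}\prod_{l\ne k}(\chi^{1}+\xi_{l})^{2e_{l}},
\end{displaymath}
as desired. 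There is no serious obstacle here; the entire argument is bookkeeping, and the only subtlety is invoking Lemma \ref{lem:5}\eqref{item:26} to justify dropping the monomial prefactor $c\chi^{m}$ when computing $\delta$.
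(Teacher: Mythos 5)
Your proof is correct and is essentially the paper's own argument: the paper's proof is the one-line observation that the formula follows from Lemma \ref{lem:7} since here $\mu(p)=p^{2}\delta(p)$, and you simply spell out the same multiplication and cancellation explicitly.
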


\begin{proof}
  This follows directly from Lemma \ref{lem:7} noting that here
  $ \mu(p)=p^{2} \delta(p)$.
\end{proof}
  
  \begin{lemma}
    \label{lem:6} Let $p\in \mathbb{C}[M]$ and set
    $r=\rank( M_{p})$.  Then
  \begin{enumerate}[leftmargin=*]
  \item \label{item:1} for all $c\in \mathbb{C}^{\times}$ and
    $m\in M$ we have
    $\mu(c \chi^{m} p)= c^{r+1} \chi^{(r+1)m} \mu(p)$,
  \item \label{item:2} for all $\lambda\in \mathbb{Q}_{>0}$ such that
    $p^{\lambda}\in \mathbb{C}[M]$ we have
    $\mu(p^{\lambda})=\lambda^{r} p^{(r+1)(\lambda-1)} \mu(p)$,
  \item \label{item:3} for another lattice $M'$ of rank $r'$ and
    $q\in \mathbb{C}[M']$ we have
    $\mu(p\, q)= p^{r'}q^{r}\mu(p)\, \mu(q)$.
  \end{enumerate}    
  \end{lemma}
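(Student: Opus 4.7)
The plan is to derive each of the three identities from the corresponding property of the $\delta$-operator in Lemma~\ref{lem:5}, after unravelling the definition $\mu(p)=p^{r+1}\delta_{M_{p}}(\chi^{-m_{0}}p)$ and controlling how the lattice $M_{p}$ behaves under each operation. For \eqref{item:1} the first step is the elementary observation that translating $p$ by $c\chi^{m}$ leaves the set of differences of exponents unchanged, hence $M_{c\chi^{m}p}=M_{p}$ and the rank is preserved. One then writes $\mu(c\chi^{m}p)=(c\chi^{m}p)^{r+1}\delta_{M_{p}}(c\chi^{-m_{0}}p)$ and applies Lemma~\ref{lem:5}\eqref{item:26} to eliminate the internal scalar and monomial factors inside~$\delta$, so that $c^{r+1}\chi^{(r+1)m}\mu(p)$ emerges cleanly after collecting the remaining terms.

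For \eqref{item:2} I would first verify that $M_{p^{\lambda}}=M_{p}$: writing $\lambda=a/b$ with $a,b\in\mathbb{Z}_{>0}$ and using the identity $(p^{\lambda})^{b}=p^{a}$, one sees that the differences of exponents of $p^{\lambda}$ span the same real vector space as those of $p$, so the associated lattices coincide and $r$ is unchanged. It is then immediate to rewrite $\chi^{-\lambda m_{0}}p^{\lambda}=(\chi^{-m_{0}}p)^{\lambda}$ and apply Lemma~\ref{lem:5}\eqref{item:24} to pull out a factor of $\lambda^{r}$; rearranging the resulting powers of $p$ gives the exponent $(r+1)(\lambda-1)$ stated in the claim. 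For \eqref{item:3} I regard $pq$ as an element of $\mathbb{C}[M\oplus M']$ and note that the differences $(m_{j}+m'_{k})-(m_{i}+m'_{l})=(m_{j}-m_{i})+(m'_{k}-m'_{l})$ decompose along the two summands, so $M_{pq}=M_{p}\oplus M_{q}$ has rank $r+r'$. After a monomial translation to move the base exponents of $p$ and $q$ to zero, Lemma~\ref{lem:5}\eqref{item:25} factors the $\delta$-term as $\delta_{M_{p}}(\tilde p)\,\delta_{M_{q}}(\tilde q)$, and splitting $(pq)^{r+r'+1}=p^{r'}\cdot p^{r+1}\cdot q^{r}\cdot q^{r'+1}$ isolates $\mu(p)\mu(q)$ and leaves the correction factor $p^{r'}q^{r}$.

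I do not foresee any serious obstacle: once the three lattice identifications $M_{c\chi^{m}p}=M_{p}$, $M_{p^{\lambda}}=M_{p}$ and $M_{pq}=M_{p}\oplus M_{q}$ are in place, the identities for $\mu$ are formal consequences of those for $\delta$ combined with the definition of $\mu$. The only mildly delicate point is verifying $M_{p^{\lambda}}=M_{p}$ when $\lambda$ is a rational such that $p^{\lambda}\in\mathbb{C}[M]$, which is handled by the reduction to integer powers described above.
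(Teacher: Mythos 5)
Your proof is correct and takes essentially the same route as the paper: all three identities are reduced to the corresponding properties of the $\delta$-operator in Lemma~\ref{lem:5} via the definition of $\mu$, the paper simply compressing your lattice identifications $M_{c\chi^{m}p}=M_{p}$, $M_{p^{\lambda}}=M_{p}$ and $M_{pq}=M_{p}\oplus M_{q}$ into the remark that one may assume $M_{p}=\mathbb{Z}^{n}$. Your extra care with these identifications (and with the reduction of $p^{\lambda}$ to integer powers) is a harmless elaboration of the same argument.
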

  
\begin{proof}
  It is enough to prove these properties when $M_{p}=\mathbb{Z}^n$, in
  which case they are a direct consequence of Lemma \ref{lem:5} and
  the fact that $\mu(p)=p^{n+1}\delta(p)$. For \eqref{item:1} we have
  \begin{displaymath}
    \mu(c \chi^{m}p)= (c \chi^{m} p)^{n+1} \delta(c \chi^{m} p)= (c \chi^{m} p)^{n+1} \delta( p)
        = c^{n+1} \chi^{(n+1)m}  \mu(p).
      \end{displaymath}
      For \eqref{item:2} we have
\begin{displaymath}
  \mu(p^{\lambda})=(p^{\lambda})^{n+1} \delta(p^{\lambda})= \lambda^{n}p^{(n+1)\lambda} \delta(p)
  =\lambda^{n} p^{(n+1)(\lambda-1)} \mu(p).
\end{displaymath}
Finally for \eqref{item:3} let $q\in \mathbb{C}[\mathbb{Z}^{l}]
$. Then
 \begin{displaymath}
   \mu(p\, q)= (p\, q)^{n+l+1}\delta_{\mathbb{Z}^{n+l}}(p\, q)= (p\, q)^{n+l+1}\delta_{\mathbb{Z}^{n}}(p)\, \delta_{\mathbb{Z}^{l}}( q)=p^{l}q^{n}\mu(p)\,\mu(q),
 \end{displaymath}
 completing these verifications.
\end{proof}

\subsection{Newton polytopes}
\label{sec:newton-polytopes}

Now fix 
\begin{displaymath}
  p=\sum_{j=0}^{s}c_{j}\chi^{m_{j}}\in \mathbb{C}[M]
\end{displaymath}
such that $\supp(p)$ is a unimodular subset of $M$
(Definition~\ref{def:11}).

\begin{definition}
  \label{def:1}
For each  face $F\preceq \NP(p)$, the \emph{angle} of $\NP(p)$ at $F$ is
the cone in $N_{\mathbb{R}}$ defined as
\begin{displaymath}
  \sigma_{F}=\{u\in N_{\mathbb{R}}\mid \langle u,x-y\rangle \ge   0 \text{ for all } x\in \NP(p) \text{ and } y\in F\}.
\end{displaymath}
The collection
\begin{math}
  \Sigma_{\NP(p)}=\{\sigma_{F} \mid F \preceq \NP(p)\}
\end{math}
is a complete and unimodular fan on $N_{\mathbb{R}}$, called the
\emph{normal fan} of~$\NP(p)$.
\end{definition}

The next result gives the behavior of the Newton polytope with respect
to the $\mu$-operator.  Set $\Sigma=\Sigma_{\NP(p)}$ for short. Recall
that $\Sigma^{1}$ denotes  the set of rays of this fan and 
$u_{\tau} \in N$  the smallest nonzero lattice vector in each ray
$\tau$.

\begin{theorem}
  \label{thm:4}
Let  $ p\in  \mathbb{C}[M]$ with  $\supp(p)$  unimodular and write 
\begin{displaymath}
  \NP(p)=\{x\in M_{\mathbb{R}} \mid \langle u_{\tau} , x\rangle \ge -a_{\tau} \text{ for all } \tau\in \Sigma^{1}\}
\end{displaymath}
with $a_{\tau}\in \mathbb{Z}_{>0}$. Then
  \begin{displaymath}
    \NP(\mu(p))=\{x\in M_{\mathbb{R}} \mid \langle u_{\tau} , x\rangle
    \ge -(n+1) a_{\tau} +1 \text{ for all } \tau\in
    \Sigma^{1}\}.
  \end{displaymath}
\end{theorem}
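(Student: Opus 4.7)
The plan is to combine the explicit sum formula for $\mu(p)$ provided by Lemma~\ref{lem:3} with the unimodularity hypothesis on $\supp(p)$, and then show the two polytopes coincide by matching their vertices under the combinatorial bijection induced by $\Sigma$. Let $Q$ denote the polytope on the right-hand side of the claimed equality. Since $\supp(p)$ is unimodular and $\NP(p)$ is $n$-dimensional, Lemma~\ref{lem:3} gives $\mu(p) = \sum_{J} (n!\vol_{M}(\conv\{m_{j}\}_{j \in J}))^{2}\prod_{j \in J} c_{j}\chi^{m_{j}}$, where $J$ runs over $(n+1)$-subsets of $\{0,\dots,s\}$. Consequently $\supp(\mu(p))$ consists exactly of the sums $\sum_{j \in J} m_{j}$ for which $\{m_{j}\}_{j \in J}$ is affinely independent, so that the corresponding simplicial volume is nonzero.

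First I would verify $\NP(\mu(p)) \subseteq Q$. Fix $\tau \in \Sigma^{1}$ and an admissible $J$ of cardinality $n+1$. Every $m_{j}$ satisfies $\langle u_{\tau}, m_{j}\rangle \ge -a_{\tau}$, with equality only when $m_{j}$ lies on the facet $F_{\tau}$ of $\NP(p)$. Since $F_{\tau}$ has dimension $n-1$ and the $n+1$ points $\{m_{j}\}_{j \in J}$ are affinely independent, at least one index $j_{*} \in J$ has $m_{j_{*}} \notin F_{\tau}$. Because $m_{j_{*}} \in M$ and $a_{\tau}\in \mathbb{Z}$, this gives $\langle u_{\tau}, m_{j_{*}}\rangle \ge -a_{\tau}+1$, and summing over $j \in J$ yields $\langle u_{\tau}, \sum_{j \in J} m_{j}\rangle \ge -(n+1)a_{\tau}+1$, as required.

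For the reverse inclusion $Q \subseteq \NP(\mu(p))$, I would show that each vertex of $Q$ already belongs to $\supp(\mu(p))$. Fix a vertex $v = m_{j_{0}}$ of $\NP(p)$ and let $\tau_{1},\dots,\tau_{n}$ be the rays of the maximal cone of $\Sigma$ dual to $v$. By unimodularity of $\supp(p)$, choose along each edge $E_{k}$ of $\NP(p)$ incident to $v$ a lattice point $m_{j_{k}} \in E_{k} \cap \supp(p)$ such that $\{m_{j_{k}}-v\}_{k=1}^{n}$ is a basis of $M$. A short check (using that each $E_{k}$ is contained in every facet at $v$ except $F_{\tau_{k}}$) shows that $\{u_{\tau_{k}}\}_{k=1}^{n}$ is the dual basis, i.e.\ $\langle u_{\tau_{i}}, m_{j_{k}}-v\rangle = \delta_{ik}$. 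Setting $J=\{j_{0},\dots,j_{n}\}$, the corresponding simplex has volume $1/n!$, so $\sum_{j \in J} m_{j} \in \supp(\mu(p))$. Expanding
\[
\sum_{j \in J} m_{j} = (n+1)v + \sum_{k=1}^{n}(m_{j_{k}}-v)
\]
and pairing with each $u_{\tau_{i}}$ gives $-(n+1)a_{\tau_{i}}+1$, so this point is exactly the vertex $v_{Q}$ of $Q$ associated to the maximal cone at $v$. Since $\Sigma$ refines the normal fan of $Q$ (both use the same collection of rays $u_\tau$), the vertices of $Q$ are exactly these $v_{Q}$, and the two inclusions combine to give the desired equality.

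The main obstacle is the dual-basis verification in the last step, namely showing that the unimodularity of $\supp(p)$ forces the primitive ray generators $u_{\tau_{k}}$ of $\Sigma$ at $v$ to form the basis of $N$ dual to $\{m_{j_{k}}-v\}$. This is the key reason why the bound $-(n+1)a_{\tau}+1$ is achieved sharply (rather than some larger shift), and it is also precisely what makes $\Sigma$ unimodular in the sense of Definition~\ref{def:1}, so the argument is internally consistent.
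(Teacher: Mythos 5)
Your strategy is essentially the one the paper follows: both rest on the explicit expansion of Lemma~\ref{lem:3} and on the basis of $M$ that unimodularity attaches to each vertex of $\NP(p)$, and both identify the point of $\NP(\mu(p))$ lying over the cone $\sigma_{v}$ as $(n+1)v+\sum_{k}b_{k}$ (the paper packages this as Proposition~\ref{prop:7} and then reads off the facet constants exactly as you do). Your forward inclusion (affine independence forces at least one exponent off the facet $F_{\tau}$, and integrality of the pairing upgrades the strict inequality to $+1$) and your dual-basis verification at a vertex are both correct.

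There is, however, one step that is not valid as written: from ``the subset $J=\{j_{0},\dots,j_{n}\}$ contributes a term with nonzero coefficient and exponent $m_{J}$'' you conclude $m_{J}\in\supp(\mu(p))$. Since $\mu(p)$ is a sum over \emph{all} $(n+1)$-subsets, distinct admissible subsets $J'$ can satisfy $\sum_{j\in J'}m_{j}=m_{J}$, and their contributions $(n!\vol)^{2}\prod_{j\in J'}c_{j}$ involve arbitrary nonzero complex numbers $c_{j}$, so the coefficient of $\chi^{m_{J}}$ in $\mu(p)$ could in principle vanish by cancellation. To close the argument you must show that your $J$ is the unique admissible subset with this exponent sum. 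This is true, and it is precisely the uniqueness statement in the proof of Proposition~\ref{prop:7}: writing each $m_{j}=v+\sum_{k}\gamma_{j,k}b_{k}$ with $\gamma_{j,k}=\langle u_{\tau_{k}},m_{j}-v\rangle\in\mathbb{Z}_{\ge 0}$, affine independence of $\{m_{j}\}_{j\in J'}$ forces $\sum_{j\in J'}\gamma_{j,k}\ge 1$ for every $k$, and equality for all $k$ (which is what $m_{J'}=(n+1)v+\sum_{k}b_{k}$ amounts to) leaves total weight $n$ to distribute over $n+1$ distinct points, pinning $J'$ down to $\{v\}\cup\{v+b_{k}\}_{k}$. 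With that observation added, your proof is complete and coincides in substance with the paper's.
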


This will be derived from a characterization of the face structure and
vertex set of the Newton polytope of the Monge-Ampère polynomial of
$p$.  To state it let
\begin{displaymath}
  \NP(p)^{0} \and
 \NP(\mu(p))^{0}
\end{displaymath}
be the vertex sets of these Newton polytopes, and for each
$v\in \NP(p)^{0}$ denote by $B_{v}$ the (unique) basis of the
lattice~$M$ such that for every edge $E\preceq \NP(p)$ containing $v$
there is $b\in B_{v}$ with $v+b\in E$.

Furthermore, given a cone $\sigma \subset N_{\mathbb{R}}$ and a compact subset
$C\subset M_{\mathbb{R}}$ put
\begin{equation}
  \label{eq:44}
  C_{\sigma}=\{y\in C\mid \langle u,x-y\rangle \ge 0 \text{ for all } u\in \sigma \text{ and } x\in C \}
\end{equation}
for the subset of elements of $C$ of minimal weight in the direction
of $\sigma$.  

\begin{proposition}
  \label{prop:7}
The following properties hold: 
  \begin{enumerate}[leftmargin=*]
  \item \label{item:17}   $\Sigma_{\NP(\mu(p))}=\Sigma_{\NP(p)}$,
  \item \label{item:18}  for each 
$ v\in \NP(p)^{0}$ we have $\NP(\mu(p))_{\sigma_{v}}= \{(n+1)v + \sum_{b\in B_{v}}b\}$,
  \item \label{item:19}    $ \NP(\mu(p))^{0}= \{ (n+1)v +\sum_{b\in B_{v}}b \mid v\in \NP(p)^{0}\}.$
  \end{enumerate}
\end{proposition}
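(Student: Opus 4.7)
My plan is to determine, for each vertex $v\in\NP(p)^0$, the unique exponent of $\mu(p)$ that minimises the weight $\langle u,\cdot\rangle$ in the interior of the corresponding maximal cone $\sigma_v$ of $\Sigma:=\Sigma_{\NP(p)}$. All three items will follow at once from this.

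\textbf{Setup.} First I would fix $v$ and exploit the unimodularity of $\supp(p)$: the cone $\sigma_v$ is generated by a basis $u_1,\dots,u_n$ of $N$ whose dual basis is exactly $B_v=\{b_1,\dots,b_n\}\subset M$, and $v+b_k\in\supp(p)$ for each $k$. Since $\NP(p)$ is contained in the tangent cone $v+\sigma_v^{\vee}$ at $v$, every $m\in\supp(p)$ admits a unique expansion $m-v=\sum_k a_k(m)\,b_k$ with $a_k(m)\in\mathbb{Z}_{\ge 0}$, which vanishes in every coordinate exactly when $m=v$.

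\textbf{Minimisation.} Next I would apply Lemma \ref{lem:3}: every exponent of $\mu(p)$ has the form $\sum_{j\in J}m_j$ for an $(n+1)$-subset $J\subset\{0,\dots,s\}$ whose $m_j$ form an $n$-simplex of positive volume. For $u=\sum_k\lambda_k u_k$ with $\lambda_k>0$,
\[
\bigl\langle u,\textstyle\sum_{j\in J}m_j\bigr\rangle = (n+1)\langle u,v\rangle + W(J), \qquad W(J):=\sum_k \lambda_k\sum_{j\in J}a_k(m_j).
\]
The core claim is that $W(J)\ge\sum_k\lambda_k$, with equality only when $J=J_v:=\{j_0,j_1,\dots,j_n\}$ satisfies $m_{j_0}=v$ and $m_{j_k}=v+b_k$. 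To prove it I would set $A=(a_k(m_j))_{j\in J,\,k}$ and observe that affine independence of the $m_j$ forces $\det[A\mid\mathbf{1}]\ne 0$; a cofactor expansion along the column of ones yields some $j^*\in J$ with $\det(A_{\setminus j^*})\ne 0$. This $n\times n$ submatrix has non-negative integer entries, so each of its column sums is at least $1$, giving $\sum_{j\ne j^*}\sum_k\lambda_k a_k(m_j)\ge\sum_k\lambda_k$. If $m_{j^*}\ne v$ the extra row contributes at least $\min_k\lambda_k>0$ and the inequality is strict; if $m_{j^*}=v$ then equality forces every column sum of the submatrix to be $1$, and nonsingularity forces that $0/1$ matrix to be a permutation matrix, yielding $J=J_v$.

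\textbf{Conclusion and obstacle.} The minimiser $J_v$ produces the exponent $(n+1)v+\sum_{b\in B_v}b$ with coefficient $(n!\vol)^2\prod_{j\in J_v}c_j=\prod_{j\in J_v}c_j\ne 0$, because $\{v,v+b_1,\dots,v+b_n\}$ is a unit lattice simplex. Applying the same matrix analysis directly to the equation $\sum_{j\in J'}(m_j-v)=\sum_k b_k$ shows that no other admissible $J'$ yields this exponent sum, so no cancellation occurs and \eqref{item:18} is established. As $v$ ranges over $\NP(p)^0$ the open cones $\mathrm{int}(\sigma_v)$ cover an open dense subset of $N_{\mathbb{R}}$, so every vertex of $\NP(\mu(p))$ arises in this way and its normal cone equals $\sigma_v$; this proves \eqref{item:19} and then \eqref{item:17}. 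The main difficulty lies in the combinatorial step: reducing the case $v\notin\{m_j\}_{j\in J}$ cleanly via the cofactor expansion, and then characterising the equality case of the column-sum inequality as a permutation matrix.
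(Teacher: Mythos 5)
Your proof is correct and follows essentially the same route as the paper: both expand each exponent of $\mu(p)$ in the vertex basis $B_v$ with non-negative integer coefficients and minimize $\langle u,m_J\rangle$ over admissible $(n+1)$-subsets $J$ for $u$ interior to $\sigma_v$. The only difference is that you establish the key inequality and its equality case via a cofactor/permutation-matrix argument, where the paper observes more directly that affine independence of the $n+1$ points forces each coordinate sum $\gamma_{J,b}$ to be at least $1$, with equality for all $b$ only for $J_0=\{v\}\cup\{v+b\}_{b\in B_v}$.
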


\begin{proof}
  Let $v\in \NP(p)^{0}$.  For each subset $J\subset \{0,\dots, s\}$ of
  cardinality $n+1$ whose associated exponents are affinely
  independent set
\begin{math}
  m_{J}=\sum_{j\in J} m_{j}\in M.
\end{math}
For each $j\in J$ write
$ m_{j} = v+\sum_{b\in B_{v}} \gamma_{j,b} \, b$ with
$\gamma_{j,b}\in \mathbb{Z}_{\ge 0}$ and then for each $ b\in B_{v}$
set $ \gamma_{J,b}=\sum_{j\in J} \gamma_{j,b}$.  Thus
\begin{displaymath}
m_{J}=(n+1) v + \sum_{b\in B_{v}}\gamma_{J,b}\, b.
\end{displaymath}
Since the exponents $m_{j}$, $j\in J$, are affinely independent we
have $\gamma_{J,b}\ge 1$ for all~$b$, and $\gamma_{J,b}= 1$ for
all~$b$ exactly when $J=J_{0}$ for the only index subset  such
that $ \{m_{j}\}_{j\in J_{0}}= \{v+b\}_{b\in B_{v}}$.

Let $u\in \sigma_{v}^{\circ}$ be an interior point of the
$n$-dimensional cone of $\Sigma$ corresponding to the vertex $v$.
Then $\langle u,b\rangle >0$ for all $b$ and so
  \begin{displaymath}
    \langle u,m_{J}\rangle =  (n+1) \langle u, v \rangle +  \sum_{b\in B_{v}}  \gamma_{J,b} \, \langle u , b\rangle \ge
    (n+1) \langle u, v \rangle +  \sum_{b\in B_{v}}  \langle u , b\rangle,
  \end{displaymath}
  with the equality occurring exactly when $J=J_{0}$.  Combining this
  with Lemma \ref{lem:3} we get that the minimal weight with respect
  to $u$ of the exponents of $\mu(p)$ is realized at the lattice~point
  \begin{displaymath}
w_{v}\coloneqq    m_{J_{0}}=(n+1) v+\sum_{b\in B_{v}}b.
  \end{displaymath}
Hence $\NP(\mu(p))_{\sigma_{v}}=\{w_{v}\}$, thus proving \eqref{item:18}.
  This also implies that the angle $\sigma_{w_{v}}$ of $\NP(\mu(p))$
  at the vertex $w_{v}$ contains the cone~$\sigma_{v}$. Since the
  collection $\sigma_{v}$, $v\in \NP(p)^{0}$, covers $N_{\mathbb{R}}$
  we deduce that $ \sigma_{w_{v}}= \sigma_{v}$ for all $v$, giving
  both \eqref{item:17} and \eqref{item:19}.
\end{proof}

\begin{proof}[Proof of Theorem \ref{thm:4}]
  By Proposition \ref{prop:7}\eqref{item:17} we have that
  $\NP(\mu(p))$ has the same face structure of $\NP(p)$. In
  particular their rays coincide and so by  Proposition \ref{prop:7}\eqref{item:18} 
  \begin{displaymath}
    \NP(\mu(p))=\{x\in M_{\mathbb{R}} \mid \langle u_{\tau} , x\rangle
    \ge b_{\tau}  \text{ for all } \tau\in
    \Sigma^{1}\}
  \end{displaymath}
  with $b_{\tau}=\langle u_{\tau}, (n+1) v+\sum_{b\in B_{v}}b \rangle$
  for each ray $\tau\in \Sigma^{1}$ and any vertex $v\in \NP(p)^{0}$
  lying in the facet $\NP(p)_{\tau}$. Then
  \begin{displaymath}
b_{\tau} =(n+1) \langle u_{\tau},v\rangle +\sum_{b\in B_{v}} \langle u_{\tau},b\rangle 
= -(n+1)a_{\tau} +1
\end{displaymath}
because
  \begin{math}
    \langle u_{\tau},v\rangle =-a_{\tau}
\end{math}
and  $\langle u_{\tau},b\rangle =0$ for all but one $b\in B_{v}$,
for which this weight equals  $1$.
\end{proof}

\begin{corollary}
  \label{cor:7}
  If $\NP(p)$ is a reflexive polytope then $\NP(\mu(p))= n \NP(p)$.
\end{corollary}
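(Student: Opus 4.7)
The plan is to apply Theorem~\ref{thm:4} directly, together with the defining property of a reflexive lattice polytope. Recall that a full-dimensional lattice polytope $\NP(p)\subset M_{\mathbb{R}}$ containing the origin in its interior is reflexive if and only if it admits a facet presentation of the form
\begin{displaymath}
  \NP(p)=\{x\in M_{\mathbb{R}} \mid \langle u_{\tau},x\rangle \ge -1 \text{ for all } \tau\in \Sigma^{1}\},
\end{displaymath}
where $\Sigma=\Sigma_{\NP(p)}$ and the $u_{\tau}$ are the primitive generators of its rays. In other words, the integers $a_{\tau}$ appearing in Theorem~\ref{thm:4} all equal $1$.

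Substituting $a_{\tau}=1$ into the conclusion of Theorem~\ref{thm:4} yields
\begin{displaymath}
  \NP(\mu(p))=\{x\in M_{\mathbb{R}} \mid \langle u_{\tau},x\rangle \ge -(n+1)+1=-n \text{ for all } \tau\in \Sigma^{1}\}.
\end{displaymath}
The final step is to recognise this halfspace description as the dilate $n\,\NP(p)$: a point $x$ lies in $n\NP(p)$ exactly when $x/n\in \NP(p)$, i.e.\ when $\langle u_{\tau},x\rangle \ge -n$ for every ray $\tau\in \Sigma^{1}$.

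There is no real obstacle here; the work is entirely contained in Theorem~\ref{thm:4}, and the corollary is simply the observation that reflexivity collapses the parameters $a_{\tau}$ to a common value, turning the componentwise shift predicted by that theorem into a uniform scaling.
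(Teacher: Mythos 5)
Your proof is correct and follows exactly the paper's route: substitute $a_{\tau}=1$ (the defining property of a reflexive polytope) into Theorem~\ref{thm:4} and recognise the resulting facet presentation as that of the dilate $n\,\NP(p)$. Nothing further is needed.
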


\begin{proof}
  This follows directly from Theorem \ref{thm:4} and the fact that
  when $\NP(p)$ is reflexive we have $a_{\tau}=1$ for all
  $\tau \in \Sigma^{1}$.
\end{proof}

\subsection{Initial parts}
\label{sec:initial-parts}

Consider again a Laurent polynomial 
\begin{displaymath}
  p=\sum_{j=0}^{s}c_{j}\chi^{m_{j}}\in \mathbb{C}[M]
\end{displaymath}
with unimodular support.  The \emph{restriction} of $p$ to a subset
$S\subset M_{\mathbb{R}}$, denoted by $p|_{S}$, is the sum of the
terms of $p$ whose exponents lie in~$S$, that is
\begin{equation}
  \label{eq:46}
  p|_{S}=\sum_{m_{j}\in S} c_{j}\chi^{m_{j}} \in \mathbb{C}[M].
\end{equation}
Given a cone $\sigma \subset N_{\mathbb{R}}$, the \emph{initial part}
of $p$ in the direction of $\sigma$ is the restriction of $p$ to the
face of its Newton polytope defined by $\sigma$ as in \eqref{eq:44},
that is
\begin{displaymath}
  \init_{\sigma}(p)=p|_{\NP(p)_{\sigma}} \in \mathbb{C}[M].
\end{displaymath}

\begin{definition}
  \label{def:8}
  Let $\Delta \subset M_{\mathbb{R}}$ be an $n$-dimensional lattice
  polytope and $F\preceq \Delta$ a facet of it. The \emph{adjacent
    polytope} of $F$ is the subset of $\Delta$ defined as
  \begin{displaymath}
F'=     \{x\in \Delta\mid \langle u_{F},x-y\rangle =1 \text{ for all } y\in F\}
  \end{displaymath}
  with $u_{F}\in N$ the primitive inner normal vector of $F$. It
  coincides with the intersection of $\Delta$ with the inner parallel
  lattice hyperplane that is closest to~$F$.
\end{definition}

We now turn to the study of the initial parts of the Monge-Ampère
polynomial of~$p$.  Recall that by Proposition
\ref{prop:7}\eqref{item:17} the Newton polytopes of $p$ and $\mu(p)$
have the same face structure, that is
\begin{displaymath}
\Sigma_{\NP(p)} =\Sigma_{\NP(\mu(p))}.
\end{displaymath}
Hence the faces of these polytopes are in dimension-reversing
bijection with the cones of this fan, that we denote again by $\Sigma$
for short.

\begin{theorem}
\label{thm:2}
Let $ p\in \mathbb{C}[M]$ with $\supp(p)$ unimodular and
$\tau\in \Sigma^{1}$. Then
\begin{displaymath}
  \init_{\tau}(  \mu(p)) = \mu(\init_{\tau}(p) )   \ p|_{F'_{\tau}}
\end{displaymath}
with $F'_{\tau}$ the adjacent polytope of the facet
$F_{\tau}\preceq \NP(p)$.
\end{theorem}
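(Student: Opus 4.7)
The plan is to apply Lemma~\ref{lem:3} directly to $\mu(p)$, identify which terms survive when taking the $\tau$-initial part, and then split the remaining sum into a product.

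First I would expand
\[ \mu(p) = \sum_J \bigl(n!\vol_M(\conv\{m_j\}_{j\in J})\bigr)^{2}\prod_{j\in J} c_j\chi^{m_j} \]
over $(n+1)$-subsets $J\subset\{0,\dots,s\}$ whose exponents are affinely independent. By Theorem~\ref{thm:4} the facet of $\NP(\mu(p))$ dual to $\tau$ lies on the hyperplane $\langle u_\tau,\cdot\rangle = -(n+1)a_\tau + 1$, so only those $J$ with $\sum_{j\in J}\langle u_\tau,m_j\rangle = -(n+1)a_\tau+1$ contribute to $\init_\tau(\mu(p))$. Since $\langle u_\tau,m_j\rangle \ge -a_\tau$ with equality exactly on $F_\tau$ and the next attainable value being $-a_\tau+1$, realised precisely on $F'_\tau$, this forces $J = J_0\sqcup\{k\}$ with $|J_0|=n$, $m_j\in F_\tau$ for $j\in J_0$, and $m_k\in F'_\tau$. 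The affine independence of $J$ then reduces to affine independence of $J_0$ within the affine span of $F_\tau$, since $m_k$ lies off this hyperplane.

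The heart of the argument is a volume factorisation: I would show that for such $J$,
\[ n!\vol_M(\conv\{m_j\}_{j\in J}) = (n-1)!\vol_{M_{F_\tau}}(\conv\{m_j\}_{j\in J_0}). \]
Choose $v=m_{j_0}$ for some $j_0\in J_0$ and extend a basis of $M_{F_\tau}$ to a basis of $M$ by any $e\in M$ with $\langle u_\tau,e\rangle=1$, which exists by primitivity of $u_\tau$. In these coordinates the matrix of $(m_j-v)_{j\in J\setminus\{j_0\}}$ is block upper triangular, with an upper-left $(n-1)\times(n-1)$ block representing $\{m_j-v\}_{j\in J_0\setminus\{j_0\}}$ in $M_{F_\tau}$ and a single $1$ in the lower-right entry, because $\langle u_\tau,m_k-v\rangle = 1$ by the very definition of $F'_\tau$. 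Its determinant thus equals that of the base block.

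Substituting and separating the independent sums over $J_0$ and $k$ yields
\[ \init_\tau(\mu(p)) = \Bigl(\sum_{J_0}\bigl((n-1)!\vol_{M_{F_\tau}}(\conv\{m_j\}_{j\in J_0})\bigr)^{2}\prod_{j\in J_0} c_j\chi^{m_j}\Bigr)\cdot\Bigl(\sum_{m_k\in F'_\tau} c_k\chi^{m_k}\Bigr). \]
The second factor is $p|_{F'_\tau}$ by definition. The first factor equals $\mu(\init_\tau(p))$ via a second application of Lemma~\ref{lem:3} in the rank-$(n-1)$ lattice $M_{F_\tau}$; for this I would note that $\supp(\init_\tau(p))$ is unimodular in $M_{F_\tau}$, since at any vertex $v$ of $F_\tau$ the $n-1$ edges of $\NP(p)$ lying in $F_\tau$ provide $n-1$ basis vectors of $M$ contained in $M_{F_\tau}$, hence a basis of $M_{F_\tau}$. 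The main obstacle is the volume identity; everything else is bookkeeping. That identity is essentially the statement that the lattice-normal ``height'' of $m_k$ above $F_\tau$ is exactly $1$, which is precisely what the definition of the adjacent polytope $F'_\tau$ encodes.
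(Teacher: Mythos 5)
Your proposal is correct and follows essentially the same route as the paper: expand $\mu(p)$ via Lemma~\ref{lem:3}, use Theorem~\ref{thm:4} to see that the contributing index sets split as $J=I\cup\{k\}$ with $\{m_i\}_{i\in I}\subset F_\tau$ and $m_k\in F'_\tau$, invoke the volume identity $n!\vol(\conv\{m_j\}_{j\in J})=(n-1)!\vol(\conv\{m_i\}_{i\in I})$, and factor the resulting double sum. The only difference is cosmetic: the paper first normalizes coordinates so that $F_\tau$ lies in $(x_n=0)$ and states the volume identity without proof, whereas you keep $u_\tau$ explicit and justify the identity by the block-triangular determinant, which is a slightly more complete write-up of the same step.
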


\begin{proof}
  Since $\supp(p)$ is unimodular, up to a translation and an
  isomorphism $M\simeq \mathbb{Z}^{n}$ we can assume
  \begin{displaymath}
    \NP(p)\subset (x_{n}\ge 0) \and     F_{\tau}=\NP(p)\cap (x_{n}=0).
  \end{displaymath}
  Denote by $H_{\tau} \preceq \NP(\mu(p)) $ the facet defined by the
  ray $\tau$.  By Theorem \ref{thm:4} we have
  \begin{displaymath}
\NP(\mu(p)) \subset (x_{n}\ge 1) \and
  H_{\tau}=\NP(\mu(p))\cap (x_{n}=1).    
  \end{displaymath}
  Then by Lemma \ref{lem:3}
  \begin{displaymath}
\init_{\tau}(\mu(p))=  \mu(p)|_{H_{\tau}}=  \sum_{J}
    (n!     \vol_{\mathbb{Z}^{n}}(\conv(\{m_{j}\}_{j\in J}))^{2} \prod_{j\in J} c_{j} \chi^{m_{j}}, 
  \end{displaymath}
  the sum being over the subsets $J\subset \{0,\dots, s\}$ of
  cardinality $n+1$ whose corresponding exponents are affinely
  independent and 
  \begin{displaymath}
    \sum_{j\in J} m_{j,n} =1.
  \end{displaymath}
  These are the index subsets of the form $J=I\cup \{k\} $ for any
  $I\subset \{0,\dots, s\}$ of cardinality~$n$ whose exponents
  are affinely independent and verify $m_{i,n}=0$ for all $i\in I$, and
  any $0\le k\le s$ with $m_{k,n}=1$.
  For each  decomposition  $J=I\cup\{k\}$ we have
  \begin{displaymath}
    n!    \vol_{\mathbb{Z}^{n}}(\conv(\{m_{j}\}_{j\in J}) = (n-1)!\vol_{\mathbb{Z}^{n-1}}(\conv(\{m_{i}\}_{i\in I} ))
\end{displaymath}
and $     \prod_{j\in J} c_{j}\chi^{m_{j}}= c_{k} \chi^{m_{k}} \, \prod_{i\in I} c_{i}\chi^{m_{i}}$.  Hence
  \begin{align*}
\init_{\tau}(\mu(p))&=  \sum_{I,k} ( (n-1)!     \vol_{\mathbb{Z}^{n-1}}(\conv(\{m_{i}\}_{i\in I}))^{2}
                        \, c_{k} \chi^{m_{k}} \prod_{i\in I} c_{i} \chi^{m_{i}}\\
                    &   = \Big(
    \sum_{I} ((n-1)!     \vol_{\mathbb{Z}^{n-1}}(\conv(\{m_{i}\}_{i\in I}))^{2}
                      \prod_{i\in I} c_{i} \chi^{m_{i}} \Big) \Big( \sum_{k} c_{k} \chi^{m_{k}} \Big) \\
    & =\init_{\tau}(\mu(p))  \ p_{F_{\tau}^{'}},
  \end{align*}
    which is the stated formula.   
\end{proof}

By a  descent argument we obtain a formula for an arbitrary
initial part~of~$\mu(p)$.

\begin{corollary}
\label{cor:8}
Let $ p\in \mathbb{C}[M]$ with $\supp(p)$ unimodular and
$\sigma\in \Sigma^{r}$ with $1\le r\le n$. Let $F_{\sigma}$ be the
$(n-r)$-dimensional face of $\NP(p)$ corresponding to $\sigma$ and
$G_{i}$, $i=1,\dots, r$, the faces of $\NP(p)$ of dimension $n-r+1$
containing $F_{\sigma}$.  Then
\begin{displaymath}
\init_{\sigma}  (\mu(p)) =\mu(\init_{\sigma}(p)) \, \prod_{i=1}^{r} p|_{F_{\sigma}^{(i)}}
\end{displaymath}
with $F_{\sigma}^{(i)} \subset G_{i}$ the adjacent polytope of
$F_{\sigma}$ as a facet of $ G_{i}$.
\end{corollary}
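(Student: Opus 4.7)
The plan is to prove Corollary \ref{cor:8} by induction on $r$, using Theorem \ref{thm:2} both as the base case and as the key step in the induction. For $r = 1$ the cone $\sigma = \tau$ is a ray, the unique face $G_{1}$ of dimension $n$ containing the facet $F_{\tau}$ is $\NP(p)$ itself, and the adjacent polytope $F_{\tau}^{(1)}$ of Definition \ref{def:8} is precisely the $F_{\tau}'$ appearing in Theorem \ref{thm:2}, so the statement holds directly.

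For the inductive step, assuming $r \ge 2$ and the result for $r - 1$, I will fix a ray $\tau$ of $\sigma$. Since the unimodular fan $\Sigma$ is simplicial, $\sigma$ has exactly $r$ facets $\sigma_{G_{i}}$, of which exactly one, say $\sigma_{G_{r}}$, fails to contain $\tau$; consequently $G_{1}, \dots, G_{r-1} \subset F_{\tau}$ while $G_{r} \not\subset F_{\tau}$. Applying Theorem \ref{thm:2} to the ray $\tau$ gives
\begin{displaymath}
\init_{\tau}(\mu(p)) = \mu(\init_{\tau}(p)) \cdot p|_{F_{\tau}'}.
\end{displaymath}
Because $\tau \prec \sigma$ one has $\init_{\sigma} \circ \init_{\tau} = \init_{\sigma}$, and since $\init_{\sigma}$ is multiplicative on Laurent polynomials, applying it to the previous identity reduces the problem to computing $\init_{\sigma}(\mu(\init_{\tau}(p)))$ and $\init_{\sigma}(p|_{F_{\tau}'})$ separately.

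For the first of these I will apply the induction hypothesis to $q = \init_{\tau}(p)$ and the $(r-1)$-dimensional cone $\sigma/\tau$ in the normal fan of $F_{\tau}$. Its support is unimodular in the affine lattice of $F_{\tau}$: at each vertex $v$ of $F_{\tau}$ the $n - 1$ elements of the basis $B_{v}$ of $M$ coming from edges of $\NP(p)$ lying in $F_{\tau}$ form a lattice basis of this affine direction. The $(n - r + 1)$-dimensional faces of $F_{\tau}$ containing $F_{\sigma}$ are exactly $G_{1}, \dots, G_{r-1}$, and for each $i < r$ the adjacent polytope $F_{\sigma}^{(i)}$ depends only on the pair $(G_{i}, F_{\sigma})$ and is therefore unchanged when passing from $\NP(p)$ to $F_{\tau}$. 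Using the identities $\init_{\sigma/\tau}(q) = \init_{\sigma}(p)$ and $q|_{F_{\sigma}^{(i)}} = p|_{F_{\sigma}^{(i)}}$, the induction yields
\begin{displaymath}
\init_{\sigma}(\mu(q)) = \mu(\init_{\sigma}(p)) \prod_{i=1}^{r-1} p|_{F_{\sigma}^{(i)}}.
\end{displaymath}

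The remaining identification $\init_{\sigma}(p|_{F_{\tau}'}) = p|_{F_{\sigma}^{(r)}}$ is where I expect the proof to require most care, since $F_{\tau}'$ is a lattice slice of $\NP(p)$ rather than a genuine face, and one has to check that taking the $\sigma$-minimum on this slice recovers the adjacent polytope of $F_{\sigma}$ inside $G_{r}$. I would argue as follows: choosing $u = u_{\tau} + \varepsilon u''$ with $u'' \in \sigma_{G_{r}}^{\circ}$ and $\varepsilon > 0$ sufficiently small puts $u$ in $\sigma^{\circ}$, and since $\langle u_{\tau}, x \rangle = -a_{\tau} + 1$ is constant on $F_{\tau}'$, minimizing $\langle u, \cdot \rangle$ over $F_{\tau}'$ reduces to minimizing $\langle u'', \cdot \rangle$. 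The latter minimum on $\NP(p)$ is attained exactly along $G_{r}$, so the minimum on $F_{\tau}'$ is $G_{r} \cap F_{\tau}'$, which by unimodularity equals the lattice hyperplane slice of $G_{r}$ at distance $1$ from $F_{\sigma}$, namely $F_{\sigma}^{(r)}$. Combining the three ingredients produces the claimed factorization.
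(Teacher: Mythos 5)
Your proof is correct and is essentially the paper's argument with the two descent steps performed in the opposite order. The paper writes $\sigma=\varsigma+\tau$ with $\varsigma\in\Sigma^{r-1}$, applies the inductive hypothesis to the \emph{same} polynomial $p$ and the smaller cone $\varsigma$, and only afterwards applies Theorem~\ref{thm:2} to $\init_{\varsigma}(p)$ while taking $\init_{\tau}$ of the resulting identity; you instead apply Theorem~\ref{thm:2} first, to $p$ and a ray $\tau\preceq\sigma$, and then invoke the inductive hypothesis for the facet polynomial $q=\init_{\tau}(p)$ on the sublattice of $F_{\tau}$ with the quotient cone $\sigma/\tau$. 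The two orderings demand symmetric side verifications: the paper needs $\init_{\tau}(p|_{F_{\varsigma}^{(i)}})=p|_{F_{\sigma}^{(i)}}$ (asserted without proof there), while you need $\init_{\sigma}(p|_{F'_{\tau}})=p|_{F_{\sigma}^{(r)}}$, which you rightly single out as the delicate point and justify correctly: $u_{\tau}$ is constant on the slice $F'_{\tau}$, so minimizing over $\sigma$ reduces to minimizing over the facet $\sigma_{G_{r}}$, whose optimal face $G_{r}$ meets $F'_{\tau}\cap\supp(p)$ by unimodularity and cuts out exactly $F_{\sigma}^{(r)}$. The only extra overhead of your ordering is that the induction hypothesis is applied on a smaller lattice, so one must (as you do) check that $\supp(\init_{\tau}(p))$ stays unimodular there and that $\init_{\sigma/\tau}$ and $\mu$ computed on $M\cap\tau^{\bot}$ agree with $\init_{\sigma}$ and $\mu$ computed intrinsically; the paper's ordering keeps $p$ and $M$ fixed throughout and sidesteps this bookkeeping.
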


\begin{proof}
  Choose a decomposition $\sigma= \varsigma + \tau$ with
  $\varsigma \in \Sigma^{r-1}$ and $\tau \in \Sigma^{1}$, which is
  possible by the unimodularity of $\Sigma$.  By induction 
  \begin{equation}
    \label{eq:45}
    \init_{\varsigma}(\mu(p))= \mu(\init_{\varsigma}(p)) \, \prod_{i=1}^{r-1} p|_{F_{\varsigma}^{(i)}},
  \end{equation}
  where $F_{\varsigma}^{(i)}$, $i=1,\dots, r-1$, are the adjacent
  polytopes of the $(n-r+1)$-dimensional face $F_{\varsigma}$.
We also have
  \begin{displaymath}
    \init_{\tau}(    \init_{\varsigma}(\mu(p)))=\init_{\sigma}(\mu(p)), \quad   
    \init_{\tau}(    \init_{\varsigma}(p))=\init_{\sigma}(p)
  \end{displaymath}
  and $ \init_{\tau}(p|_{F_{\varsigma}^{(i)}})= p_{F_{\sigma}^{(i)}}$,
  $i=1,\dots, r-1$, for the adjacent polytopes $ F_{\sigma}^{(i)}$ of
  the face~$F_{\sigma}$ that are not contained in $F_{\varsigma}$.

  Hence taking initial parts in the direction of the ray $\tau$ in
  \eqref{eq:45} and applying the multiplicativity of initial parts and
  Theorem \ref{thm:2} we get
  \begin{displaymath}
    \init_{\sigma}(\mu(p)) =
 \init_{\tau}(  \mu(\init_{\varsigma}(p))) \, \prod_{i=1}^{r-1}     \init_{\tau}( p|_{F_{\varsigma}^{(i)}}) 
    =   \mu(\init_{\sigma}(p)) \ p|_{F_{\sigma}^{(r)}} \, \prod_{i=1}^{r-1}      p|_{F_{\sigma}^{(i)}}, 
  \end{displaymath}
  which gives the formula. 
\end{proof}

\subsection{Algebraic variants of the Einstein condition}
\label{sec:kahl-einst-cond}

Now let $X$ be a Fano toric manifold with torus $\mathbb{T}$, and
recall that $X$ admits a projectively induced K\"ahler-Einstein form
with constant $\lambda >0$ if and only if there exists
$ p \in \mathbb{R}_{>0}[M]$ such that $\supp(p)$ is unimodular,
$\NP(p)=\lambda^{-1} \Delta_{-K_{X}}$ and
\begin{equation}
  \label{eq:13}
 \mu(p)=c\chi^{m} p^{n+1-\lambda}  
\end{equation}
for some $c\in \mathbb{R}_{>0}$ and $m\in M$ (Corollary \ref{cor:6}).

In the study of the Einstein condition it is standard to normalize the
K\"ahler form to reduce from an arbitrary constant $\lambda>0$ to the
case ${\lambda=1}$.  In our setting this normalization is a more
delicate operation because we cannot ensure that it can be done
through a projective immersion, contrary as assumed in both \cite[page
485]{ArezzoLoiZuddas_hbm} and \cite[Section 2.5.7]{MS:TKEmicps}, see
Remark \ref{rem:7} below.  Taking this issue into account we introduce
the subset of Laurent polynomials
\begin{displaymath}
\mathbb{R}[M]^{+}=\{ p\in \mathbb{R}[M] \mid p^{\nu}\in  \mathbb{R}_{>0}[M] \text{ for some } \nu\in \mathbb{Z}_{>0}\}.
\end{displaymath}

We obtain the next algebraic characterization of the Einstein
condition for a K\"ahler form that is induced by a full toric
immersion, similar to that  considered in
\cite[Lemma~4.1]{ArezzoLoiZuddas_hbm} and \cite[Remark
2.4]{MS:TKEmicps}.

\begin{proposition}
  \label{prop:1}
The
following conditions are equivalent:
  \begin{enumerate}[leftmargin=*]
  \item \label{item:11} $X$ admits a projectively
  induced K\"ahler-Einstein form,
\item \label{item:16} there is $ p \in \mathbb{R}[M]^{+}$ with
  $\supp(p)$ is unimodular, $\NP(p)= \Delta_{-K_{X}}$ and
  $\mu(p)=p^{n}$.
  \end{enumerate}
\end{proposition}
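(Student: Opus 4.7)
The plan is to regard condition~\eqref{item:16} as the Einstein equation of Corollary~\ref{cor:6}\eqref{item:15} rewritten after passing to a formal ``$\lambda$-th power'' and absorbing the constant and monomial factors. For the direction \eqref{item:16}$\Rightarrow$\eqref{item:11}, I will pick $\nu \in \mathbb{Z}_{>0}$ with $P := p^{\nu} \in \mathbb{R}_{>0}[M]$ and verify that $P$ satisfies \eqref{item:15} with $\lambda = 1/\nu$, $c = \nu^{n}$ and $m = 0$. The identity $\NP(P) = \nu\NP(p) = \nu\Delta_{-K_{X}} = \lambda^{-1}\Delta_{-K_{X}}$ is immediate. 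Unimodularity of $\supp(P)$ is obtained by noting that at each vertex $\nu v$ of $\NP(P)$ and each edge toward an adjacent scaled vertex $\nu v'$, the point $(\nu-1)v + v'$ lies in $\supp(P)$ with coefficient $\nu\, c_{v}^{\nu-1}c_{v'} \ne 0$: by convexity together with unimodularity of $\supp(p)$, the edge $[v,v']$ of $\NP(p)$ carries no intermediate lattice point of $\supp(p)$, so this is the only $\nu$-fold decomposition of that point in $\supp(p)$. Since $\NP(p)$ is $n$-dimensional we have $\rank(M_{p}) = n$ and $\mu(p) = p^{n+1}\delta(p)$ by Definition~\ref{def:5}, so $\mu(p) = p^{n}$ gives $\delta(p) = p^{-1}$; Lemma~\ref{lem:5}\eqref{item:24} then yields $\delta(P) = \nu^{n}\delta(p) = \nu^{n}P^{-1/\nu}$, and Corollary~\ref{cor:6} concludes~\eqref{item:11}.

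For the converse, I will apply Corollary~\ref{cor:6} to a toric full immersion $\varphi$ with associated polynomial $P \in \mathbb{R}_{>0}[M]$ satisfying $\delta(P) = c\chi^{m}P^{-\lambda}$. Since $\mathrm{Pic}(X)$ is torsion-free for a smooth toric Fano variety and the Einstein equation forces $c_{1}(-K_{X}) = \lambda\, c_{1}(L)$ with $L = \varphi^{*}\mathcal{O}(1)$, the constant $\lambda$ is a positive rational; writing $\lambda = a/b$ with $\gcd(a,b) = 1$, Bezout's identity applied to $L^{a} = (-K_{X})^{b}$ in $\mathrm{Pic}(X)$ yields a line bundle $N$ with $L = N^{b}$ and $N^{a} = -K_{X}$. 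By Bando--Mabuchi uniqueness combined with Calabi's rigidity (Lemma~\ref{lem:1}), $\varphi$ can be arranged to factor through the Veronese~$v_{b}$ of a K\"ahler-Einstein-inducing immersion $\psi$ via sections of $N$, so that $P = Q^{b}$ with $Q = p_{\psi} \in \mathbb{R}_{>0}[M]$. Setting $p = (c\lambda^{n})^{-1}\chi^{-m}Q^{a}$ produces a genuine Laurent polynomial with $p^{b} = (c\lambda^{n})^{-b}\chi^{-bm}P^{a} \in \mathbb{R}_{>0}[M]$, so $p \in \mathbb{R}[M]^{+}$. The Newton polytope identity $\NP(p) = a\NP(Q) = \lambda\NP(P) = \Delta_{-K_{X}}$ is immediate; unimodularity of $\supp(p)$ follows from smoothness of~$X$ (which makes the adjacent primitive edge vectors of $\Delta_{-K_{X}}$ a basis of~$M$) together with nonvanishing of the relevant coefficients of~$Q^{a}$; and $\mu(p) = p^{n}$ comes from a direct calculation with Lemma~\ref{lem:6}\eqref{item:1}--\eqref{item:2} and the Einstein equation for~$P$, the scalar $(c\lambda^{n})^{-1}$ and shift $-m$ being chosen exactly to cancel the factors $c$ and $\chi^{m}$.

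The main obstacle I foresee is the factorization $P = Q^{b}$: the argument rests on the rigidity of projectively induced K\"ahler-Einstein forms combined with the uniqueness of K\"ahler-Einstein representatives in a fixed cohomology class, and may require additional care when $N$ does not admit enough global sections to define $\psi$ directly---in that case, one works with a sufficiently high Veronese multiple of $N$ and adjusts the root-taking accordingly. Once this factorization is in place, all remaining verifications reduce to routine computations with Lemmas~\ref{lem:5}, \ref{lem:6} and Corollary~\ref{cor:6}.
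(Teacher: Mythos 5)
The direction \eqref{item:16}$\Rightarrow$\eqref{item:11} follows the paper's route and is essentially fine, but your unimodularity sub-argument picks the wrong lattice point: $(\nu-1)v+v'$ lies at lattice distance $\ell([v,v'])$ from the vertex $\nu v$, so the differences $(\nu-1)v+v'-\nu v=v'-v$ form a basis only when every edge has lattice length one. Also, your claim that $[v,v']$ carries no intermediate support point is backwards: when $\ell(E)>1$, unimodularity of $\supp(p)$ \emph{forces} $v+b\in\supp(p)$ for the primitive edge direction $b$, since the chosen $w_E-v$ must be primitive to belong to a basis. The correct point to use is $\nu v+b$ with $b\in B_v$; being at lattice distance $1$ along $\nu E$, its only decomposition into $\nu$ support points is $(\nu-1)$ copies of $v$ plus one copy of $v+b$, giving coefficient $\nu c_v^{\nu-1}c_{v+b}\ne 0$. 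With that repair this half is correct.

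The converse contains a genuine gap. Your argument hinges on the factorization $P=Q^{b}$ with $Q=p_{\psi}\in\mathbb{R}_{>0}[M]$ coming from an immersion $\psi$ by sections of an $b$-th root $N$ of $L$. The Bezout computation in $\operatorname{Pic}(X)$ only produces the line bundle $N$; it does not produce a toric full immersion whose associated positive Laurent polynomial is a $b$-th root of the specific polynomial $P$, and invoking Bando--Mabuchi plus Calabi rigidity does not bridge this (the linear system defining $\varphi$ need not be a symmetric power, and $N$ need not even be very ample --- you flag this yourself as ``the main obstacle''). More importantly, this is exactly the statement Remark~\ref{rem:7} warns cannot be ensured: the normalized potential need not have positive coefficients and hence need not arise from an immersion, which is the entire reason the class $\mathbb{R}[M]^{+}$ is introduced. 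The paper's proof avoids the issue purely algebraically: setting $q=\gamma P^{\lambda}$, the Monge--Amp\`ere equation gives $P^{\,n+1-\lambda}=c^{-1}\chi^{-m}\mu(P)\in\mathbb{R}[M]$, so $P^{\lambda}=P^{\,n+1}/P^{\,n+1-\lambda}$ is a rational function whose $b$-th power $P^{a}$ lies in $\mathbb{R}_{>0}[M]$; unique factorization in $\mathbb{C}[M]$ then forces $P^{\lambda}\in\mathbb{R}[M]$, whence $q\in\mathbb{R}[M]^{+}$ --- no $b$-th root of $P$ itself is ever needed. Finally, you cannot simply absorb $\chi^{-m}$ into $p$: that translates $\NP(p)$ to $\Delta_{-K_{X}}-m$, violating the required identity $\NP(p)=\Delta_{-K_{X}}$. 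One must instead prove $m=0$, which the paper does by comparing Newton polytopes via Corollary~\ref{cor:7} using the reflexivity of $\Delta_{-K_{X}}$.
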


\begin{proof}
  Assume \eqref{item:16} and choose an integer $\nu>0$ such that
  $q\coloneqq p^{\nu}\in \mathbb{R}_{>0}[M]$. We have that $\supp(q)$
  is unimodular and $ \NP(q)=\nu \NP(p)= \nu\,  \Delta_{-K_{X}}$, and by
  Lemma \ref{lem:6}\eqref{item:2} we also have
  \begin{displaymath}
\mu(q)=     \mu(p^{\nu})=\nu^{n} p^{(n+1)(\nu-1)} \mu(p) =\nu^{n} p^{(n+1)(\nu-1)+n} = \nu^{n} q^{n+1-1/\nu}.
  \end{displaymath}
By \eqref{eq:13}  $X$ admits a projectively induced  K\"ahler-Einstein form with
  constant~$\nu^{-1}$.

  Conversely assume that $X$ admits a projectively induced
  K\"ahler-Einstein form with constant $\lambda>0$, so that the
  conditions in \eqref{eq:13} holds.  With notation as therein~set
\begin{equation}
  \label{eq:30}
 q=\gamma \, p^{\lambda} \quad \text{ with } \gamma=(c\lambda^{n})^{-1/(n+1)}.
\end{equation}
Hence $q\in \mathbb{R}[M]^{+}$ because by \eqref{eq:13} it is a
rational function that has a positive integral power lying in
$\mathbb{R}_{>0}[M]$. Furthermore $\supp(q)$ is unimodular and
\begin{math}
  \NP(q)= \lambda\NP(p) = \Delta_{-K_{X}}
\end{math}
and by Lemma \ref{lem:6}(\ref{item:1},\ref{item:2})
\begin{displaymath}
  \mu(q)=\mu(\gamma p^{\lambda}) =\gamma^{n+1}\lambda^{n} p^{(n+1)(\lambda-1)}\mu(p)= \gamma^{n+1}\lambda^{n}
p^{(n+1)(\lambda-1)} c \chi^{m} p^{n+1-\lambda}
= \chi^{m}  q^{n}.
\end{displaymath}
Considering the associated Newton polytopes and applying Corollary
\ref{cor:7} we get $m=0$, thus proving \eqref{item:16}.
\end{proof}

\begin{remark}
  \label{rem:7}
There are Laurent polynomials
with positive coefficients that are powers of Laurent polynomials with
some negative coefficients. As an example take
 $p=2+2x- x^{2}+2x^{3}+2x^{4} \in \mathbb{R}[x^{\pm1}]$, for which
  \begin{displaymath}
    p^{2}=
    4 + 8x + 4x^3 + 17x^4 + 4x^5 + 8x^{7} + 4x^{8}\in \mathbb{R}_{>0}[x^{\pm1}].
  \end{displaymath}
  Hence in \eqref{eq:30} we cannot ensure that $q$ has positive
  coefficients, and so this Laurent polynomial is not necessarily
  given by a toric full immersion as in \eqref{eq:23}.
\end{remark}

We introduce a more flexible version of the Einstein condition in our
algebraic setting that is better suited for a recursive analysis.

\begin{definition}
\label{def:3}
Let $p\in \mathbb{C}[M]$ with $\supp(p)$ is unimodular. We say that
$p$ satisfies the \emph{generalized (algebraic) Einstein condition
  (GEC)} if $ \mu(p) \mid p^{\kappa}$ for some
$ \kappa\in \mathbb{Z}_{>0}$.
\end{definition}

As a consequence of our previous results we can show that this
condition is hereditary.  To properly state this property, for a
Laurent polynomial $p \in \mathbb{C}[M] \setminus\{0\} $ and a cone
$\sigma\in \Sigma_{\NP(p)}$ we define the \emph{translated initial
  part} of~$p$ in the direction of~$\sigma$~as
\begin{displaymath}
  p_{\sigma}=\chi^{-m} \init_{\sigma}(p) \in \mathbb{C}[M\cap \sigma^{\bot}]
\end{displaymath}
for any $m\in \supp(\init_{\sigma}(p))$, where
$\sigma^{\bot} \subset M_{\mathbb{R}}$ denotes the orthogonal
subspace~of~$\sigma$.

\begin{proposition}
\label{prop:5}
Let $p \in \mathbb{C}[M] $ with $\supp(p)$ unimodular and satisfying
GEC, and let $\sigma\in \Sigma_{\NP(p)}$. Then the translated initial
part $p_{\sigma} \in \mathbb{C}[M\cap \sigma^{\bot}]$ also verifies
that $\supp(p_{\sigma}) $ is unimodular as a subset of the lattice
$M \cap \sigma^{\bot}$ and satisfies GEC.
\end{proposition}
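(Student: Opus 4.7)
The strategy is to treat the two assertions separately: unimodularity of $\supp(p_\sigma)$ in $M \cap \sigma^\bot$ follows from an inspection of the local structure of $\NP(p)$ at the vertices of $F_\sigma$, while the GEC for $p_\sigma$ is obtained from the factorization formula of Corollary~\ref{cor:8} combined with the multiplicativity of the initial part operation with respect to products.

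For unimodularity, recall that $\supp(\init_\sigma(p)) = \supp(p) \cap F_\sigma$, where $F_\sigma \preceq \NP(p)$ is the face corresponding to $\sigma$, and translating by a vector in this support places $\supp(p_\sigma)$ inside $\sigma^\bot \cap M$. Every vertex of $\NP(p_\sigma)$ is the translate of a vertex $v$ of $F_\sigma$, which is also a vertex of $\NP(p)$. By unimodularity of $\supp(p)$ the edges of $\NP(p)$ at $v$ correspond to the elements of a basis $B_v$ of $M$ via $v + b \in \supp(p)$. The edges of $F_\sigma$ at $v$ form a distinguished subset of these; the corresponding elements of $B_v$ lie in $\sigma^\bot$ and span it over $\mathbb{R}$. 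Being part of a basis of $M$, they form a basis of $M \cap \sigma^\bot$, which is the desired unimodularity condition for $\supp(p_\sigma)$.

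For GEC, assume $\mu(p) \mid p^\kappa$ in $\mathbb{C}[M]$. Applying $\init_\sigma$ and using the multiplicativity of the initial part (a consequence of $\NP(fg) = \NP(f) + \NP(g)$ together with the compatibility of faces under Minkowski sums) gives $\init_\sigma(\mu(p)) \mid \init_\sigma(p)^\kappa$. Inserting the factorization
\begin{displaymath}
\init_\sigma(\mu(p)) = \mu(\init_\sigma(p)) \prod_{i} p|_{F_\sigma^{(i)}}
\end{displaymath}
from Corollary~\ref{cor:8} and dropping the extra factors (using that $\mathbb{C}[M]$ is an integral domain) yields $\mu(\init_\sigma(p)) \mid \init_\sigma(p)^\kappa$. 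Writing $\init_\sigma(p) = \chi^m p_\sigma$, Lemma~\ref{lem:6}\eqref{item:1} converts this into $\mu(p_\sigma) \mid p_\sigma^\kappa$ in $\mathbb{C}[M]$, after absorbing the resulting monomial factor as a unit. Finally this divisibility descends to $\mathbb{C}[M \cap \sigma^\bot]$, since $\mathbb{C}[M]$ is a free module over that subring with basis given by any transversal of $M \cap \sigma^\bot$ in $M$, so that an identity of the form $p_\sigma^\kappa = \mu(p_\sigma) \cdot q$ in $\mathbb{C}[M]$ forces $q$ to lie in $\mathbb{C}[M \cap \sigma^\bot]$.

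The main technical point is the careful handling of the multiplicativity of initial parts together with the cancellation in the factorization formula; once these two ingredients are secured, the rest of the argument is a formal manipulation with the $\mu$-operator using the basic properties in Lemma~\ref{lem:6}.
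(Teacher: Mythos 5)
Your proposal is correct and follows essentially the same route as the paper: apply $\init_\sigma$ to the divisibility $\mu(p)\mid p^{\kappa}$ via multiplicativity of initial parts, then use the factorization of Corollary~\ref{cor:8} to extract $\mu(\init_\sigma(p))$ as a divisor, and pass to $p_\sigma$ with Lemma~\ref{lem:6}. The extra details you supply (the vertex-figure argument for unimodularity of $\supp(p_\sigma)$ in $M\cap\sigma^{\bot}$, and the descent of divisibility from $\mathbb{C}[M]$ to $\mathbb{C}[M\cap\sigma^{\bot}]$) are points the paper leaves implicit, and both are handled correctly.
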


\begin{proof}
  It is clear that $\supp(p_{\sigma})$ is unimodular as a subset of
  $M\cap \sigma^{\bot}$ because so is the case for $\supp(p)$ as a
  subset of $M$.

  On the other hand there exists an integer $\kappa>0$ such that
  $\mu(p)$ divides $ p^{\kappa}$ because $p$ satisfies GEC.  By the
  multiplicativity of initial parts we get that
  $\init_{\sigma}(\mu(p))$ divides~$\init_{\sigma}(p)^{\kappa}$, and
  by Corollary \ref{cor:8} we have that $ \mu(\init_{\sigma}(p)) $
  divides $\init_{\sigma}(\mu(p))$.  Hence $\mu(p_{\sigma})$ divides
  $p_{\sigma}^{\kappa}$ and so $p_{\sigma}$ satisfies GEC, as stated.
\end{proof}

Finally we state the application of this property to the study of
projectively induced K\"ahler-Einstein forms on $X$.  We denote by
$\Sigma$ the fan of this Fano toric manifold.

\begin{corollary}
  \label{cor:9}
  Assume that $X$ admits a projectively induced K\"ahler-Einstein
  form, and for a cone $\sigma\in \Sigma$ denote by 
  $F_{\sigma}\preceq \Delta_{-K_{X}}$  the associated face and
  $v\in F_{\sigma}$ a lattice point in it.  Then there exists
  $q\in \mathbb{C}[M\cap \sigma^{\bot}]$ with
  $\supp(q) \subset M\cap \sigma^{\bot}$ unimodular and
  $\NP(q)=F_{\sigma}-v$ satisfying GEC.
\end{corollary}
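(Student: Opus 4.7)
The plan is to deduce the corollary almost mechanically from the two main results already in place: Proposition \ref{prop:1} provides a ``master'' Laurent polynomial encoding the K\"ahler-Einstein form on $X$, and Proposition \ref{prop:5} propagates the condition GEC to translated initial parts. The only real work will be bookkeeping the translations so that the Newton polytope lands at $F_{\sigma}-v$ for the specified lattice point $v$.

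First I would invoke Proposition \ref{prop:1} to obtain $p\in \mathbb{R}[M]^{+}$ with $\supp(p)$ unimodular, $\NP(p)=\Delta_{-K_{X}}$, and $\mu(p)=p^{n}$. In particular $\mu(p)\mid p^{n}$, so $p$ satisfies GEC with $\kappa=n$. Next I would apply Proposition~\ref{prop:5} to this $p$ and the given cone $\sigma$, producing the translated initial part $p_{\sigma}=\chi^{-m_{0}}\init_{\sigma}(p)\in \mathbb{C}[M\cap \sigma^{\bot}]$ for some $m_{0}\in \supp(\init_{\sigma}(p))$, with $\supp(p_{\sigma})$ unimodular in $M\cap\sigma^{\bot}$ and $p_{\sigma}$ satisfying GEC.

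It remains to adjust by a monomial so that the Newton polytope becomes $F_{\sigma}-v$. Since $\NP(p)=\Delta_{-K_{X}}$ has normal fan equal to $\Sigma_{X}=\Sigma$, the face $\NP(p)_{\sigma}$ coincides with $F_{\sigma}$; and unimodularity of $\supp(p)$ forces every vertex of $\Delta_{-K_{X}}$ (hence every vertex of $F_{\sigma}$) to lie in $\supp(p)$, so $\NP(\init_{\sigma}(p))=F_{\sigma}$ and $\NP(p_{\sigma})=F_{\sigma}-m_{0}$. Setting $q=\chi^{m_{0}-v}\, p_{\sigma}$ gives $\NP(q)=F_{\sigma}-v$. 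Since $m_{0},v\in F_{\sigma}$ lie in a common affine translate of $\sigma^{\bot}$, the difference $m_{0}-v$ belongs to $M\cap\sigma^{\bot}$, so $q\in \mathbb{C}[M\cap\sigma^{\bot}]$. Translation by a Laurent monomial preserves unimodularity of the support, and by Lemma \ref{lem:6}\eqref{item:1} the same monomial shift on both sides of $\mu(p_{\sigma})\mid p_{\sigma}^{\kappa}$ (using that monomials are units in the Laurent algebra) yields $\mu(q)\mid q^{\kappa}$, so $q$ still satisfies GEC.

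I do not anticipate a serious obstacle: the substantive content is entirely contained in Propositions \ref{prop:1} and \ref{prop:5}. The one subtlety to verify carefully is the identification of the face $\NP(p)_{\sigma}$ with $F_{\sigma}$, which rests on the coincidence between the normal fan of $\Delta_{-K_{X}}$ and the fan $\Sigma_{X}$; once this is noted, the rest is a direct monomial translation.
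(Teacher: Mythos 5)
Your proposal is correct and follows exactly the paper's route: the paper's proof is a one-line citation of Propositions \ref{prop:1} and \ref{prop:5}, and you have simply filled in the (correct) bookkeeping about identifying $\NP(p)_{\sigma}$ with $F_{\sigma}$ and translating by a monomial, which preserves unimodularity and GEC by Lemma \ref{lem:6}\eqref{item:1}.
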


\begin{proof}
  This follows readily from Propositions~\ref{prop:1} and \ref{prop:5}.
\end{proof}




\section{Applications}
\label{sec:applications}

In this section we apply our constructions and results to the study of
the algebraic Einstein conditions in several concrete situations. 

For simplicity here we assume that $\mathbb{T}$ is the standard torus
$(\mathbb{C}^{\times})^{n}$, so that $M=N=\mathbb{Z}^{n}$,
$M_{\mathbb{R}}=N_{\mathbb{R}}=\mathbb{R}^{n}$ and
$\mathbb{C}[M]= \mathbb{C}[x_{1}^{\pm1}, \dots, x_{n}^{\pm1}]$. To
ease the exposition we switch to the usual notation for~monomials.

\subsection{Small dimensions}
\label{sec:small-dimensions}

We first consider the $1$-dimensional case.

\begin{proposition}
\label{prop:2}
Let $p\in \mathbb{C}[x^{\pm1}]$ with $\supp(p)$ unimodular. Then $p$
satisfies GEC 
if and only if
\begin{displaymath}
    p=c \, x^{m} (x+\xi)^{\nu}  
  \end{displaymath}
  with $c,\xi\in \mathbb{C}^{\times}$, $m\in \mathbb{Z}$ and
  $\nu\in \mathbb{Z}_{> 0}$.
\end{proposition}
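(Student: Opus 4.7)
The plan is to prove both implications by expressing $\mu(p)$ explicitly via Lemma \ref{lem:12} after factoring $p$ over $\mathbb{C}$. The reverse implication is immediate: if $p = c\, x^m (x+\xi)^\nu$, then Lemma \ref{lem:12} with $t = 1$ gives $\mu(p) = c^2 \nu \xi \, x^{2m+1}(x+\xi)^{2\nu - 2}$, which plainly divides $p^2$ in $\mathbb{C}[x^{\pm 1}]$.

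For the forward implication, I write $p = c\, x^m \prod_{k=1}^{t}(x+\xi_k)^{e_k}$ with distinct $\xi_k \in \mathbb{C}^\times$ and positive integers $e_k$ (the case where $p$ is a monomial is excluded by the proposition's requirement $\nu > 0$ and may be assumed away). The goal is to show $t = 1$, so I assume $t \geq 2$ and seek a contradiction. By Lemma \ref{lem:12},
\[
\mu(p) = c^2 x^{2m+1} R(x), \qquad R(x) = \sum_{k=1}^{t} e_k \xi_k (x+\xi_k)^{2e_k - 2} \prod_{l \neq k}(x+\xi_l)^{2 e_l}.
\]
The GEC condition $\mu(p) \mid p^{\kappa}$ in $\mathbb{C}[x^{\pm 1}]$ forces $R(x)$ to divide $x^N \prod_k (x+\xi_k)^{\kappa e_k}$ in $\mathbb{C}[x]$ for some $N$, so every root of $R$ must lie in $\{0, -\xi_1, \ldots, -\xi_t\}$.

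The central step is a multiplicity count: at $x = -\xi_j$, the summand indexed by $k = j$ contributes a factor $(x+\xi_j)^{2e_j - 2}$ while every summand with $k \neq j$ contributes $(x+\xi_j)^{2e_j}$, so the multiplicity of $-\xi_j$ in $R$ is exactly $\max(0, 2e_j - 2)$, and a direct check shows that $R/(x+\xi_j)^{2e_j - 2}$ evaluated at $-\xi_j$ equals $e_j \xi_j \prod_{l \neq j}(\xi_l - \xi_j)^{2e_l} \neq 0$. Dividing out the factor $\prod_{j:\, e_j > 1}(x+\xi_j)^{2e_j - 2}$ therefore leaves a polynomial $Q(x)$ of degree $2t - 2$ with $Q(-\xi_j) \neq 0$ for every $j$. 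Since the roots of $Q$ must still lie in $\{0, -\xi_1, \ldots, -\xi_t\}$ and none of the $-\xi_k$ is a root, all roots of $Q$ are concentrated at $0$, forcing $Q(x) = \alpha\, x^{2t - 2}$. The leading coefficient of $R$ equals $\sum_k e_k \xi_k$, which is nonzero by the unimodularity of $\supp(p)$ at the vertex $m + \nu$ of $\NP(p)$, so $\alpha \neq 0$.

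The contradiction comes from evaluating at $x = 0$: since $t \geq 2$ we have $2t - 2 > 0$ and thus $Q(0) = 0$, which implies $R(0) = 0$. On the other hand, a direct computation gives
\[
R(0) = \Bigl(\prod_{l} \xi_l^{2e_l}\Bigr) \sum_{k=1}^{t} \frac{e_k}{\xi_k},
\]
so $\sum_k e_k/\xi_k = 0$. This contradicts the unimodularity of $\supp(p)$ at the vertex $m$ of $\NP(p)$, which requires the coefficient of $x^{m+1}$ in $p$ --- equal to $c\bigl(\prod_l \xi_l^{e_l}\bigr) \sum_k e_k/\xi_k$ --- to be nonzero. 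The main obstacle is the exact multiplicity bookkeeping for $R$ at each $-\xi_j$; once this is in hand, the rest is a routine unwinding of Lemma \ref{lem:12} and the unimodularity constraint.
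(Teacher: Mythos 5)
Your proposal is correct and follows essentially the same route as the paper: both factor $p$ over $\mathbb{C}$, apply Lemma \ref{lem:12}, identify the cofactor $Q$ (the paper's $q=\sum_k e_k\xi_k\prod_{l\ne k}(x+\xi_l)^2$) of Newton polytope $[0,2t-2]$, and use unimodularity at the two vertices of $\NP(p)$ to see that $Q$ is coprime to $p$ with nonzero extreme coefficients, forcing $t=1$. Your handling of the degenerate monomial case and the explicit multiplicity bookkeeping at each $-\xi_j$ are fine and match the paper's implicit computations.
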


\begin{proof}
  Set $\NP(p)=[m,m']$ with $m\le m'$. Write
  \begin{displaymath}
    p= \sum_{j=0}^{s} c_{j}x^{m_{j}}=
    c\, x^{m} \prod_{k=1}^{t}(x+\xi_{k})^{\nu_{k}}    
  \end{displaymath}
  with $m_{j}\in \mathbb{Z}$, $\nu_{k}\in \mathbb{Z}_{>0}$ and
  $\xi_{k}\in \mathbb{C}^{\times}$ such that
  $m=m_{0}<\cdots< m_{s}=m'$ and $\xi_{k}\ne \xi_{l}$ for all
  $k\ne l$. By Lemma \ref{lem:12} we have
  \begin{displaymath}
    \mu(p)= c^{2}\, x^{2m+1} q
\prod_{k=1}^{t}(x+\xi_{k})^{2\nu_{k}-2}  \quad \text{  with }
q= \sum_{k=1}^{t} \nu_{k}\xi_{k} \prod_{l\ne k}(x+\xi_{l})^{2}.
\end{displaymath}
The leading and tail coefficients of $q$ are respectively equal to
\begin{displaymath}
  \sum_{k=1}^{t} \nu_{k}\xi_{k} = c^{-1}c_{s-1} \and   \sum_{k=1}^{t} \nu_{k}\xi_{k} \prod_{l\ne k}\xi_{l}^{2}= c^{-1}c_{1} \prod_{k=1}^{t} \xi_{k}^{2-\nu_{k}}.
 \end{displaymath}
 These quantities are nonzero because $\supp(p)$ is unimodular, and so
 $\NP(q)=[0,2t-2]$.  Since this polynomial is coprime with $p$, we
 have that $\mu(p)$ divides $ p^{\kappa}$ for a positive
 integer~$\kappa$ if and only if $q$ is a monomial. This is equivalent
 to the fact that $t=1$, which gives the statement.
\end{proof}

\begin{corollary}
  \label{cor:1}
  Let $\omega$ be a projectively induced toric K\"ahler form on
  $\mathbb{P}^{1}$ induced by a toric full immersion
  $\varphi\colon \mathbb{P}^{1}\to \mathbb{P}^{s}$ whose associated
  Laurent polynomial $p_{\varphi}\in \mathbb{R}_{>0}[x^{\pm1}]$
  satisfies GEC.  Then
  $(\mathbb{P}^{1},\omega)\simeq (\mathbb{P}^{1}, \nu \,
  \omega_{\FS})$ with $\nu\in \mathbb{Z}_{>0}$.
\end{corollary}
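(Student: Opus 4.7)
The plan is to extract the explicit shape of $p_{\varphi}$ from Proposition \ref{prop:2}, use the positivity hypothesis to fix the free parameter as a positive real, and then identify $\omega$ with $\nu\omega_{\FS}$ via a toric rescaling of $\mathbb{P}^{1}$, up to a pluriharmonic correction of the K\"ahler potential.

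First I will apply Proposition \ref{prop:2}: by Lemma \ref{lem:14} the set $\supp(p_{\varphi})$ is unimodular, and GEC is assumed, so
\[
p_{\varphi}=c\, x^{m}(x+\xi)^{\nu}
\]
with $c,\xi\in\mathbb{C}^{\times}$, $m\in\mathbb{Z}$, $\nu\in\mathbb{Z}_{>0}$. Expanding by the binomial theorem, the coefficients of $p_{\varphi}$ are $c\binom{\nu}{k}\xi^{\nu-k}$ for $k=0,\dots,\nu$, and the hypothesis $p_{\varphi}\in\mathbb{R}_{>0}[x^{\pm 1}]$ forces both $c$ and $\xi$ to be positive reals (e.g.\ using that the leading coefficient is $c$ and the next one is $c\nu\xi$).

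Next, Lemma \ref{lem:16} gives the K\"ahler potential
\[
f_{\varphi}(u+iv) = \log p_{\varphi}(e^{2u}) = \log c + 2mu + \nu\log(e^{2u}+\xi).
\]
Setting $u' = u - \tfrac{1}{2}\log\xi$ (which is real since $\xi>0$) and using $\log(e^{2u}+\xi) = \log\xi + \log(1+e^{2u'})$, I rewrite
\[
f_{\varphi}(u+iv) = \bigl(\log c + \nu\log\xi + 2mu\bigr) + \nu\log(1+e^{2u'}).
\]
The first summand is real-affine in $u=(z+\overline z)/2$, hence pluriharmonic, and so annihilated by $i\partial\overline{\partial}$.

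Finally, the substitution $u+iv \mapsto u'+iv$ is induced by the toric automorphism $\psi\colon\mathbb{P}^{1}\to\mathbb{P}^{1}$, $x\mapsto x/\sqrt{\xi}$, which is well-defined precisely because $\xi\in\mathbb{R}_{>0}$. The surviving summand $\nu\log(1+e^{2u'})$ equals $\psi^{*}(\nu f_{\FS})$ (cf.\ Example \ref{exm:3}), so $\omega = i\partial\overline{\partial}f_{\varphi}=\psi^{*}(\nu\omega_{\FS})$, yielding the claimed isomorphism $(\mathbb{P}^{1},\omega)\simeq(\mathbb{P}^{1},\nu\omega_{\FS})$. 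The only delicate point is the positivity of $\xi$: without it the rescaling by $1/\sqrt{\xi}$ would fail to be a real biholomorphism matching the K\"ahler structures, but GEC combined with positivity of the coefficients of $p_{\varphi}$ makes this automatic.
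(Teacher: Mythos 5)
Your proposal is correct and follows essentially the same route as the paper: unimodularity of $\supp(p_{\varphi})$ via Lemma \ref{lem:14}, the factorization $p_{\varphi}=c\,x^{m}(x+\xi)^{\nu}$ from Proposition \ref{prop:2}, and then Lemma \ref{lem:16} together with a toric rescaling of $\mathbb{P}^{1}$ by $\xi^{1/2}$ to identify $\omega$ with $\nu\,\omega_{\FS}$. The only difference is that you spell out the deduction $c,\xi>0$ from the positivity of the coefficients (via the top two binomial coefficients), which the paper leaves implicit.
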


\begin{proof}
  By Lemma \ref{lem:14} we have that $\supp(p_{\varphi})$ is unimodular, and so 
  by Proposition~\ref{prop:2} we have that
  \begin{math}
    p_{\varphi}= cx^{m}(x+\xi)^{\nu}
  \end{math}
for an
  integer $\nu>0$ and real numbers $c, \xi>0$. Applying  Lemma
  \ref{lem:16} we get
  \begin{displaymath}
    \omega=i\, \partial \overline{\partial} \log (c \, e^{2m u}(e^{2u}+\xi)^{\nu}) = \nu \, i\, \partial \overline{\partial} \log (e^{2u}+\xi)
    = \nu A^{*}\omega_{\FS}
  \end{displaymath}
  for any linear map
  $A \colon (z_{0}:z_{1})\mapsto (\alpha z_{0}:z_{1})$ with
  $\alpha\in \mathbb{C}$ such that $|\alpha|=\xi^{1/2}$.
\end{proof}

We now focus on the $2$-dimensional case to prove a result that
severely restricts the possible Newton polygons of the bivariate
Laurent polynomials that satisfy GEC. It is a generalization of
\cite[Lemma 2.20]{MS:TKEmicps}.

Recall that for an edge $E$ of a lattice polygon
$\Delta \subset \mathbb{R}^{2}$ we denote by $E'$ its \emph{adjacent
  segment}, defined as the intersection of $\Delta$ with the inner
parallel lattice line that is closest to~$E$ (Definition
\ref{def:8}). For a lattice segment $F\subset \mathbb{R}^{2}$ we
denote by $\ell(F)$ its \emph{lattice length}, defined as the number
of lattice points in $F$ minus $1$.

Recall also that $p|_{S}$ denotes the restriction of a Laurent
polynomial $p\in \mathbb{C}[x^{\pm1}, y^{\pm1}]$ to a subset
$S\subset \mathbb{R}^{2}$ as in \eqref{eq:46}.

\begin{proposition}
  \label{prop:17}
  Let $p\in \mathbb{C}[x^{\pm1}, y^{\pm1}]$ with $\supp(p)$ unimodular
  and satisfying GEC.  Let $E$ be an edge of $\NP(p)$ and $E'$ its
  adjacent segment.
  \begin{enumerate}[leftmargin=*]
  \item \label{item:9} Let $v$ be a vertex of $\NP(p)$ and $a,b$ be
    the basis of $\mathbb{Z}^{2}$ at $v$ induced by $\supp(p)$ with
    $v+a\in E$. Then there exist $c,c'\in \mathbb{C}^{\times}$ such that
    \begin{displaymath}
      p|_{E}=c\chi^{v}(\chi^{a}+\xi)^{\ell(E)} \and      p|_{E'}=c'\chi^{v+b}(\chi^{a}+\xi)^{\ell(E')}.
    \end{displaymath}
  \item \label{item:10} Let $F$ be another edge of $\NP(p)$ and $F'$
    its adjacent segment. Then
  \begin{displaymath}
\frac{\ell(F')}{\ell(F)}=\frac{\ell(E')}{\ell(E)}. 
  \end{displaymath}
  \end{enumerate}
\end{proposition}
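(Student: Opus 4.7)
The plan is to obtain part (1) by combining the hereditary property of GEC with the one-dimensional classification, and then to leverage this in part (2) by matching coefficients at lattice points common to the relevant pieces.

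For part (1), let $\tau \in \Sigma^{1}_{\NP(p)}$ be the ray with $F_\tau = E$. By Proposition \ref{prop:5} the translated initial part $p_\tau \in \mathbb{C}[M\cap\tau^\perp]$ has unimodular support and satisfies GEC in a rank-$1$ lattice; Proposition \ref{prop:2} then forces $p_\tau$ to be a shifted power of a single binomial, which after translating back reads $p|_E = c\,\chi^v(\chi^a + \xi)^{\ell(E)}$ for some $c,\xi \in \mathbb{C}^\times$. For the formula on $E'$, I would apply Theorem \ref{thm:2} in the form $\init_\tau(\mu(p)) = \mu(p|_E)\, p|_{E'}$, combine it with the divisibility $\mu(p)\mid p^{\kappa}$ (which implies $\init_\tau(\mu(p)) \mid (p|_E)^\kappa$ by multiplicativity of initial parts), and use Lemma \ref{lem:12} to write $\mu(p|_E)$ as a monomial multiple of $(\chi^a+\xi)^{2\ell(E)-2}$. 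Since $\mathbb{C}[M]$ is a UFD and $\chi^a+\xi$ is irreducible, this divisibility forces $p|_{E'}$ to be a monomial times a power of $\chi^a+\xi$; that the exponent equals $\ell(E')$ comes from the fact that both endpoints of $E'$, namely $v+b$ and the analogous point $v'+b'$ at the other vertex $v'$ of $E$, lie in $\supp(p)$ by the unimodularity assumption applied at $v$ and $v'$, so the Newton segment of $p|_{E'}$ is exactly $E'$.

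For part (2), I first treat adjacent edges $E,F$ sharing a common vertex $v$. Applying part (1) at $v$, with basis $(a,b)$ such that $v+a\in E$ and $v+b\in F$, yields
\begin{align*}
p|_E &= c_E\,\chi^{v}(\chi^a + \xi_E)^{\ell(E)}, & p|_{E'} &= c_{E'}\,\chi^{v+b}(\chi^a + \xi_E)^{\ell(E')},\\
p|_F &= c_F\,\chi^{v}(\chi^b + \xi_F)^{\ell(F)}, & p|_{F'} &= c_{F'}\,\chi^{v+a}(\chi^b + \xi_F)^{\ell(F')}.
\end{align*}
The degenerate case where $\ell(E')$ or $\ell(F')$ vanishes is settled by the geometric observation that the lattice point $v+a+b$ lies on $E'$ iff it belongs to $\NP(p)$ iff it lies on $F'$; hence $\ell(E')\ge 1 \Leftrightarrow \ell(F')\ge 1$ and both ratios vanish simultaneously.

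In the generic case $\ell(E'),\ell(F')\ge 1$, the point $v+a+b$ lies in $E'\cap F'$, and I would read the coefficients of $\chi^{v}, \chi^{v+a}, \chi^{v+b}$ and $\chi^{v+a+b}$ in $p$ from the two available formulas at each. Using the $\chi^v$-identity $c_E\xi_E^{\ell(E)} = c_F\xi_F^{\ell(F)}$ as a normalization, the other three scalar equations collapse, after elimination of the $c$'s and $\xi$'s, to $\ell(E')\ell(F) = \ell(F')\ell(E)$. For non-adjacent edges the equality of ratios then propagates around the cyclic boundary of $\NP(p)$. I expect the main obstacle to be the bookkeeping in this four-equation system, together with disentangling the degenerate case so that unimodularity and convexity of $\NP(p)$ do the work cleanly.
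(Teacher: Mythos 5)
Your proposal is correct and follows essentially the same route as the paper: part (1) via the hereditary/initial-part machinery (Theorem \ref{thm:2} plus the divisibility $\mu(p)\mid p^{\kappa}$ and the one-dimensional classification of Proposition \ref{prop:2}), and part (2) by reducing to adjacent edges and eliminating the parameters from the four coefficient-overlap equations at $v$, $v+a$, $v+b$, $v+a+b$. Your explicit treatment of the degenerate case $\ell(E')=0$ and of why $\NP(p|_{E'})=E'$ are points the paper leaves implicit, but the argument is the same.
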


The figure below
illustrates the objects in this proposition.
  \begin{figure}[!htbp]
    \begin{tikzpicture}[scale=1.21]
\begin{scope}
  \filldraw[lightgray,rounded corners=0pt] (0,0) --  (-2.4,1.2) -- (-3.75-1.35,0.9-0.3) -- (-2.9-1.15,-1.15+0.25)
  -- (-1.75,-1.4)
          -- cycle;
        \end{scope}

      \draw[-] (0,0) -- (-2.4,1.2);
      \draw[-]  (-0.75,-0.6) --  (-0.75-3,-0.6+1.5);
      \draw[-] (0,0) --  (-1.75,-1.4);
      \draw[-] (-0.9,0.45) -- (-2.9,-1.15);
      \node at (0,0) [circle,fill,inner sep=1.5pt]{};
      \node at (-0.75,-0.6) [circle,fill,inner sep=1.5pt]{};
      \node at (-0.9,0.45) [circle,fill,inner sep=1.5pt]{};
      \node at (-1.65,-0.15) [circle,fill,inner sep=1.5pt]{};
      \node at (0.3,0) {$\scriptstyle  v$};
      \node at (-0.75+0.5,-0.6) {$\scriptstyle  v+a $};
      \node at (-2.4,-0.15) {$\scriptstyle  v+a+b $};
      \node at (-0.4,0.45) {$\scriptstyle  v+b $};
      \node at (-2.2,0.45) {$\scriptstyle  F' $};
\node at (-1.5,1.1) {$\scriptstyle  F $};
\node at (-2.2,-0.9) {$\scriptstyle  E' $};
\node at (-1.0,-1.1) {$\scriptstyle  E$};
\node at (-4.9,-0.5) {$\scriptstyle  \NP(p)$};

    \end{tikzpicture}
    \caption{Edges and adjacent segments}
\label{fig:1}
  \end{figure}
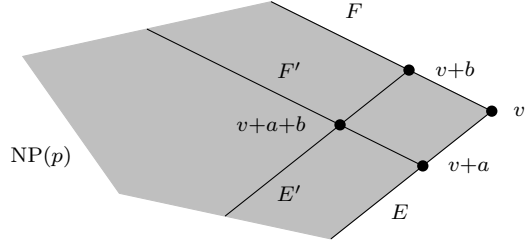

\begin{proof}
  For \eqref{item:9} let $\tau$ be the inner normal ray of $E$ so that
  this edge realizes as $E=\NP(p)_{\tau}$, and set
  $H=\NP(\mu(p))_{\tau}$ for the corresponding edge of the Newton
  polygon of~$\mu(p)$. Then Theorem~\ref{thm:2} gives the
  factorization
  \begin{equation}
    \label{eq:47}
  \mu(p)|_{H}= \mu(p|_{E}) \, p|_{E'}.
\end{equation}
By GEC we have that $\mu(p)$ divides $ p^{\kappa}$ for some integer
$\kappa>0$. By the multiplicativity of initial parts $\mu(p) |_{H}$
divides $ (p|_{E})^{\kappa}$, and so the factorization \eqref{eq:47}
implies that $\mu(p|_{E}) \, p|_{E'} $ divides $ (p|_{E})^{\kappa} $.
Hence the restriction $p|_{E}$ is a univariate Laurent polynomial that
satisfies GEC and whose Newton polytope coincides with $E$. With 
Proposition~\ref{prop:2} we get 
\begin{displaymath}
      p|_{E}=c\chi^{v}(\chi^{a}+\xi)^{\ell(E)}. 
\end{displaymath}
The corresponding expression for $p|_{E'}$ follows from the fact that
this is univariate Laurent polynomial is a factor of
$(\chi^{a}+\xi)^{\kappa}$ whose Newton polytope coincides with $E'$.

For \eqref{item:10} it is enough to consider the case when $E$ and $F$
share a vertex, as in Figure~\ref{fig:1}. By \eqref{item:9} we have
  \begin{displaymath}
    \begin{array}{ll} p|_{E}=c_{1} \,
\chi^{v}(\chi^{a}+\xi_{1})^{\ell(E)}, \quad &
p|_{F}=c_{2} \,
\chi^{v}(\chi^{b}+\xi_{2})^{\ell(F)}, \\
p|_{E'}=c'_{1} \,
      \chi^{v+b}(\chi^{a}+\xi_{1})^{\ell(E')}, \quad &
p|_{F'}=c'_{2} \,
\chi^{v+a}(\chi^{b}+\xi_{2})^{\ell(F')},
    \end{array}
  \end{displaymath}
  with $c_{i},c'_{i},\xi_{i} \in \mathbb{C}^{\times}$.  Examining the overlaps of
  these Laurent polynomials at the lattice points $v$, $v+a$,
  $v+b$ and $v+a+b$ we obtain the equations
  \begin{displaymath}
    \begin{array}{ll}
    c_{1}\xi_{1}^{\ell(E)}=c_{2}\xi_{2}^{\ell(F)}, \quad  &     \ell(E)\, c_{1}\xi_{1}^{\ell(E)-1}= c'_{2}\xi_{2}^{\ell(F')}, \\
 c'_{1}\xi_{1}^{\ell(E')} = \ell(F)\, c_{2} \xi_{2}^{\ell(F)-1}, \quad & \ell(E') \, c'_{1}\xi_{1}^{\ell(E')-1}
    =\ell(F')\, c'_{2}\xi_{2}^{\ell(F')-1}.
    \end{array}
  \end{displaymath}
  The statement follows by considering the ratio between the product of
  the first equation by the fourth and that of the second by the third.
\end{proof}

 Passing to the geometric setting we obtain  the next consequence.

\begin{corollary}
  \label{cor:2}
  Let $X$ be a toric surface and $\varphi \colon X\to \mathbb{P}^{s}$
  a toric full immersion such that
  $p_{\varphi} \in \mathbb{R}_{>0}[x^{\pm1},y^{\pm1}]$ satisfies
  GEC. Then the ratio $ {\ell(E')}/{\ell(E)}$ for an edge
  $E \preceq \NP(p_{\varphi})$ and its adjacent segment $E'$ does not
  depend on the choice of the edge.

  In particular, if $X$ admits a projectively induced
  K\"ahler-Einstein form then the ratio $ {\ell(E')}/{\ell(E)}$ for an
  edge $E \preceq \Delta_{-K_{X}}$ and its adjacent segment $E'$ does
  not depend on the choice of the edge.
\end{corollary}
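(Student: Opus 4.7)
My plan is to deduce Corollary~\ref{cor:2} as a direct consequence of Proposition~\ref{prop:17}(\ref{item:10}), together with Proposition~\ref{prop:1} for the second assertion. The corollary is essentially a geometric repackaging of these already-established algebraic results, so no new machinery is required beyond invoking them correctly.

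For the first assertion, the hypotheses on $p_{\varphi}$ are precisely those of Proposition~\ref{prop:17}: its support is unimodular by Lemma~\ref{lem:14} applied to the toric full immersion $\varphi$, and GEC holds by assumption. Part~(\ref{item:10}) of that proposition then asserts that $\ell(F')/\ell(F) = \ell(E')/\ell(E)$ for any two edges $E, F$ of $\NP(p_\varphi)$, which is exactly the claim that this ratio is independent of the edge chosen.

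For the ``in particular'' assertion, I would invoke Proposition~\ref{prop:1}(\ref{item:16}) to extract a Laurent polynomial $p \in \mathbb{R}[x^{\pm 1}, y^{\pm 1}]^+$ with unimodular support, with $\NP(p)=\Delta_{-K_X}$, and satisfying $\mu(p) = p^n$. This identity trivially gives $\mu(p) \mid p^n$, so $p$ satisfies GEC. The critical observation is that Proposition~\ref{prop:17} makes no positivity assumption on the coefficients --- it is stated for any $p \in \mathbb{C}[x^{\pm 1}, y^{\pm 1}]$ with unimodular support satisfying GEC --- so it applies to this $p$ even though, as warned by Remark~\ref{rem:7}, $p$ itself need not arise as $p_{\varphi'}$ for a toric full immersion $\varphi'$. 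Applying Proposition~\ref{prop:17}(\ref{item:10}) to this $p$ and identifying $\NP(p) = \Delta_{-K_X}$ then yields the independence statement for the edges of the anticanonical polytope.

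There is no substantive obstacle here: all the heavy lifting was done in proving Proposition~\ref{prop:17}, and the only mildly subtle point is to recognize that the hypotheses of that proposition are flexible enough to accommodate the potentially sign-indefinite Laurent polynomial produced by Proposition~\ref{prop:1}, so that one is not forced to work with the scaled polynomial $p_{\varphi}$ whose Newton polygon is only $\lambda^{-1}\Delta_{-K_X}$ rather than $\Delta_{-K_X}$ itself.
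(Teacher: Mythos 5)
Your proof is correct. The first assertion is handled exactly as in the paper: Lemma~\ref{lem:14} gives unimodularity of $\supp(p_\varphi)$ and Proposition~\ref{prop:17}\eqref{item:10} gives the constancy of the ratio. For the second assertion you take a genuinely different route. The paper invokes Lemma~\ref{lem:15} to realize the K\"ahler-Einstein form by a toric full immersion $\varphi$, notes that $\Delta_{-K_X}=\lambda\,\NP(p_\varphi)$, and then reduces to the first statement; this leaves implicit the (true but not completely obvious) fact that constancy of $\ell(E')/\ell(E)$ over edges is preserved under dilating the polygon by $\lambda$, even though the individual ratios themselves change --- in coordinates where an edge $E$ runs from $(0,0)$ to $(\ell(E),0)$ one has $\ell(E')/\ell(E)=1-(c_1+c_2)/\ell(E)$ for integers $c_1,c_2$ determined by the adjacent primitive edge directions, so dilation replaces $\ell(E)$ by $\lambda\ell(E)$ uniformly and constancy is preserved. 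You instead normalize first via Proposition~\ref{prop:1}\eqref{item:16}, producing $p\in\mathbb{R}[x^{\pm1},y^{\pm1}]^{+}$ with unimodular support, $\NP(p)=\Delta_{-K_X}$ and $\mu(p)=p^{n}$, hence satisfying GEC, and apply Proposition~\ref{prop:17}\eqref{item:10} directly to $p$. Your observation that Proposition~\ref{prop:17} requires no positivity of coefficients --- so the possibly sign-indefinite polynomial of Proposition~\ref{prop:1} (cf.\ Remark~\ref{rem:7}) is admissible --- is exactly the point that makes this work, and it buys you a cleaner argument that avoids the dilation step entirely; the paper's version buys staying with an honest immersion polynomial at the cost of that implicit scaling remark.
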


\begin{proof}
  The first statement follows directly from Proposition
  \ref{prop:17}\eqref{item:10}. For the second, let $\omega$ be a
  projectively induced K\"ahler-Einstein form on $X$ with constant
  $\lambda>0$. By Lemma \ref{lem:15} it is induced by a toric full
  immersion $\varphi\colon X\to \mathbb{P}^{s}$, and by the Einstein
  condition $ \Delta_{-K_{X}}=\lambda \NP(p)$.  This statement follows
  then from the previous one.
\end{proof}

\begin{example}
  \label{exm:10}
  Let
  \begin{multline*}
    p=\alpha_{0} x ^{-1}y^{-1} +\alpha_{1}
    y ^{-1}+\alpha_{2}x y ^{-1} +\alpha_{3}x ^{2}y^{-1}
    +\alpha_{4}x ^{-1}\\ +\alpha_{5}+\alpha_{6}x +\alpha_{7}x ^{-1}y +\alpha_{7}y 
    \in \mathbb{C}[x^{\pm1},y^{\pm1}]
  \end{multline*}
  with $\alpha_{j}\ne 0$ for all $j\ne 5$. Its Newton
  polytope is the trapezoid~in~Figure~\ref{fig:2}.

  \begin{figure}[!htbp]
    \begin{tikzpicture}[scale=1.32]
      \draw[-] (-1, -1) -- (2,-1);
      \draw[-] (2, -1) -- (0,1);
      \draw[-] (0,1) -- (-1,1);
      \draw[-] (-1, 1) -- (-1,-1);
      \draw[-] (-1, 1) -- (1,-1);
      \draw[-] (-1, 0) -- (1,0);
      \node at (0,0) [circle,fill,inner sep=1.5pt]{};
      \node at (-1,-1) [circle,fill,inner sep=1.5pt]{};
      \node at (0,-1) [circle,fill,inner sep=1.5pt]{};
      \node at (1,-1) [circle,fill,inner sep=1.5pt]{};
      \node at (2,-1) [circle,fill,inner sep=1.5pt]{};
      \node at (1,0) [circle,fill,inner sep=1.5pt]{};
      \node at (0,1) [circle,fill,inner sep=1.5pt]{};
      \node at (-1,1) [circle,fill,inner sep=1.5pt]{};
      \node at (-1,0) [circle,fill,inner sep=1.5pt]{};
      \node at (0.8,0.5) {$\scriptstyle  F $};
      \node at (-0.2,0.5) {$\scriptstyle  F $};
      \node at (-0.5,-0.2) {$\scriptstyle  E' $};
      \node at (0.5,-1.3) {$\scriptstyle  E $};
    \end{tikzpicture}
    \caption{A reflexive trapezoid}
\label{fig:2}
\end{figure}
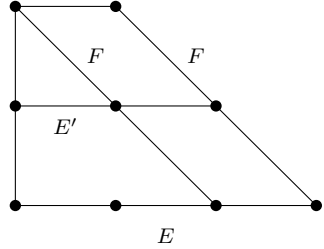

The edges
meeting at the lattice point $(2,-1)$ verify
  \begin{displaymath}
\frac{\ell(E')}{\ell(E)}=\frac{2}{3}\ne 1=\frac{\ell(F')}{\ell(F)},
  \end{displaymath}
and so   $p$ cannot satisfy GEC  by Proposition \ref{prop:17}\eqref{item:10}.
%
\end{example}

On the other hand, for the hexagon in Figure \ref{fig:3} we have
${\ell(E')}/{\ell(E)}=2$ for every edge $E$ and its adjacent segment
$E'$.
  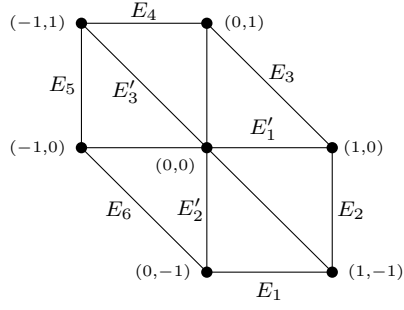
\begin{figure}[!htbp]
    \begin{tikzpicture}[scale=1.65]
      \draw[-] (0, -1) -- (1,-1);
      \draw[-] (1, 0) -- (0,1);
      \draw[-] (0,1) -- (-1,1);
      \draw[-] (-1, 1) -- (-1,0);
      \draw[-] (-1, 1) -- (1,-1);
      \draw[-] (-1, 0) -- (1,0);
      \draw[-] (-1, 0) -- (0,-1);
      \draw[-] (1, -1) -- (1,0);
      \draw[-] (0, -1) -- (0,1);
      \node at (0,0) [circle,fill,inner sep=1.5pt]{};
      \node at (-0.25,-0.15) {$\scriptscriptstyle  (0,0) $};
      \node at (0,-1) [circle,fill,inner sep=1.5pt]{};
      \node at (-0.35,-1) {$\scriptscriptstyle  (0,-1) $};
      \node at (1,-1) [circle,fill,inner sep=1.5pt]{};
      \node at (1.35,-1) {$\scriptscriptstyle  (1,-1) $};
      \node at (1,0) [circle,fill,inner sep=1.5pt]{};
      \node at (1.25,0) {$\scriptscriptstyle  (1,0) $};
      \node at (0,1) [circle,fill,inner sep=1.5pt]{};
      \node at (0.3,1) {$\scriptscriptstyle  (0,1) $};
      \node at (-1,1) [circle,fill,inner sep=1.5pt]{};
      \node at (-1.35,1) {$\scriptscriptstyle  (-1,1) $};
      \node at (-1,0) [circle,fill,inner sep=1.5pt]{};
      \node at (-1.35,0) {$\scriptscriptstyle  (-1,0) $};
      \node at (-0.13,-0.5) {$\scriptstyle  E'_{2} $};
      \node at (0.45,0.15) {$\scriptstyle  E'_{1} $};
      \node at (-0.65,0.45) {$\scriptstyle  E'_{3} $};
            \node at (0.6,0.6) {$\scriptstyle  E_{3} $};
      \node at (0.5,-1.15) {$\scriptstyle  E_{1} $};
      \node at (1.15,-0.5) {$\scriptstyle  E_{2} $};
      \node at (-0.5,1.1) {$\scriptstyle  E_{4} $};
      \node at (-1.15,0.5) {$\scriptstyle  E_{5} $};
      \node at (-0.7,-0.5) {$\scriptstyle  E_{6} $};
    \end{tikzpicture}
    \caption{A reflexive hexagon}
\label{fig:3}
\end{figure}
In spite of this, no Laurent polynomial with unimodular support and
having this Newton polytope can satisfy GEC.

\begin{proposition}
  \label{prop:3}
  Let
  \begin{displaymath}
    p=\alpha_{0}y ^{-1}+\alpha_{1}x y ^{-1}+\alpha_{2}x ^{-1}+\alpha_{3}+\alpha_{4}x +\alpha_{5}x ^{-1}y +\alpha_{6}y 
    \in \mathbb{C}[x^{\pm1}, y^{\pm1}]
  \end{displaymath}
  with $\alpha_{j}\ne 0$ for all $j$ except possibly $j=3$. Then $p$
  does not satisfy GEC.
\end{proposition}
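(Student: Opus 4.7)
The plan is to suppose, for contradiction, that $p$ satisfies GEC, then apply Proposition~\ref{prop:17}(\ref{item:9}) at each vertex of the reflexive hexagon $\NP(p)$ to reduce $p$ --- up to GEC-preserving symmetries --- to an explicit canonical polynomial, and finally compute its Monge-Amp\`ere polynomial directly to exhibit an obstruction.

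I would first apply Proposition~\ref{prop:17}(\ref{item:9}) to the bottom edge $E=\{(0,-1),(1,-1)\}$ at its vertex $v=(0,-1)$, with basis $a=(1,0)$, $b=(-1,1)$. From $p|_{E}=\alpha_{0}y^{-1}+\alpha_{1}xy^{-1}$ one extracts $\xi=\alpha_{0}/\alpha_{1}$, and the forced form $p|_{E'}=c'x^{-1}(x+\alpha_{0}/\alpha_{1})^{2}$ matched against $\alpha_{2}x^{-1}+\alpha_{3}+\alpha_{4}x$ gives $c'=\alpha_{4}$ together with
\begin{displaymath}
  \alpha_{3}=\frac{2\alpha_{0}\alpha_{4}}{\alpha_{1}} \and \alpha_{1}^{2}\alpha_{2}=\alpha_{0}^{2}\alpha_{4}.
\end{displaymath}
The first identity immediately excludes $\alpha_{3}=0$, since $\alpha_{0},\alpha_{1},\alpha_{4}\in\mathbb{C}^{\times}$. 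Analogous relations come from the five remaining vertices. By Lemma~\ref{lem:6} the substitutions $x\mapsto sx$, $y\mapsto ty$ and $p\mapsto cp$ preserve GEC, so I would use these three scalings to normalize $\alpha_{0}=\alpha_{1}=\alpha_{6}=1$; the combined edge constraints then pin down $\alpha_{2}=\alpha_{4}=1$, $\alpha_{3}=2$, $\alpha_{5}=1$, while the alternative branch with $\alpha_{4}=-1$ is equivalent under the further GEC-preserving change $y\mapsto-y$, $p\mapsto-p$. Thus $p$ reduces to the canonical Laurent polynomial
\begin{displaymath}
  p_{0}=y^{-1}+xy^{-1}+x^{-1}+2+x+x^{-1}y+y=x^{-1}y^{-1}(1+x)(1+y)(x+y),
\end{displaymath}
whose factored form is the key ingredient for the next step.

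Using the factorization, $\log p_{0}=-\log x-\log y+\log(1+x)+\log(1+y)+\log(x+y)$, so the $D$-derivatives are $D_{x}^{2}\log p_{0}=x/(1+x)^{2}+xy/(x+y)^{2}$, $D_{y}^{2}\log p_{0}=y/(1+y)^{2}+xy/(x+y)^{2}$, and $D_{x}D_{y}\log p_{0}=-xy/(x+y)^{2}$; the cross terms cancel in the determinant. Using the identity $(x+y)^{2}+x(1+y)^{2}+y(1+x)^{2}=(x+y)(1+x)(1+y)+4xy$ one obtains
\begin{displaymath}
  \delta(p_{0})=\frac{xy\bigl[(x+y)(1+x)(1+y)+4xy\bigr]}{(1+x)^{2}(1+y)^{2}(x+y)^{2}},
\end{displaymath}
and multiplying by $p_{0}^{3}$, while invoking the factorization of $p_{0}$ twice, produces the clean identity $\mu(p_{0})=p_{0}(p_{0}+4)$.

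From this the contradiction is immediate: if $\mu(p_{0})\mid p_{0}^{\kappa}$ for some $\kappa\in\mathbb{Z}_{>0}$, then $p_{0}+4$ would divide $p_{0}^{\kappa-1}$ in the UFD $\mathbb{C}[x^{\pm1},y^{\pm1}]$; but $\gcd(p_{0},p_{0}+4)=\gcd(p_{0},4)=1$, forcing $p_{0}+4$ to be a unit, whereas $\NP(p_{0}+4)=\NP(p_{0})$ is the reflexive hexagon of Figure~\ref{fig:3}, not a single lattice point. The hardest step is the reduction to canonical form in the second paragraph: one must carefully verify that after applying Proposition~\ref{prop:17}(\ref{item:9}) at all six vertices and exploiting the three scaling symmetries, $p$ is really forced into the single polynomial $p_{0}$. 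The explicit computation of $\mu(p_{0})$ via the factored logarithm and the final coprimality argument are then both short and clean.
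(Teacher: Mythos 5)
Your proposal is correct and follows essentially the same route as the paper: both use Proposition~\ref{prop:17}\eqref{item:9} on the edges and adjacent segments of the hexagon to force $p$, up to scalings, into the single normal form $x^{-1}y^{-1}(1+x)(1+y)(x+y)$ (the paper by writing out and solving the $14$ overlap equations in $12$ logarithmic unknowns, you by normalizing three coefficients and propagating the vertex constraints), and then compute the Monge-Amp\`ere polynomial of that normal form to exhibit a factor that cannot divide any power of it. Your closed form $\mu(p_{0})=p_{0}(p_{0}+4)$ agrees with the paper's expanded factorization of $\mu(q)$, and your coprimality argument ($\gcd(p_{0},p_{0}+4)=1$ yet $p_{0}+4$ is not a unit) is a clean equivalent of the paper's observation that the residual quadratic factor is not a product of the linear factors of $q$.
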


\begin{proof}
  Let $E_{i}$, $i=1,\dots, 6$, and $E'_{j}$, $j=1,2,3$, be the edges
  and adjacent segments of the hexagon, and $p_{i}$, $i=1,\dots, 6$,
  and $p'_{j}$, $j=1,2,3$, the corresponding univariate Laurent
  polynomials. Note that $E'_{1}$ is adjacent to both $E_{1}$ and
  $E_{4}$, and a similar remark holds for $E'_{2}$ and $E'_{3}$.  By
  Proposition \ref{prop:17}\eqref{item:9}
\begin{displaymath}
\begin{array}{lll}
p_{1}=c_{1}\,y ^{-1}(x +\xi_{1}) , &
p_{2} =c_{2}\,x y ^{-1}(y +\xi_{2}) , & 
p_{3}=c_{3}\,x (x ^{-1}y +\xi_{3}) ,\\
p_{4}=c_{4}\,x ^{-1}y (x +\xi_{1}) , &
p_{5}=c_{5}\,x ^{-1}(y +\xi_{2}) , & 
p_{6}=c_{6}\, y ^{-1}(x ^{-1}y +\xi_{3}),\\
p'_{1}=c'_{1}\,x ^{-1}(x +\xi_{1})^{2}, & 
p'_{2}=c'_{2}\, y ^{-1}(y +\xi_{2})^{2}, & 
p'_{3}=c'_{3}\,x y ^{-1}(x ^{-1}y +\xi_{3})^{2},
\end{array}
\end{displaymath}
for some nonzero parameters $c_{i}$, $c'_{j}$ and $\xi_{k}$.  Hence
these parts of $p$ are encoded by $12$ parameters, and their overlapping at
each lattice point gives the following system of~$14$~equations:
\begin{displaymath}
  \begin{array}{llll}
(0,-1):  &   c_{1}\xi_{1}=c'_{2}\xi_{2}^{2}=c_{6}\,\xi_{3}, \quad &
(1,-1): &  c_{1}= c_{2}\xi_{2}=c'_{3}\xi_{3}^{2},\\
    (-1,0) :  & c'_{1}\xi_{1}^{2}=c_{5} \xi_{2}=c_{6},\quad &
  (0,0) :  & 2 c_{1}'\xi_{1}=2c'_{2}\xi_{2}=2 c'_{3}\xi_{3}, \\
    (1,0): & c'_{1}=c_{2}=c_{3}\xi_{3}, \quad & 
                                              (-1,1): & c_{4}\xi_{1}=c_{5}=c'_{3}, \\
    (0,1) : & c_{4}=c'_{2}=c_{3}.
  \end{array}
\end{displaymath}
Taking logarithms this transforms into a system of $14$ linear
equations in $12$ variables. Its solution is
\begin{displaymath}
  \begin{array}{lll}
\log(  c_{1})= -r_{1} +2 r_{2} +2 r_{3}, &  \log(c_{2})= -r_{1} + r_{2}+2r_{3}, & \log(c_{3})= r_{3}, \\
\log( c_{4})=  r_{3}, & \log(c_{5})= r_{1}, & \log(c_{6})= r_{1} + r_{2} , \\
\log(  c'_{1})= -r_{1}+r_{2}+2r_{3}, & \log(c'_{2}) = r_{3}, & \log(c'_{3})= r_{1},  \\
\log(\xi_{1}) = r_{1} - r_{3}, &  \log(\xi_{2}) = r_{2} , & \log(\xi_{3}) = -r_{1}+r_{2} + r_{3} ,
  \end{array}
\end{displaymath}
for $r_{1}, r_{2}, r_{3}\in \mathbb{C}/2\pi i \mathbb{Z}$.  Setting $\rho_{i}=e^{r_{i}}$
this implies 
\begin{align*}
  p&=p_{1}+p'_{1}+p_{4} \\
&= \rho_{1}\rho_{3}^2y^{-1} +
\rho_{1}\rho_{2}\rho_{3}xy^{-1} +
\rho_{1}\rho_{2}^{-1}\rho_{3}^2 x^{-1} +
2 \rho_{1}\rho_{3} +
\rho_{1}\rho_{2}x +  \rho_{1}\rho_{2}^{-1}\rho_{3}x^{-1}y +   \rho_{1}y .
\end{align*}
Setting furthermore $x=\rho_{2}^{-1}\rho_{3}u$, $y=\rho_{3}v$ and
dividing by $\rho_{1}\rho_{3}$, this Laurent polynomial reduces to
\begin{displaymath}
  q=v^{-1} +uv^{-1} + u^{-1} + 2  + u +  u^{-1}v  +v  =u^{-1}v^{-1}(u + v)(u + 1)(v + 1) \in \mathbb{C}[u^{\pm1},v^{\pm1}].
\end{displaymath}
The corresponding Monge-Ampère polynomial is
\begin{align*}
  \mu(q)=&
          u^2 + 2 u v + v^2 + 10 u + 2 u^2v^{-1} + 10 v + 2 u^{-1}v^2 + u^2v^{-2} + 10 u v^{-1} + 10 u^{-1}v \\
  & + u^{-2}v^2 
  + 10 u^{-1} + 2 u v^{-2}
  + 10 v^{-1} + 2 u^{-2} v + u^{-2} + v^{-2} + 2 u^{-1} v^{-1} + 18 \\
  = &u^{-2} v^{-2} (u^2 v + u v^2 + u^2 + 6 u v + v^2 + u + v) (u + v) (u + 1) (v + 1),
\end{align*}
Its first nontrivial factor is quadratic and so if it divides a power
of $p$ then it is necessarily equal to $(u+v)^{2}$, $(u+v)(u+1)$,
$(u+v)(v+1)$ or $(u+1)(v+1)$, which is is clearly not the case. Hence
$p$ does not satisfy GEC, as stated.
\end{proof}

This lattice polygon is the anticanonical polytope of the blow up of
$\mathbb{P}^{2}$ at its three fixed points \cite[Exercise
8.3.8(a)]{CoxLittleSchenck:tv}.

\begin{corollary}
  \label{cor:3}
  Let $X$ be the blowup of $\mathbb{P}^{2}$ at its three fixed points.
  Then there  is no $p\in \mathbb{C}[x^{\pm1},y^{\pm1}]$ with $\supp(p)$
  unimodular and $\NP(p)=\Delta_{-K_{X}}$ that 
  satisfies GEC. In particular $X$ does not admit a projectively
  induced K\"ahler-Einstein~form.
\end{corollary}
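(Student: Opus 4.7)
The plan is to reduce the statement of the corollary to a direct application of Proposition \ref{prop:3}. The first step is to identify the combinatorial data of $X = \mathrm{Bl}_3(\mathbb{P}^2)$: its anticanonical polytope $\Delta_{-K_X}$ is precisely the reflexive hexagon displayed in Figure~\ref{fig:3}, with vertices $(0,-1), (1,-1), (1,0), (0,1), (-1,1), (-1,0)$ and unique interior lattice point $(0,0)$ (this is the toric variety computed in \cite[Exercise 8.3.8(a)]{CoxLittleSchenck:tv} already cited in the excerpt).

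Next, suppose $p\in \mathbb{C}[x^{\pm1},y^{\pm1}]$ has $\supp(p)$ unimodular and $\NP(p)=\Delta_{-K_X}$. The key observation is that the lattice points of this hexagon are exactly the six vertices together with the origin. I would argue that unimodularity forces all six vertices to lie in $\supp(p)$: the vertices are vertices of $\conv(\supp(p))=\NP(p)$ by definition, and moreover each edge of the hexagon has lattice length one, so the only lattice point on an edge besides one endpoint is the other endpoint. Since Definition~\ref{def:11} requires, at each vertex $v\in \NP(p)^0$, a lattice point $w_E\in E\cap \supp(p)$ on every adjacent edge $E$ with $w_E-v$ a basis vector, the opposite vertex on each edge must be in $\supp(p)$. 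Hence $\supp(p)$ contains the six vertices and is contained in the seven-point set $\{(0,-1),(1,-1),(-1,0),(0,0),(1,0),(-1,1),(0,1)\}$. Writing $p$ in the monomial basis of these seven exponents, $p$ has exactly the shape considered in Proposition~\ref{prop:3}, with all coefficients nonzero except possibly the one at $(0,0)$. By Proposition~\ref{prop:3} this $p$ cannot satisfy GEC, yielding the first assertion.

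For the \emph{in particular} clause, I would argue by contradiction: if $X$ admitted a projectively induced K\"ahler-Einstein form, then by Proposition~\ref{prop:1} there exists $p\in \mathbb{R}[M]^+$ with $\supp(p)$ unimodular, $\NP(p)=\Delta_{-K_X}$, and $\mu(p)=p^n$. In particular $\mu(p)$ divides $p^n$, so $p$ satisfies GEC with $\kappa = n$, contradicting what we just proved. Since $\mathbb{R}[M]^+\subset \mathbb{C}[x^{\pm1},y^{\pm1}]$, the first part directly applies.

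There is no real obstacle here beyond being careful about the unimodularity deduction; the substantive work has already been carried out in Proposition~\ref{prop:3}. The slight subtlety worth double-checking is that unimodularity does indeed pin down $\supp(p)$ to the explicit seven-point set, which in this reflexive hexagon is automatic because each edge is primitive and every pair of adjacent edge-directions at a vertex forms a $\mathbb{Z}$-basis of $\mathbb{Z}^2$.
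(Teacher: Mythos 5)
Your proposal is correct and is exactly the argument the paper intends (it leaves the proof implicit): the vertices of $\NP(p)$ automatically lie in $\supp(p)$, so $p$ has precisely the shape of Proposition~\ref{prop:3}, and the \emph{in particular} clause follows from Proposition~\ref{prop:1} since $\mu(p)=p^{n}$ implies GEC. Your extra remarks on unimodularity are harmless but not needed, since unimodularity is a hypothesis rather than something to be verified.
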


\subsection{Large dimensions}
\label{sec:large-dimensions} 

Finally  we apply our results to the four families of symmetric
toric Fano manifolds discussed by Batyrev and
Selivanova in~\cite{BS:ekmstfm} and the non-symmetric examples of Nill
and Paffenholz \cite{NP:eketfmansrp}.

\begin{definition}
  \label{def:6}
  Let $X$ be a toric manifold with torus $\mathbb{T}$ and denote by
  $\Sigma$ its fan on the vector space $N_{\mathbb{R}}$. Then $X$ is
  \emph{symmetric} if the the group of automorphisms of~$\Sigma$ fixes
  only the origin. This toric manifold is \emph{centrally symmetric}
  if the map $N_{\mathbb{R}}\to N_{\mathbb{R}}$ defined as
  $u\mapsto -u$ is an automorphism of $\Sigma$.
\end{definition}

Following~\cite[Section~4]{BS:ekmstfm}, for each integer $k\ge 1$ we
denote by $V_{k}$ the $k$-th del Pezzo toric manifold introduced by
Voskresenskij and Klyachko \cite{VK:tfvrs}, see \cite[Example
4.1]{BS:ekmstfm}. It is the centrally symmetric Fano toric manifold of
dimension $n=2k$ with a fan whose cones are generated by the vectors
  \begin{equation}
    \label{eq:10}
    \pm e_{1}, \dots, \pm e_{n}, \pm (e_{1}+\dots, e_{n}), 
  \end{equation}
  where $e_{1},\dots,e_{n}$ is the standard basis of
  $N=\mathbb{Z}^{n}$.  Note that $V_{1}$ is the blowup of
  $\mathbb{P}^{2}$ at its three fixed points in Corollary \ref{cor:3}.

  Next for integers $1\le k\le m$ we denote by $S_{m,k}$ the symmetric
  toric Fano manifold of dimension $n=2m+1$ introduced by Sakane
  \cite{Sakane}, see \cite[Example 4.2]{BS:ekmstfm}.  It is the
  projectivization of the vector bundle
  $\mathcal{O}\oplus \mathcal{O}(k,-k)$ over
  $\mathbb{P}^{m}\times \mathbb{P}^{m}$, and its fan is made of cones
  generated by the vectors
  \begin{multline}
    \label{eq:17}
    e_{1}, \dots, e_{2m}, \pm e_{2m+1},  -(e_{1}+e_{2}+\dots+ e_{m}+ke_{2m+1}), \\ -(e_{m+1}+e_{m+2}+\dots+e_{2m}-ke_{2m+1}).
  \end{multline}

  Then for integers $0\le k\le m$ we denote by $X_{m,k}$ the symmetric
  toric Fano manifold of dimension $n=2m+2$ introduced by Nakagawa
  \cite{Nakagawa}, see \cite[Example~4.3]{BS:ekmstfm}.  It is defined
  by a fan whose cones are generated by the vectors
  \begin{multline}
    \label{eq:29}
    e_{1}, \dots, e_{2m}, \pm e_{2m+1}, \pm e_{2m+2}, \pm (e_{2m+1}+e_{2m+2})    \\
    -(e_{1}+e_{2}+\dots+ e_{m}-ke_{2m+1}), -(e_{m+1}+e_{m+2}+\dots+e_{2m}+ke_{2m+1}).
  \end{multline}

  Finally for an integer $m\ge 1$ we denote by $W_{m}$ the symmetric
  toric Fano manifold of dimension $n=2m$ introduced in \cite[Example
  4.4]{BS:ekmstfm}. It is the blowup of
  $\mathbb{P}^{m}\times \mathbb{P}^{m}$ along certain $m+1$ invariant
  subvarieties of codimension $2$, and it is defined by a fan whose
  cones are generated by the vectors
  \begin{multline}
    \label{eq:32}
    e_{1}, \dots, e_{2m}, e_{1}+e_{m+1}, \dots, e_{m}+e_{2m}, \\
    -(e_{1}+\dots+ e_{m}), -(e_{m+1}+\dots+e_{2m}),
    -(e_{1}+\dots+e_{2m}).
  \end{multline}
   By \cite[Theorem 1.1]{BS:ekmstfm}, all these Fano toric manifolds
  admit~a~K\"ahler-Einstein~form. 

  The next result contains  Theorem \ref{thm:1} from the introduction.
  
  \begin{theorem}
    \label{thm:3}
    Let $X$ be a symmetric toric Fano manifold of type $V_{k}$,
    $S_{m,k}$, $X_{m,k}$ or~$W_{m}$. Then for every
    $p\in \mathbb{C}[x_{1}^{\pm1}, \dots, x_{n}^{\pm1}]$ with
    $\supp(p)$ unimodular and $\NP(p)=\Delta_{-K_{X}}$ we have that
    $p$ does not satisfy GEC.  In particular $X$ does not admit a
    projectively induced K\"ahler-Einstein~form.
  \end{theorem}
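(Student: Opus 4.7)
The plan is to combine the hereditary property of GEC (Proposition \ref{prop:5}) with the two-dimensional obstructions established in Corollaries \ref{cor:2} and \ref{cor:3}. Assume for contradiction that $p$ has unimodular support, Newton polytope $\Delta_{-K_X}$, and satisfies GEC. For each of the four families I will exhibit a codimension-two cone $\sigma$ in the fan of $X$ whose corresponding two-dimensional face $F_\sigma\preceq \Delta_{-K_X}$, after a lattice translation, is a reflexive polygon to which one of the 2D obstructions applies. By Proposition \ref{prop:5} the translated initial part $p_\sigma \in \mathbb{C}[M\cap\sigma^{\bot}]$ has unimodular support and Newton polytope $F_\sigma - v$ and satisfies GEC, so Proposition \ref{prop:17}\eqref{item:10} or Proposition \ref{prop:3} applied to $p_\sigma$ yields the desired contradiction.

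The second step is to identify such $\sigma$ explicitly for each family from the fan data \eqref{eq:10}--\eqref{eq:32}. For $V_1$ we are done by Corollary \ref{cor:3}. For $V_k$ with $k\ge 2$, write $n=2k$; the polytope is $\Delta_{-K_{V_k}}=\{x\in\mathbb{R}^n: |x_i|\le 1,\ |x_1+\dots+x_n|\le 1\}$, and fixing $x_3=\dots=x_{k+1}=1$, $x_{k+2}=\dots=x_{2k}=-1$ produces a 2D face of $\Delta_{-K_{V_k}}$ isomorphic to the reflexive hexagon of Figure \ref{fig:3}, since $\sum_{i\ge 3}x_i=0$ leaves the constraints $|x_1|,|x_2|,|x_1+x_2|\le 1$ in the remaining coordinates. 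Proposition \ref{prop:3} then applies to $p_\sigma$ and produces a contradiction. Analogous choices for $S_{m,k}$, $X_{m,k}$ and $W_m$ produce either the reflexive hexagon (yielding a contradiction via Proposition \ref{prop:3}) or, in the cases where the parameter $k$ introduces asymmetry among the fan rays, a reflexive polygon with two edges $E,F$ satisfying $\ell(E')/\ell(E)\ne \ell(F')/\ell(F)$ (yielding a contradiction via Proposition \ref{prop:17}\eqref{item:10}).

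The final assertion about non-existence of a projectively induced K\"ahler-Einstein form is immediate: by Proposition \ref{prop:1}, such a form would furnish a Laurent polynomial $p\in \mathbb{R}[M]^+$ with unimodular support, $\NP(p)=\Delta_{-K_X}$ and $\mu(p)=p^n$. The identity $\mu(p)=p^n$ is a particular instance of GEC (with $\kappa=n$), which has just been ruled out.

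The principal obstacle is the systematic combinatorial verification for the three families $S_{m,k}$, $X_{m,k}$ and $W_m$ across all admissible parameters $m$ and $k$: for each value of the parameters one must locate a suitable codimension-two cone whose associated 2D face lies within the reach of Proposition \ref{prop:17} or \ref{prop:3}, and verify that such a collection of rays genuinely spans a cone of the fan rather than being merely a linearly independent subset of the ray set. The fan descriptions \eqref{eq:17}--\eqref{eq:32} are concrete enough to make this a finite check in each family, but care is needed to separate the hexagonal case from the asymmetric-ratio case and to cover the extreme values of the parameters.
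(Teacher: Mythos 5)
Your strategy is exactly the paper's: restrict to a two-dimensional face of $\Delta_{-K_X}$ via the hereditary property of GEC (Proposition \ref{prop:5}) and invoke the planar obstructions of Proposition \ref{prop:17}\eqref{item:10} and Proposition \ref{prop:3}. Your treatment of $V_k$ is complete and matches the paper's (the paper fixes $x_i=(-1)^i$ for $i=3,\dots,n$ rather than your sign pattern; either choice makes the fixed coordinates sum to zero and leaves the reflexive hexagon as the face). The reduction of the ``in particular'' clause to GEC via Proposition \ref{prop:1} is also correct.

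The gap is that for $S_{m,k}$, $X_{m,k}$ and $W_m$ you only assert that ``analogous choices'' exist, and you yourself flag this verification as the principal obstacle; but that verification \emph{is} the content of the theorem for those three families, so the proof is incomplete as written. For the record, the faces the paper uses are: for $S_{m,k}$, set $x_1=\dots=x_m=y_1=\dots=y_{m-1}=-1$ to obtain the trapezoid $\{(y_m,z): -1\le y_m\le m+kz,\ -1\le z\le 1\}$, whose edge/adjacent-segment length ratios at the vertex $(-1,1)$ are $(m+k+1)/(m+1)$ and $1$, contradicting Proposition \ref{prop:17}\eqref{item:10}; for $X_{m,k}$, set $x_1=\dots=x_m=y_1=\dots=y_m=-1$ to recover the reflexive hexagon in the $(z,w)$-plane and apply Proposition \ref{prop:3}; for $W_m$, set $x_1=\dots=x_{m-1}=-1$ and $x_1+y_1=\dots=x_{m-1}+y_{m-1}=-1$ to obtain a hexagon which for $m=1$ is the reflexive one and for $m\ge 2$ has ratios $(m+1)/m\ne 2$ at $(-1,1)$. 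Two further remarks: these faces are not all reflexive polygons (the $S_{m,k}$ trapezoid is not), which is harmless since Proposition \ref{prop:17} does not require reflexivity, so you should drop that adjective; and your worry about whether the chosen rays span a cone of the fan is moot if one argues, as the paper does, directly on the polytope, since imposing equality in a subset of the defining inequalities of $\Delta_{-K_X}$ always produces a face.
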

  
  \begin{proof}
    Let $k\ge 1$ be an integer and denote by $\Delta_{k}$ the
    anticanonical polytope of the del Pezzo $V_{k}$.  By the
    description of this polytope in \eqref{eq:4} and of the fan of
    this toric manifold in \eqref{eq:10} we have
  \begin{displaymath}
    \Delta_{k}=\{ x\in \mathbb{R}^{n} \mid -1\le x_{1}, \dots, x_{n}, x_{1}+\dots+x_{n} \le 1\}.
  \end{displaymath}
  The subset
  \begin{displaymath}
    F=\{x\in \Delta_{k} \mid x_{i}=(-1)^{i} \text{ for } i=3,\dots, n\}
  \end{displaymath}
  is a face of $\Delta_{k}$ because it is given by the equality in
  some of the inequalities defining this polytope.  This face is an
  hexagon in the $(1,2)$-plane of $\mathbb{R}^{n}$ identifying with
  that in Proposition \ref{prop:3}, namely
  \begin{displaymath}
    F\simeq    \{(x_{1},x_{2})\in \mathbb{R}^{2}\mid -1\le x_{1},x_{2}, x_{1}+x_{2}\le 1\}.
  \end{displaymath}
  By Proposition \ref{prop:5}, if the Laurent polynomial $p $
  satisfies GEC then this is also the case for its restriction
  $p|_{F}$, which is excluded by Proposition \ref{prop:3}.
  
  Now for integers $1\le k\le m$ we denote by $\Delta_{m,k}$ the
  anticanonical polytope of the Sakane toric manifold $S_{m,k}$. By
  \eqref{eq:17} we have
  \begin{multline*}
    \Delta_{m,k}=\{(x_{1},\dots, x_{m}, y_{1},\dots, y_{m},z)\in \mathbb{R}^{n} \mid
    x_{i}, y_{j}\ge -1 \text{ for } i,j=1,\dots, m, \\ -1\le z\le 1, \ x_{1}+x_{2}+\dots+x_{m}\le 1-kz, \ y_{1}+y_{2}+\dots+y_{m}\le 1+kz\}
  \end{multline*}
  The subset of this polytope defined by the equations
  \begin{displaymath}
  x_{1}=\dots= x_{m}=y_{1}=\dots=y_{m-1}=-1  
  \end{displaymath}
  is a  face $F$ that identifies with a trapezoid in
  the $(2m,2m+1)$-plane of $\mathbb{R}^{n}$, namely
  \begin{displaymath}
    F\simeq\{(y_{m},z)   \in \mathbb{R}^{2} \mid    -1\le y_{m}\le m+kz, -1\le z\le 1\}.
  \end{displaymath}
  As in the previous case, if $p $ satisfies GEC then this is also
  the case for $p|_{F}$. Now the ratios of the lattice length of the
  two edges of this trapezoid meeting at the lattice point $(-1 ,1)$
  and its respective adjacent segments are $(m+k+1)/(m+1)$ and $1$,
  and so by Proposition \ref{prop:17}\eqref{item:10} the condition
  GEC for $p|_{F}$ is not possible.

  Next for integers $0\le k\le m$ we denote by $\Delta_{m,k}$ the
  anticanonical polytope of the Nakagawa toric manifold $X_{m,k}$. By
  \eqref{eq:29} we have
  \begin{multline*}
    \Delta_{m,k}=\{(x_{1},\dots, x_{m}, y_{1},\dots, y_{m},z,w)\in \mathbb{R}^{n}  \\ \mid
    x_{i}, y_{j}\ge -1 \text{ for } i,j=1,\dots, m,  \ -1\le z,w, z+w\le 1, \\
    x_{1}+x_{2}+\dots+x_{m}\le 1+kz, \ y_{1}+y_{2}+\dots+y_{m}\le 1-kz\}.
  \end{multline*}
  The subset defined by the equations
  \begin{displaymath}
    x_{1}=x_{2}=\dots, x_{m}=y_{1}=y_{2}=\dots=y_{m}=-1
  \end{displaymath}
  is a face $F$ that identifies with an hexagon in the
  $(2m+1,2m+2)$-plane of $\mathbb{R}^{n}$:
  \begin{displaymath}
F\simeq\{(z,w)  \in \mathbb{R}^{2}\mid     -1\le z,w, z+w\le 1\}.
  \end{displaymath}
  It is the same that appears for the del Pezzo $V_{k}$'s, and as
  therein we deduce that $p$ cannot satisfy GEC.

  Finally for an integer $m\ge 1$ we let $\Delta_{m}$ be the
  anticanonical polytope of the Batyrev-Selivanova toric manifold
  $W_{m}$. By \eqref{eq:32} we have
  \begin{multline*}
    \Delta_{m,k}=\{(x_{1},\dots, x_{m}, y_{1},\dots, y_{m})\in \mathbb{R}^{n} \mid
    x_{i}, y_{i}, x_{i}+y_{i}\ge -1 \text{ for } i=1,\dots, m, \\  x_{1}+\dots+x_{m},  \ y_{1}+\dots+y_{m}, \ x_{1}+\dots+x_{m}+  y_{1}+\dots+y_{m}\le 1\}.
  \end{multline*}
  The face  of this polytope defined by the equations
  \begin{displaymath}
    x_{1}=\dots =x_{m-1}=-1, \quad x_{1}+y_{1}=\dots=x_{m-1}+y_{m-1}=-1
  \end{displaymath}
  is an hexagon in the $(m,2m)$-plane:
  \begin{displaymath}
F\simeq\{(x_{m},y_{m})   \in \mathbb{R}^{2} \mid    x_{m}, y_{m}, x_{m}+y_{m}\ge -1, \ x_{m}, x_{m}+y_{m}\le m, \ y_{m}\le 1\},
\end{displaymath}
When $m=1$ this is the hexagon considered in  previous cases.
When $m\ge 2$ the ratios of the lattice length of the two edges
meeting at the lattice point $(-1,1)$ and their adjacent segments are
\begin{displaymath}
  \frac{m+1}{m} \ne  2.
\end{displaymath}
Applying again Proposition \ref{prop:17}\eqref{item:10} we get that
$p$ can neither satisfy GEC in this~case.

The last statement is a direct consequence of Corollary \ref{cor:9}.
\end{proof}

\begin{proof}[Proof of Corollary \ref{cor:4}]
  Let $X$ be a centrally symmetric compact toric manifold.
  By \cite[Theorem 6]{VK:tfvrs} it isomorphic to a product of
  projective lines and del Pezzo toric manifolds.

  If $X$ is a product of projective lines then its Segre embedding
  gives a projectively induced K\"ahler-Einstein form on it.  On the
  other hand, if $X$ has a del Pezzo factor~$V_{k}$ then the
  anticanonical polytope of $V_{k}$ is a face of the anticanonical
  polytope of $X$.  Theorem \ref{thm:3} and Corollary \ref{cor:9} then
  imply that $X$ does not admit a projectively induced K\"ahler-Einstein
  form.
\end{proof}

In \cite{NP:eketfmansrp} Nill and Paffenholz presented two
non-symmetric Fano toric manifolds in dimensions $7$ and $8$ admitting
a K\"ahler-Einstein form.  The first is the toric manifold~$X_{1}$
associated to a fan whose cones are generated by the vectors
\begin{equation}
  \label{eq:33}
  e_{1},e_{2}, e_{3},e_{4},e_{5},e_{6}, \pm e_{7}, -e_{1}-e_{7},-e_{2}-e_{7},-e_{3}-e_{7}, -e_{4}-e_{5}-e_{6}+2e_{7} \in \mathbb{R}^{7},
\end{equation}
whereas the second is the toric manifold $X_{2}$ associated to a fan whose
cones are generated by the vectors
\begin{multline}
  \label{eq:34}
  e_{1},e_{2}, e_{3},e_{4},e_{5},e_{6}, \pm e_{7}, \pm e_{8},  \pm (e_{7}-e_{8}),  \\
  -e_{1}-e_{8}, -e_{2}-e_{8},-e_{3}-e_{8},
  -e_{4}-e_{5}-e_{6}+2e_{8}\in \mathbb{R}^{8} .
\end{multline}

\begin{theorem}
  \label{thm:5}
  The Fano toric manifolds $X_{1}$ and $X_{2}$ do not admit a
  projectively induced K\"ahler-Einstein form.
\end{theorem}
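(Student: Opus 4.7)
The plan is to apply the same strategy as in the proof of Theorem \ref{thm:3}: for each $X_i$ I would identify a two-dimensional face of the anticanonical polytope that, up to a unimodular affine change of coordinates, agrees with one of the two polygons already excluded in Section \ref{sec:small-dimensions}—the trapezoid of Example \ref{exm:10} or the reflexive hexagon of Proposition \ref{prop:3}—and then combine Proposition \ref{prop:5} and Corollary \ref{cor:9} with the results on these polygons to produce the obstruction.

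For $X_2$ I would freeze the first six coordinates by substituting $x_1=x_2=\cdots=x_6=-1$ into the defining inequalities of $\Delta_{-K_{X_2}}$ given by \eqref{eq:34}. The constraints on $(x_7,x_8)$ coming from the rays $\pm e_7$, $\pm e_8$, $\pm(e_7-e_8)$ reduce exactly to $-1\le x_7,x_8,x_7-x_8\le 1$, while the remaining constraints $x_i+x_8\le 1$ ($i=1,2,3$) and $x_4+x_5+x_6-2x_8\le 1$ become automatic on $-1\le x_8\le 1$. The resulting face is a hexagon in the $(x_7,x_8)$-plane, and the unimodular substitution $(x_7,x_8)\mapsto(x_7,-x_8)$ identifies it with the reflexive hexagon of Proposition \ref{prop:3}.

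For $X_1$, which lacks the triad $\pm e_7,\pm e_8,\pm(e_7-e_8)$ of $X_2$, I would instead freeze $x_2=x_3=x_4=x_5=x_6=-1$ in \eqref{eq:33}. A direct inspection shows that the surviving inequalities in $(x_1,x_7)$ are $x_1\ge -1$, $-1\le x_7\le 1$ and $x_1+x_7\le 1$, while the rays $-e_2-e_7$, $-e_3-e_7$ and $-e_4-e_5-e_6+2e_7$ yield constraints that are automatically satisfied for $x_7\in[-1,1]$. The resulting face is the trapezoid with vertices $(-1,-1)$, $(2,-1)$, $(0,1)$, $(-1,1)$, which is exactly the polygon treated in Example \ref{exm:10}.

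Once these identifications are in place, the conclusion is automatic. If $X_i$ admitted a projectively induced K\"ahler-Einstein form, Proposition \ref{prop:1} would provide $p\in\mathbb{R}[M]^{+}$ with $\supp(p)$ unimodular, $\NP(p)=\Delta_{-K_{X_i}}$ and satisfying GEC. Applying Proposition \ref{prop:5} to the cone of $\Sigma_{X_i}$ dual to the two-dimensional face above would yield a Laurent polynomial in two variables, with unimodular support and Newton polytope the identified trapezoid (for $X_1$) or hexagon (for $X_2$), still satisfying GEC—contradicting Example \ref{exm:10} (via Proposition \ref{prop:17}\eqref{item:10}) in the first case and Proposition \ref{prop:3} in the second. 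The only potentially delicate step is the bookkeeping needed to confirm that the chosen equalities $x_i=-1$ really do define a face and that all the non-selected defining inequalities become redundant on that face; this is a direct check from the explicit ray lists \eqref{eq:33} and \eqref{eq:34}, with no essential calculation beyond what I have sketched.
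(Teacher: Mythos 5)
Your proposal is correct and follows essentially the same route as the paper: the paper also cuts out the trapezoid face $x_2=\cdots=x_6=-1$ of $\Delta_{-K_{X_1}}$ (the trapezoid of Example \ref{exm:10}) and the hexagon face $x_1=\cdots=x_6=-1$ of $\Delta_{-K_{X_2}}$ (unimodularly equivalent to the hexagon of Proposition \ref{prop:3}), and then invokes Propositions \ref{prop:1}, \ref{prop:5} and \ref{prop:17} exactly as you do. Your only addition is making explicit the unimodular change $(x_7,x_8)\mapsto(x_7,-x_8)$ identifying the two hexagons, which the paper leaves implicit.
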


\begin{proof}
  Denote by $\Delta_{i}$ the anticanonical polytope of $X_{i}$,
  $i=1,2$. By \eqref{eq:33} we have
  \begin{multline*}
    \Delta_{1}=\{(x_{1},\dots, x_{7})\in \mathbb{R}^{7}\mid -1\le x_{1},\dots, x_{6} , \ -1\le x_{7}\le 1, \\
    x_{1}+x_{7}, x_{2}+x_{7}, x_{3}+x_{7}, x_{4}+x_{5}+x_{6}-2x_{7}\le 1\}.
  \end{multline*}
  The subset of this polytope defined by the equations
  $x_{2}=\cdots= x_{6}=-1$ is a face in the $(1,7)$-plane
  identifying with a trapezoid:
  \begin{displaymath}
    F\simeq \{(x_{1},x_{7})\in \mathbb{R}^{2}\mid -1 \le x_{1},\ -1 \le x_{7} \le 1,  \  x_{1}+x_{7}\le 1\}.
  \end{displaymath}
  Let $p\in \mathbb{C}[\mathbb{Z}^{7}]$ with $\supp(p)$ unimodular. By
  Proposition \ref{prop:17} we have that $p|_{F}$ does not satisfy
  GEC, and so $p$ does not satisfy GEC by the hereditary character of
  this condition (Proposition~\ref{prop:5}). Hence by Proposition
  \ref{prop:1} we have that $X_1$ does does not admit a projectively
  induced K\"ahler-Einstein form.

  Similarly by \eqref{eq:34} we have
  \begin{multline*}
    \Delta_{2}=\{(x_{1},\dots, x_{8})\in \mathbb{R}^{8}\mid -1 \le x_{1},\dots, x_{6}, \
    -1\le x_{7},x_{8}, x_{7}-x_{8}\le 1, \\ x_{1}+x_{8}, x_{2}+x_{8}, x_{3}+x_{8}, x_{4}+x_{5}+x_{6}-2x_{8}\le 1 \}.
  \end{multline*}
  The subset  defined by the equations
  $x_{1}=\cdots=x_{6}=-1$ is a face in the $(7,8)$-plane identifying
  with an hexagon:
  \begin{displaymath}
    F\simeq \{(x_{7},x_{8})\in \mathbb{R}^{2}\mid -1 \le x_{7},x_{8}, x_{7}-x_{8}\le 1\}.
  \end{displaymath}
As before, we deduce that $X_{2}$ does not admit a projectively
  induced K\"ahler-Einstein form.
\end{proof}

\begin{remark}
  \label{rem:1}
  As is clear from its proof, it is possible to state a GEC version of
  this result similar to that in Theorem \ref{thm:3}.
\end{remark}

These examples were generalized by Nakagawa \cite{Nakagawa:enp}, and
it would be interesting to see if the previous analysis extends to
this series of non-symmetric Fano manifolds.

\subsection*{Acknowledgments}
Part of this work was done during a research stay at Centre
International de Rencontres Math\'ematiques (CIRM) and visits to
Politecnico di Torino and Universitat de Barcelona. We thank these
institutions for their hospitality.

Antonio Di Scala is a member of the research group in cryptography and
number theory (CrypTO) at Politecnico di Torino and GNSAGA at INdAM.
Mart\'{\i}n Sombra was partially supported by the MICINN research
project PID2019-104047GB-I00 and the AEI project CEX2020-001084-M of
the Mar\'{\i}a de Maeztu program for centers and units of excellence
in R\&D.

\providecommand{\bysame}{\leavevmode\hbox to3em{\hrulefill}\thinspace}
\providecommand{\MR}{\relax\ifhmode\unskip\space\fi MR }
\providecommand{\MRhref}[2]{%
  \href{http://www.ams.org/mathscinet-getitem?mr=#1}{#2}
}
\providecommand{\href}[2]{#2}


\begin{thebibliography}{CDS15}

\bibitem[ALZ12]{ArezzoLoiZuddas_hbm}
C.~Arezzo, A.~Loi, and F.~Zuddas, \emph{On homothetic balanced metrics}, Ann.
  Global Anal. Geom. \textbf{41} (2012), 473--491.

\bibitem[BB13]{BB:rmaekrstlfv}
R.~J. Berman and B.~Berndtsson, \emph{Real {M}onge-{A}mp\`ere equations and
  {K}\"ahler-{R}icci solitons on toric log {F}ano varieties}, Ann. Fac. Sci.
  Toulouse Math. (6) \textbf{22} (2013), 649--711.

\bibitem[BS99]{BS:ekmstfm}
V.~V. Batyrev and E.~N. Selivanova, \emph{Einstein-{K}\"ahler metrics on
  symmetric toric {F}ano manifolds}, J. Reine Angew. Math. \textbf{512} (1999),
  225--236.

\bibitem[Cal53]{Calabi}
E.~Calabi, \emph{Isometric imbedding of complex manifolds}, Ann. of Math. (2)
  \textbf{58} (1953), 1--23.

\bibitem[Cal85]{Calabi:ekm}
\bysame, \emph{Extremal {K}\"ahler metrics. {II}}, Differential geometry and
  complex analysis, Springer, 1985, pp.~95--114.

\bibitem[CDS15]{CDS:kemfm}
X.~Chen, S.~Donaldson, and S.~Sun, \emph{K\"ahler-{E}instein metrics on {F}ano
  manifolds. {I}: {A}pproximation of metrics with cone singularities, {II}:
  {L}imits with cone angle less than {$2\pi$}, {III}: {L}imits as cone angle
  approaches {$2\pi$} and completion of the main proof}, J. Amer. Math. Soc.
  \textbf{28} (2015), 183--197, 199--234, 235--278.

\bibitem[Che67]{Chern:ehkmchc}
S.-S. Chern, \emph{Einstein hypersurfaces in a {K}\"ahlerian manifold of
  constant holomorphic curvature}, J. Differential Geometry \textbf{1} (1967),
  21--31.

\bibitem[CLS11]{CoxLittleSchenck:tv}
D.~A. Cox, J.~B. Little, and H.~K. Schenck, \emph{Toric varieties}, Grad. Stud.
  Math., vol. 124, Amer. Math. Soc., 2011.

\bibitem[Don08]{Donaldson:kgtmsomlsg}
S.~K. Donaldson, \emph{K\"ahler geometry on toric manifolds, and some other
  manifolds with large symmetry}, Handbook of geometric analysis. {N}o. 1, Adv.
  Lect. Math. (ALM), vol.~7, Int. Press, 2008, pp.~29--75.

\bibitem[Ful93]{Fulton_toric}
W.~Fulton, \emph{Introduction to toric varieties}, Ann. of Math. Stud., vol.
  131, Princeton Univ. Press, 1993.

\bibitem[Han75]{Hano:ecicps}
J.-I. Hano, \emph{Einstein complete intersections in complex projective space},
  Math. Ann. \textbf{216} (1975), 197--208.

\bibitem[Hul00]{Hulin:kempe}
D.~Hulin, \emph{K\"{a}hler-{E}instein metrics and projective embeddings}, J.
  Geom. Anal. \textbf{10} (2000), 525--528.

\bibitem[LZ18]{LZ:kikmcsf}
A.~Loi and M.~Zedda, \emph{K\"ahler immersions of {K}\"ahler manifolds into
  complex space forms}, Lect. Notes Unione Mat. Ital., vol.~23, Springer and
  UMI, 2018.

\bibitem[Mor07]{Moroianu:lkg}
A.~Moroianu, \emph{Lectures on {K}\"{a}hler geometry}, London Math. Soc. Stud.
  Texts, vol.~69, Cambridge Univ. Press, 2007.

\bibitem[MS24]{MS:TKEmicps}
G.~Manno and F.~Salis, \emph{$\mathbb{T}^n$-invariant {K}\"ahler-{E}instein
  manifolds immersed in complex projective spaces}, e-print
  {arXiv:2407.12685v1}, 2024.

\bibitem[Nak94]{Nakagawa}
Y.~Nakagawa, \emph{Classification of {E}instein-{K}\"ahler toric {F}ano
  fourfolds}, Tohoku Math. J. (2) \textbf{46} (1994), 125--133.

\bibitem[Nak15]{Nakagawa:enp}
\bysame, \emph{On the examples of {N}ill and {P}affenholz}, Internat. J. Math.
  \textbf{26} (2015), 1540007, 15.

\bibitem[NP11]{NP:eketfmansrp}
B.~Nill and A.~Paffenholz, \emph{Examples of {K}\"ahler-{E}instein toric {F}ano
  manifolds associated to non-symmetric reflexive polytopes}, Beitr. Algebra
  Geom. \textbf{52} (2011), 297--304.

\bibitem[Obr07]{Obro07}
M.~Obro, \emph{An algorithm for the classification of smooth {F}ano polytopes},
  e-print {arXiv:0704.0049}, 2007.

\bibitem[Sak86]{Sakane}
Y.~Sakane, \emph{Examples of compact {E}instein {K}\"ahler manifolds with
  positive {R}icci tensor}, Osaka J. Math. \textbf{23} (1986), 585--616.

\bibitem[Smy67]{S:dgch}
B.~Smyth, \emph{Differential geometry of complex hypersurfaces}, Ann. of Math.
  (2) \textbf{85} (1967), 246--266.

\bibitem[Tak78]{Takeuchi:hkscps}
M.~Takeuchi, \emph{Homogeneous {K}\"ahler submanifolds in complex projective
  spaces}, Japan. J. Math. (N.S.) \textbf{4} (1978), 171--219.

\bibitem[Tsu86]{Tsukada:EKsccsf}
K.~Tsukada, \emph{Einstein {K}\"ahler submanifolds with codimension {$2$} in a
  complex space form}, Math. Ann. \textbf{274} (1986), 503--516.

\bibitem[VK85]{VK:tfvrs}
V.~E. Voskresenskij and A.~A. Klyachko, \emph{Toroidal {Fano} varieties and
  root systems}, Math. USSR, Izv. \textbf{24} (1985), 221--244.

\bibitem[WZ04]{WZ:krstmpfcc}
X.-J. Wang and X.~Zhu, \emph{K\"ahler-{R}icci solitons on toric manifolds with
  positive first {C}hern class}, Adv. Math. \textbf{188} (2004), 87--103.

\end{thebibliography}
\end{document}